\newcommand{\Proj}{{\rm Proj}}
\newcommand{\Spec}{{\rm Spec}}
\newcommand{\nc}{\newcommand}
\nc{\bla}{\phantom{bbbbb}}
\newcommand{\Sym}{ \,{\rm Sym} \,}
\newcommand{\beq}{\begin{equation}}
\newcommand{\eeq}{\end{equation}}
\newcommand{\barr}{\begin{array}}
\newcommand{\earr}{\end{array}}
\newcommand{\beqar}{\begin{eqnarray}}
\newcommand{\eeqar}{\end{eqnarray}}
\newtheorem{theorem}{Theorem}[section]
\newtheorem{corollary}[theorem]{Corollary}
\newtheorem{lemma}[theorem]{Lemma}
\newtheorem{prop}[theorem]{Proposition}
\newtheorem{definition}[theorem]{Definition}
\newtheorem{remark}[theorem]{Remark}
\newtheorem{exit}[theorem]{Example}
\newenvironment{rem}{\begin{remark}\rm}{\end{remark}}
\newenvironment{ex}{\begin{exit}\rm}{\end{exit}}
\newenvironment{defn}{\begin{definition}\rm}{\end{definition}}
\newcommand{\CC}{{\mathbb C }}
\nc{\FF}{ {\mathbb F} }
\nc{\HH}{ {\mathbb H} }
\newcommand{\PP}{ {\mathbb P } }
\newcommand{\UU}{{\mathbb U }}
\newcommand{\GG}{{\mathbb G }}
\newcommand{\curv}{\mathcal{C}}
\newcommand{\cale}{\mathcal{E}}
\newcommand{\calo}{\mathcal{O}}
\newcommand{\lie}[1]{\operatorname{\mathfrak{#1}}}
\newcommand{\kf}{\lie k}
\newcommand{\tf}{\lie t}
\newcommand{\impl}{\textup{impl}}
\newcommand{\symp}{{ /\!/ }} 
\DeclareMathOperator{\Stab}{Stab}
\nc{\conv}{{\rm Conv}}
\nc{\umax}{{U_{\max}}}
\newcommand{\weight}{\omega}
\newcommand{\weightlie}{\hat{\omega}}
\newcommand{\reg}{\mathrm{reg}}
\newcommand{\Lie}{\mathrm{Lie}}
\newcommand{\symk}{\sym^{\mathbf{\weight}}\CC^n}
\newcommand{\wsymk}[1]{\wedge^{#1}(\symk)}
\newcommand{\grass}{\mathrm{Grass}}
\newcommand{\bbg}{\mathbb{G}}
\newcommand{\bk}{\mathbf{\omega}}
\newcommand{\bi}{\mathbf{i}}
\newcommand{\bv}{\mathbf{v}}
\newcommand{\zdis}{\mathfrak{p}}
\newcommand{\ba}{\mathbf{\alpha}}
\newcommand{\bars}{\overline{s}}
\newcommand{\barss}{\overline{ss}}
\newcommand{\jetreg}[2]{J_{n}^{\mathrm{reg}}({#1},{#2})}
\newcommand{\lieu}{{\mathfrak u}}
\newcommand{\hU}{\hat{U}}
\newcommand{\xg}{X/\!/G}
\newcommand{\GL}{\mathrm{GL}}
\newcommand{\SL}{\mathrm{SL}}
\newcommand{\SU}{\mathrm{SU}}
\newcommand{\sym}{\mathrm{Sym}}
\newcommand{\symkd}{\sym^{\mathbf{\weight}}_{\Delta}\CC^n}
\nc{\lieq}{{\mathfrak q}}
\nc{\liez}{{\mathfrak z}}
\nc{\lieqs}{{\lieq}^*}
\nc{\lieg}{{\mathfrak g}}
\nc{\liegs}{{\lieg}^*}
\nc{\liep}{{\mathfrak p}}
\nc{\lieps}{{\liep}^*}
\def\a{\alpha}
\def\b{\beta}
\def\s{\sigma}
\title{Graded unipotent groups and Grosshans theory}
\author{Gergely B\'erczi and Frances Kirwan}
\address{University of Oxford, Mathematical Institute, Andrew Wiles Building, Oxford OX2 6GG, UK}
\thanks{Early work on this project was supported by the Engineering and Physical Sciences 
Research Council [grant numbers   GR/T016170/1,EP/G000174/1].}
\begin{document}

\begin{abstract}
Let $U$ be a unipotent group which is graded in the sense that it has an extension $H$ by the multiplicative group of the complex numbers such that all the weights of the adjoint action on the Lie algebra of $U$ are strictly positive. We study embeddings of $H$ in a general linear group $G$ which possess Grosshans-like properties. More precisely, suppose $H$ acts on a projective variety $X$ and its action extends to an action of $G$ which is linear with respect to an ample line bundle on $X$. Then, provided that we are willing to twist the linearisation of the action of $H$ by a suitable (rational) character of $H$, we find that the $H$-invariants form a finitely generated algebra and hence define a projective variety $X/\!/H$; moreover the natural morphism from the semistable locus in $X$ to $X/\!/H$ is surjective, and semistable points in $X$ are identified in $X/\!/H$ if and only if the closures of their $H$-orbits meet in the semistable locus. A similar result applies when we replace $X$ by its product with the projective line; this gives us a projective completion of a geometric quotient of a $U$-invariant open subset of $X$ by the action of the unipotent group $U$.
\end{abstract}

\maketitle

\section{Introduction}
Quotients of complex projective or affine varieties by linear actions of complex
reductive groups can be constructed and studied using
Mumford's geometric invariant theory (GIT)
 \cite{Dolg,GIT,New}.  Given a linear action on a complex projective
variety $X$ of a linear algebraic group $G$ which is {\em not}
reductive, the graded algebra of invariants is not necessarily finitely
generated, and even if it is finitely generated, so that there is a
GIT quotient $\xg$ given by the associated projective variety, the
geometry of this GIT quotient is hard to describe. When $G$ is
reductive then $\xg$ is the image of a surjective morphism
$\phi:X^{ss} \to \xg$ from an open subset $X^{ss}$ of $X$ (consisting
of the semistable points for the linear action), and $\phi(x) = \phi(y)$ if and
only if the closures of the $G$-orbits of $x$ and $y$ meet in $X^{ss}$.
When $G$ is not reductive $\phi:X^{ss} \to \xg$ can still be defined
in a natural way but it is not in general surjective, and indeed its image
is not in general an algebraic variety, even when the algebra
of invariants is finitely generated
\cite{DK}. 


One situation in which the algebra of invariants for a non-reductive linear algebraic group action on a projective variety $X$ with respect to an ample line bundle $L$ is guaranteed to be finitely generated is when the group $H$ is a Grosshans subgroup of a reductive group $G$ and the linear action of $H$ extends to $G$. Recall that a closed subgroup $H$ of $G$ is a Grosshans subgroup 
 \cite{Grosshans3,Grosshans}  if and only if the algebra of invariants $\mathcal{O}(G)^H$ is finitely generated and 
 $H$ is an observable subgroup of  $G$ in the sense that 
$$H = \{ g \in G : f(gx)=f(x) \mbox{ for all $x \in X$ and } f \in \mathcal{O}(G)^H \}$$
 (see $\S$3). 
 In this case $G/H$ is quasi-affine, and the finite generation of $\mathcal{O}(G)^H$ is equivalent to the existence of a finite-dimensional $G$-module $V$ and some $v \in V$ such that $H=G_v$ is the stabiliser of $v$ and $\dim(\overline{G\cdot v}\setminus G\cdot v)\le \dim(G\cdot v)-2$.  
Then we find that the non-reductive GIT quotient $X/\!/H$ (the projective variety associated to the algebra of invariants for the linear action of $H$) is given by the classical GIT quotient
$$ X /\!/ H = (X \times \Spec(\mathcal{O}(G)^H))/\!/G$$
of $X \times \Spec(\mathcal{O}(G)^H)$ by the diagonal action of $G$. Here $ \Spec(\mathcal{O}(G)^H) \cong \overline{G\cdot v}$ is the canonical affine completion of the quasi-affine variety $G/H$. The fact that the embedding $$ G/H \to G/\!/H = \Spec(\mathcal{O}(G)^H) $$
is not in general an isomorphism means that the natural map $X^{ss} \to X/\!/H$ is not in general surjective and $X/\!/H$ cannot be described as $X^{ss}$ modulo an equivalence relation, in contrast to classical GIT.
 
 When $H$ is a unipotent subgroup of a reductive group $G$ then $G/H$ is quasi-affine. In general if $H$ is a closed subgroup of a reductive group $G$ then $G/H$ is quasi-projective but not necessarily quasi-affine, so that  Grosshans theory does not apply directly. In this paper we will study families of subgroups $H$ of reductive groups $G$, where $H$ is neither reductive nor unipotent, which possess a property related to the Grosshans property. That is, given any action of $H$ on a projective variety $X$ extending to an action of $G$ which is linear with respect to an ample line bundle on $X$, then {\it provided} that we are willing to twist the linearisation of the action of $H$ by a suitable (rational) character of $H$ we find that the $H$-invariants form a finitely generated algebra; moreover the natural morphism $\phi: X^{ss} \to X/\!/H$ is surjective and satisfies $\phi(x) = \phi(y)$ if and
only if the closures of the $H$-orbits of $x$ and $y$ meet in $X^{ss}$. This property is weaker than the Grosshans property in that we are only guaranteed finitely generated invariants when we twist the linearisation by suitable rational characters of $H$ (though of course unipotent groups have no non-trivial characters). However it is stronger in the sense that we obtain a surjective morphism from $X^{ss}$ to $X/\!/H$ and a geometric description of $X/\!/H$ as $X^{ss}$ modulo the equivalence relation given by $x \sim y$ if and only if the closures of the $H$-orbits of $x$ and $y$ meet in $X^{ss}$.


Our first motivation for this investigation was 
the reparametrisation group $\GG_n$ consisting of $n$-jets of
germs of biholomorphisms of $(\CC,0)$, which acts on the jet bundle $J_n(Y)$ over
a complex manifold $Y$ (for some positive integer $n$). The fibre of $J_n(Y)$ over $x\in Y$ is the space of $n$-jets of germs at the origin of 
holomorphic curves $f:(\CC,0) \to (Y,x)$, and polynomial functions on $J_n(Y)$ are algebraic differential operators $Q(f',\ldots,f^{(n)})$, called jet differentials. The reparametrisation group $\GG_n$ acts fibrewise on the bundle $E_n(Y)$ of jet differentials. This action has  played a central role in the history of hyperbolic varieties and the Kobayashi conjecture on the non-existence of holomorphic curves in compact complex manifolds of generic type (see for example  \cite{demailly,dmr,Merker1,Merker2}).

The reparametrisation group $\GG_n$ is the semi-direct product $\UU_n \rtimes \CC^*$ of its unipotent
radical $\UU_n$ with $\CC^*$. It is a subgroup of $\GL(n)$ with the 
upper triangular form  
$$
\GG_n \cong \left\{ 
\left(\begin{array}{ccccc}
\a_1 & \a_2 & \a_3 & \cdots  & \a_n \\
0        & \a_1^2 &  \cdots &  &  \\
0        & 0       & \a_1^3  & \cdots &  \\
\cdot    & \cdot   & \cdot    & \cdot &  \cdot \\
0 & 0 & 0 & \cdots  & \a_1^n 
\end{array} \right) : \a_1 \in \CC^*, \a_2,\ldots,\a_n \in \CC \right\}
$$
where the entries above the leading diagonal are polynomials in $\a_1, \ldots, \a_n$, and $\UU_n$ is the subgroup consisting of matrices of this form with $\a_1=1$.
Notice that if $n$ is odd then the embedding of $\GG_n$ in $\GL(n)$ can be modified by multiplying a matrix in $\GG_n$ with first row $(\a_1, \ldots, \a_n)$ by the scalar $(\a_1)^{-(n+1)/2}$. The  image of this modified embedding is a subgroup $\tilde{\GG}_n$ of $\SL(n)$; if $n$ is even then the corresponding subgroup $\tilde{\GG}_n$ of $\SL(n)$ is a double cover of ${\GG}_n$.

This paper studies more generally actions of groups $U$, $\hat{U}$ and $\tilde{U}$ of a similar  form to those of $\UU_n$, $\GG_n$ and $\tilde{\GG}_n$. Let $U$ be a unipotent subgroup of $\SL(n)$ with a semi-direct product
$$\hat{U} = U \rtimes \CC^*$$ where $\CC^*$ acts on the Lie algebra of
$U$ with all its weights strictly positive; we call such groups graded unipotent groups. We assume that $U$ and $\hat{U}$ are 
upper triangular subgroups of $\GL(n)$ which are \lq generated along the first row' in the
sense that there are integers $1 = \weight_1 < \weight_2 \leq \weight_3 \leq \cdots \leq \weight_n$
and polynomials $\zdis_{i,j}(\alpha_1,\ldots,\alpha_n)$ in $\alpha_1,\ldots,\alpha_n$
with complex coefficients for $1<i<j \leq n$ such that 
\begin{equation} \label{label1}
\hU=\left\{\left(\begin{array}{ccccc}\a_1 & \a_2 & \a_3 & \ldots & \a_n \\ 0 & \a_1^{\weight_2} & p_{2,3}(\ba) & \ldots & p_{2,n}(\ba) \\ 0 & 0 & \a_1^{\weight_3} & \ldots & p_{3,n}(\ba) \\ \cdot & \cdot & \cdot & \cdot &\cdot \\ 0 & 0 & 0 & 0 & \a_1^{\weight_n}  \end{array}\right)
: \ba =(\a_1,\ldots, \a_n) \in \CC^* \times \CC^{n-1} \right\}
\end{equation}
and $U$ is the unipotent radical of $\hU$; that is, $U$ is the subgroup of $\hU$ where $\alpha_1 = 1$.

Now we consider the subgroup $\tilde{U}$ of $\SL(n)$ which is the intersection of $\SL(n)$ with the product $\hat{U} Z(\GL(n))$ of $\hat{H}$ with the central one-parameter subgroup $Z(\GL(n)) \cong \CC^*$ of $\GL(n)$. Like $\hU$, the subgroup $\tilde{U}$ of $\GL(n)$ is a semi-direct product
$$\tilde{U} = U \rtimes \CC^*$$
where $\CC^*$ acts on the Lie algebra of $U$ with all weights strictly positive.

Let $\tilde{U}=U \rtimes \CC^* \subseteq \SL(n)$ act linearly on a projective variety $X$ with respect to an ample line bundle $L$ on $X$ and assume that the action extends to a linear action of $\SL(n)$. 
 Let $\chi: \tilde{U} \to \CC^*$ be a character of $\tilde{U}$ with kernel containing $U$; we will identify $\chi $ with the integer $r_{\chi}$ such that
$$ \chi \left(\begin{array}{ccccc}t^{n\weight_1 - (\weight_1 + \cdots + \weight_n)} & 0 &0 & \ldots & 0 \\ 0 & t^{n\weight_2 - (\weight_1 + \cdots + \weight_n)}& 0 & \ldots & 0\\ 0 & 0 & t^{n\weight_3 - (\weight_1 + \cdots + \weight_n)}& \ldots & 0 \\ \cdot & \cdot & \cdot & \cdot &\cdot \\ 0 & 0 & 0 & 0 & t^{n\weight_n - (\weight_1 + \cdots + \weight_n)} \end{array}\right) = t^{r_{\chi}}.$$
Assume that the maximum
\[\max \{\weight_1+\ldots +\weight_n- \weight_{i+1}+\weight_i: \weight_{i-1}<\weight_i,1\le i \le n-1 \}\] 
is taken at $i=i_0$. Let $c$ be a positive integer such that 
$$\weight_1+\ldots +\weight_n- \weight_{i_0+1}+\weight_{i_0}-n<\frac{\chi}{c(\weight_1 + \cdots + \weight_n)}<\weight_1 + \cdots + \weight_n - n;$$ we call rational characters $\chi/c$  with this property {\it well-adapted} to the linear action.
 The linearisation of the action of $\tilde{U}$ on $X$ with respect to the ample line bundle $L^{\otimes c}$ can be twisted by the character $\chi$; 
let $L_\chi^{\otimes c}$ denote this twisted linearisation. 

The main theorem of this paper is 

\begin{theorem} \label{maina} 
Let $\hU = U \rtimes \CC^*$ be a subgroup of $\GL(n)$ which is generated along its first row with positive weights $1=\weight_1 < \weight_2 \leq \ldots \leq \weight_n$ as 
at (\ref{label1}) above, and let  $\tilde{U} = U \rtimes \CC^*$ be the intersection of $\SL(n)$ with the product $\hat{U} Z(\GL(n))$.
 Suppose that $\tilde{U}$ acts linearly on a projective variety $X$ with respect to an ample line bundle $L$ and the action extends to a linear action of $\SL(n)$. Then the algebra of invariants $\oplus_{m=0}^\infty H^0(X,L_{\chi}^{\otimes cm})^{\tilde{U}}$ is 
finitely generated for any well-adapted rational character $\chi/c$ of $\tilde{U}$.   

In addition the projective variety $X/\!/ \tilde{U}$ associated to this algebra of invariants is a categorical quotient of an open subset $X^{ss,\tilde{U}}$ of $X$ by $\tilde{U}$ and contains as an open subset a geometric quotient of an open subset $X^{s,\tilde{U}}$ of $X$.  
\end{theorem}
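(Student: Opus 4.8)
The plan is to reduce the quotient by the graded unipotent group $\tilde U = U \rtimes \CC^*$ to a combination of a reductive $\CC^*$-quotient and a unipotent $U$-quotient, using the well-adapted character to drive the problem into the Grosshans regime recalled in the introduction. Write $R = \bigoplus_{m\ge 0} H^0(X, L^m)$ for the homogeneous coordinate ring, on which $\tilde U$ and $\SL(n)$ act, and let $\lambda$ denote the grading one-parameter subgroup $\CC^* \subseteq \tilde U$. First I would make the well-adapted inequalities explicit: with $N = \weight_1 + \cdots + \weight_n$, the $\lambda$-weight on the $i$th coordinate is $n\weight_i - N$, so the minimal weight occurs at $i=1$ and equals $n - N < 0$. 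Twisting by $\chi$ shifts these weights, and the two-sided bound defining well-adaptedness is precisely the condition that $0$ then lies strictly inside the first weight gap of the twisted action: the right-hand inequality keeps the twisted minimal weight negative (so that semistable points exist and $Z_{\min}$ is the attracting stratum), while the left-hand inequality keeps $0$ strictly below the next weight reachable under the $U$-action, the admissible interval having width equal to the distinguished gap $\weight_{i_0+1}-\weight_{i_0}$. This is exactly the ``semistability coincides with stability for $\lambda$'' window.

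Next I would analyse the local geometry near the minimal stratum. Let $Z_{\min} \subseteq X$ be the union of components of $X^\lambda$ on which $\lambda$ acts with minimal weight, and let $X^0_{\min} = \{x \in X : \lim_{t\to 0}\lambda(t)\cdot x \in Z_{\min}\}$ be its attracting set. Because all weights of $\lambda$ on $\Lie(U)$ are strictly positive, conjugation by $\lambda(t)$ contracts $U$ to the identity as $t\to 0$, so the retraction $p : X^0_{\min}\to Z_{\min}$, $x \mapsto \lim_{t\to 0}\lambda(t)\cdot x$, is $U$-invariant; in particular $X^0_{\min}$ is $\tilde U$-invariant and $U$ acts along the fibres of $p$. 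Using the well-adapted condition from Step 1 I would show $X^{ss,\tilde U}\subseteq X^0_{\min}$ and, more precisely, that $X^{ss,\tilde U} = \tilde U\cdot (X^0_{\min}\cap X^{ss,\tilde U})$, so the whole semistable locus is swept out by $\tilde U$ from a neighbourhood of $Z_{\min}$ whose structure is governed by the reductive centraliser of $\lambda$ acting on $Z_{\min}$.

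The crux, and the step I expect to be the \emph{main obstacle}, is finite generation. Decompose the invariants as $\bigoplus_m H^0(X, L^{cm}_\chi)^{\tilde U} = \big((R_\chi)^U\big)^{\lambda}$: first take $U$-invariants, then the $\lambda$-invariant (i.e.\ $\chi$-isotypic) part. Since $R^U$ need not be finitely generated for a general unipotent group, everything hinges on localising to $X^0_{\min}$ — equivalently on inverting an invariant section cutting out the complement of the attracting set — and then exhibiting $U$, in its explicit ``generated along the first row'' upper-triangular form, as a Grosshans subgroup of a reductive group acting on the relevant affine cone. Concretely I would realise $X/\!/\tilde U$ as a classical reductive GIT quotient of the shape $(X \times \overline{G\cdot v})/\!/G$ described in the introduction, the codimension-$\ge 2$ Grosshans condition being exactly what the well-adapted twist secures; finite generation of $R^U$ on the localised cone then follows, and the residual reductive $\lambda$-action cuts out the $\chi$-isotypic piece as a finitely generated subalgebra. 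Verifying that one is genuinely in the quasi-affine, codimension-two Grosshans situation, and that passing to the $\chi$-isotypic part preserves finite generation, is the delicate heart of the argument.

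With finite generation in hand, $X/\!/\tilde U := \Proj$ of the invariant algebra is a projective variety and the inclusion of invariants induces a natural morphism $\phi : X^{ss,\tilde U}\to X/\!/\tilde U$. The sweep description from Step 2 shows $\phi$ is surjective and that $\phi(x)=\phi(y)$ if and only if $\overline{\tilde U\cdot x}\cap\overline{\tilde U\cdot y}\cap X^{ss,\tilde U}\neq\emptyset$, so $\phi$ is a categorical quotient identifying semistable points with meeting orbit closures. Finally, on the stable locus $X^{s,\tilde U}$, where $\tilde U$-orbits are closed in $X^{ss,\tilde U}$ with finite stabilisers, I would deduce by the standard separation argument that $\phi$ restricts to a geometric quotient onto an open subset of $X/\!/\tilde U$, completing the proof.
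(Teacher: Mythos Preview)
Your strategy has a genuine gap at precisely the point you flag as ``the delicate heart of the argument'': you propose to land in the Grosshans codimension-$\ge 2$ situation and then invoke the description of $X/\!/H$ as $(X\times\overline{G\cdot v})/\!/G$. But the paper shows explicitly that this fails. In \S4.1 the $I_{2,2}$ example exhibits a codimension-one boundary component in $\overline{\GL(4)/\GG_4}$ inside the Grassmannian, and the introduction to \S5 says outright that ``the Grosshans principle is not applicable in this situation''; the authors also note that an earlier attempt to prove $\UU_n$ is Grosshans in $\SL(n)$ had a gap. A character twist does not change the codimension of a boundary divisor, so ``the codimension-$\ge 2$ Grosshans condition being exactly what the well-adapted twist secures'' is simply not true.

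What the paper does instead is replace the codimension condition by an \emph{instability} condition. The general criterion is Theorem~\ref{thm:geomcor}: if one can find a $G\times R$-equivariant projective completion $\overline{G\times_{U_H}X}$ with an ample linearisation for which the entire boundary is \emph{unstable}, then the invariants are finitely generated and $X/\!/H$ is the reductive quotient $\overline{G\times_{U_H}X}/\!/(G\times R)$, with the desired categorical/geometric quotient properties. The work of the paper is to produce such a completion concretely: \S3 builds an explicit $\GL(n)$-equivariant embedding $\GL(n)/\hat U\hookrightarrow\grass_n(\symk)\subseteq\PP(\wedge^n\symk)$ (Theorem~\ref{embed}), and Proposition~\ref{boundary}/Corollary~\ref{corbound} show its boundary lies in $\PP(\mathcal W_{v_1})\cup\PP(\mathcal W_{\det})$. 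Then two separate Hilbert--Mumford arguments destabilise each piece of $(\PP(\mathcal W_{v_1})\cup\PP(\mathcal W_{\det}))\times X$: a left $\SL(n)$ one-parameter subgroup handles $\PP(\mathcal W_{\det})\times X$ (Lemma~\ref{lempdet}), and the right $\CC^*\le\tilde U$ action, twisted by the well-adapted $\chi/p$, handles $\PP(\mathcal W_{v_1})\times X$. The well-adapted inequalities are calibrated to the weights of this right $\CC^*$ on the summands of $\wedge^n\symk$, not to weights on $X$ itself; your BB-stratification analysis of $Z_{\min}\subseteq X$ and $X^0_{\min}$ plays no role here, and the finite-generation and quotient statements then drop out of Theorem~\ref{thm:geomcor} applied to $\overline{\GL(n)/\hat U}\times X$ (Proposition~\ref{propbound}, Theorem~\ref{thmbound}).
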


Applying a similar argument after replacing $X$ with $X \times \PP^1$ we can obtain geometric information on the action of the unipotent group $U$ on $X$: 

\begin{theorem} \label{cor:invariants}
In the situation above let $\tilde{U}$ act diagonally on $X \times \PP^1$ 
and linearise this action using the tensor product of $L_\chi$ with $\calo_{\PP^1}(M)$ for suitable $M \geq 1$. Then $(X \times \PP^1)/\!/\tilde{U}$ is a projective variety which is a categorical quotient by $\tilde{U}$ of a $\tilde{U}$-invariant open subset of $X \times \CC$ and contains as an open subset a geometric quotient of a $U$-invariant open subset $X^{\hat{s},U}$ of $X$ by $U$.
\end{theorem}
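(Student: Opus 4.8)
The plan is to deduce Theorem~\ref{cor:invariants} from Theorem~\ref{maina} by applying the latter to the product $X \times \PP^1$ in place of $X$. First I would observe that $\SL(n)$ acts on $X \times \PP^1$ via its given action on $X$ and trivially on $\PP^1$, so that the diagonal $\tilde{U}$-action also extends to $\SL(n)$, and the line bundle $L_\chi \boxtimes \calo_{\PP^1}(M)$ is ample on $X \times \PP^1$ whenever $M \geq 1$. Since the well-adaptedness condition on $\chi/c$ depends only on the weights $\weight_1,\ldots,\weight_n$ of the grading and not on the variety, the same twisting character remains well-adapted for the product. Theorem~\ref{maina} then applies verbatim to give finite generation of the algebra of invariants and the quotient $(X \times \PP^1)/\!/\tilde{U}$ as a categorical quotient of an open semistable locus $(X\times\PP^1)^{ss,\tilde{U}}$, with a geometric quotient on a smaller stable locus.

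The key step is to identify the geometric content of this quotient for the \emph{unipotent} group $U$. Here I would exploit the structure $\tilde{U} = U \rtimes \CC^*$ together with the extra $\PP^1$ factor on which $\CC^*$ acts. The idea is that enlarging $X$ by $\PP^1$ allows the $\CC^*$-factor of $\tilde{U}$ to be used to rigidify the grading direction, so that taking the $\tilde{U}$-quotient effectively performs a two-stage quotient: first the semidirect $\CC^*$ acts on the $\calo_{\PP^1}(M)$ direction, and quotienting by it cuts down to a slice where only the genuinely unipotent action of $U$ remains. Concretely I would analyse the image of $X \times \CC \subseteq X \times \PP^1$ (the complement of a $\CC^*$-fixed point or divisor at infinity) and show that on a suitable $\tilde{U}$-invariant open subset the $\CC^*$-action is free in the $\PP^1$-direction, so that the $\tilde{U}$-quotient of $X \times \CC$ restricts to a $U$-quotient of a $U$-invariant open subset $X^{\hat{s},U}$ of $X$.

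The main obstacle I expect is controlling precisely which open subset of $X$ descends to a genuine \emph{geometric} quotient by $U$, i.e. pinning down $X^{\hat{s},U}$ and showing that the stable locus of the $\tilde{U}$-linearisation on $X \times \PP^1$ intersects the slice $X \times \{pt\}$ in exactly this $U$-invariant set with closed $U$-orbits. This requires understanding how the stability conditions coming from the $\CC^*$-weight on $\calo_{\PP^1}(M)$ interact with the choice of well-adapted $\chi/c$; the role of $M \geq 1$ is precisely to ensure that the $\CC^*$-moment contribution from the $\PP^1$-factor places the relevant points in the stable rather than merely semistable range. I would verify that $M$ can be chosen large enough (depending on $\chi/c$ and the weights of $L$) that points of $X \times \CC$ lying over $U$-stable points of $X$ are $\tilde{U}$-stable, and conversely. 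Once this compatibility is established, the identification of $(X\times\PP^1)/\!/\tilde{U}$ as a projective completion of the geometric $U$-quotient follows from the categorical/geometric quotient statements already supplied by Theorem~\ref{maina}.
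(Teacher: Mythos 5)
Your second and third paragraphs do capture the strategy the paper actually uses: destabilise $X\times\{\infty\}$ by taking $M$ large, and exploit the fact that the residual $\CC^*\leq\tilde{U}$ acts on the $\PP^1$ factor through a character with kernel $U$, so that on the affine part the $\tilde{U}$-quotient of $X\times\CC$ collapses to a $U$-quotient of a slice of $X$. However, your first paragraph contains a genuine error that breaks the proposed reduction to Theorem \ref{maina}. If $\SL(n)$ acts trivially on the $\PP^1$ factor and the $\tilde{U}$-action on $X\times\PP^1$ is obtained by restriction, then $\tilde{U}$ acts trivially on $\PP^1$, the construction is vacuous ($(X\times\PP^1)/\!/\tilde{U}\cong(X/\!/\tilde{U})\times\PP^1$), and no geometric quotient by $U$ can be extracted. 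The construction the theorem intends, and which your later paragraphs tacitly assume, is that $\tilde{U}$ acts on $\PP^1$ by $\tilde{u}[x_0:x_1]=[\chi_0(\tilde{u})x_0:x_1]$ for a character $\chi_0$ with kernel $U$; this action does \emph{not} extend to $\SL(n)$, which has no non-trivial characters, so Theorem \ref{maina} cannot be applied verbatim to $X\times\PP^1$. Your first paragraph is thus internally inconsistent with the rest of the plan.

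The paper's remedy is to re-run the \emph{proof} of Theorem \ref{maina} rather than quote its statement: the reductive group is $\SL(n)\times\CC^*$ acting on $\PP(\wsymk{n})\times X\times\PP^1$ with the linearisation $\mathcal{O}(p)\otimes\mathcal{O}(q)\otimes\mathcal{O}_{\PP^1}(r)$ twisted by a well-adapted $\chi/p$, the extra $\CC^*$ factor accounting simultaneously for the character twist and for the action on $\PP^1$. Besides the well-adaptedness inequality (\ref{welladapted}) one needs the additional condition $r>-q\eta_{\min}$ (i.e.\ $M$ sufficiently large) to force $\overline{\GL(n)/\hU}\times X\times\{\infty\}$ to be unstable (Proposition \ref{propbound2}); the conclusions then follow as in Theorem \ref{thmbound2}, with $X^{\hat{s},U}$ defined as the intersection of the stable locus with the slice $\{[\zdis_n]\}\times X\times\{[1:1]\}$. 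Your outline becomes correct once the first step is replaced by this argument (or by an explicit extension of Theorem \ref{maina} to the group $\SL(n)\times\CC^*$), and once you note that well-adaptedness alone is not preserved "for free": the extra lower bound on $M$ is a genuinely new constraint coming from the $\PP^1$ factor.
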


\begin{rem}\label{cor:invariants2}
This theorem's proof also shows that the algebra  $\oplus_{m=0}^\infty H^0(X\times \PP^1,L_{\chi}^{\otimes cm} \otimes \mathcal{O}_{\PP^1}(M))^{\tilde{U}}$ of $\tilde{U}$-invariants is 
finitely generated for a well-adapted rational character $\chi/c$ of $\tilde{U}$ when $c$ is a sufficiently divisible positive integer. This graded algebra can be identified with the subalgebra of the algebra of $U$-invariants $\oplus_{m=0}^\infty H^0(X,L^{\otimes cm})^{{U}}$  generated by those 
weight vectors  for the action of $\CC^* \leq \tilde{U}$ on
 $\oplus_{m=0}^\infty H^0(X,L^{\otimes cm})^{{U}}$ 
 with non-positive weights after twisting by the well-adapted character $\chi$.
\end{rem}

Note that if $U$ is {\it any} unipotent complex linear algebraic group which has
an action of $\CC^*$ with all weights strictly positive (that is, $U$ is graded unipotent), then $U$ can be embedded in $\GL({\rm Lie}(U \rtimes \CC^*))$ via its adjoint action on the Lie algebra
${\rm Lie}({U}\rtimes \CC^*)$ as the unipotent radical of a subgroup $\hU$ of this form 
 which is generated along the first row, and as the unipotent radical of the associated subgroup $\tilde{U}$ of $\SL(n)$. We will call this the adjoint form of $U$.  
However there are many examples (including the reparametrisation groups for jet 
differentials) of subgroups of $\GL(n)$ of the form (\ref{label1}) where the action of $U$ is
not equivalent to its adjoint action on ${\rm Lie}\hat{U}$. Theorem \ref{maina} gives us the following result for the adjoint form of a graded unipotent group:

\begin{corollary} 
Let $U$ be any unipotent complex linear algebraic group with an action of $\CC^*$ with strictly positive weights and associated $\CC^*$ extension $\hU$ and adjoint embedding in $\GL({\rm Lie}(\hU))$ . If $U$ acts linearly on a projective variety $X$ and the action extends to a linear action of $\GL(\mathrm{Lie}\hU)$ then the conclusions of Theorem \ref{cor:invariants} hold.  \end{corollary}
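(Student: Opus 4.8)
The plan is to reduce the corollary directly to Theorem \ref{cor:invariants} by realising the adjoint form of $U$ concretely as a subgroup of $\GL(\Lie\hU)$ of the shape (\ref{label1}). Write $\Lie\hU = \Lie U \oplus \CC\delta$, where $\delta$ spans the Lie algebra of the grading $\CC^*$, and fix a basis of $\Lie U$ consisting of weight vectors for the $\CC^*$-action; since every character of $\CC^*$ is of the form $t \mapsto t^a$, the relevant weights $a_1 \le \cdots \le a_m$ are positive integers. Under $\mathrm{Ad}$ the element $\lambda(t)$ acts diagonally, scaling the weight vectors by $t^{a_i}$ and fixing $\delta$, while each $\mathrm{Ad}(u)$ with $u \in U$ is unipotent and preserves the ideal $\Lie U$ together with its weight filtration. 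The first step is to order this weight basis so that the matrices of $\mathrm{Ad}(\hU)$ become upper triangular, and then to twist the embedding by the character $u\lambda(t) \mapsto t^{c}$ for a suitable positive integer $c$, chosen so that the smallest diagonal weight is normalised to $1$. Because the $a_i$ are positive integers, this yields positive integer weights $1 = \weight_1 < \weight_2 \le \cdots \le \weight_n$ with $n = \dim\Lie\hU$. One also checks that the embedding is faithful: any $\xi \in \Lie U$ with $\mathrm{ad}_\xi = 0$ on all of $\Lie\hU$ must in particular satisfy $[\delta,\xi]=(\mathrm{wt}\,\xi)\xi = 0$, forcing $\xi = 0$ since all weights are strictly positive, so the kernel of $\mathrm{Ad}$ on $\Lie U$ is trivial.

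The heart of the matter, and the step I expect to be the main obstacle, is verifying that this adjoint embedding is \emph{generated along its first row} in the precise sense required by (\ref{label1}): that the $n-1$ parameters of the unipotent radical appear as free coordinates in the off-diagonal entries of the first row, and that every remaining entry above the diagonal is then a polynomial in these first-row parameters. This is exactly the assertion made in the paragraph preceding the statement, and it amounts to showing that the map sending $\hat u \in \hU$ to its first row is an isomorphism of varieties onto $\CC^* \times \CC^{n-1}$, with the rest of $\mathrm{Ad}(\hat u)$ recovered polynomially; the natural bijectivity here comes from the triangularity of $\mathrm{Ad}(u)$ together with the faithfulness established above, but the polynomial dependence of the higher rows requires tracking the iterated brackets $\mathrm{ad}_\xi^k$ in the weight-graded Lie algebra. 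Once this structural fact is in hand, the remainder is formal.

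Granting the adjoint form, I would then set $\tilde{U} = \SL(n) \cap \hat U Z(\GL(n))$ with $n = \dim\Lie\hU$, which as in the main body is again a semidirect product $U \rtimes \CC^*$ with the $\CC^*$ acting on $\Lie U$ with strictly positive weights, so the full hypothesis data of Theorem \ref{maina} is present. The given action of $U$ on $X$ extends by assumption to a linear action of $\GL(\Lie\hU) = \GL(n)$ with respect to the ample line bundle $L$; restricting this action to the subgroup $\SL(n) \subseteq \GL(n)$ gives a linear $\SL(n)$-action on $(X,L)$, and in particular the action of $\tilde U \subseteq \SL(n)$ extends to a linear action of $\SL(n)$ with respect to the ample line bundle $L$. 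This places us exactly in the situation of Theorem \ref{cor:invariants}, whose conclusions --- finite generation of the twisted invariants, the projective quotient $(X\times\PP^1)\symp\tilde U$ as a categorical quotient, and a geometric quotient of a $U$-invariant open subset $X^{\hat s,U}$ by $U$ --- then apply verbatim, yielding the corollary.
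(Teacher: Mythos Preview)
Your approach is essentially the same as the paper's: the corollary is stated without a separate proof, relying on the preceding paragraph's assertion that the adjoint action realises $U$ as the unipotent radical of a subgroup of $\GL(\Lie\hU)$ of the form (\ref{label1}), after which Theorem \ref{cor:invariants} applies directly. You flesh out precisely the details the paper leaves implicit --- faithfulness via $[\delta,\xi]=(\mathrm{wt}\,\xi)\xi$, the ordering of the weight basis, and the character twist needed to achieve $\weight_1=1$ --- and this last point is genuinely present in the paper (cf.\ the remark in \S6 that the adjoint weights on $\lieu$ are $\weight_j - 1$, and in \S3 that $\weightlie_i = \weight_i - 1$), so your twist by $c=1$ is exactly right. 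Your identification of the ``generated along the first row'' verification as the substantive step is accurate; the paper simply asserts it in \S2, and your sketch (bijectivity from the weight-graded structure of $e^{\mathrm{ad}_\xi}\delta$, polynomial inversion by induction on weight) is the correct argument.
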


\begin{rem} 
In the situation when $\hat{U}=\GG_n$ we cannot apply Theorems \ref{maina} and \ref{cor:invariants} directly to the action of $\GG_n$ on the fibre $J_n(Y)_x$ of the jet bundle over $x\in Y$, where $Y$ is a $k$-dimensional complex manifold, as this fibre is not projective. However, $J_n(Y)_x$ can be identified with the set of $n\times k$ matrices with nonzero first column which forms an open subset of the affine space of all $n\times k$ matrices, and we can apply the argument to the associated projective space $\PP^{nk-1}$, on which $\GG_n$ acts linearly with respect to the hyperplane line bundle $L$. When $n \geq 2$ the fibre at $x$ of the bundle $E_n(Y)$ of jet differentials can then be identified with the algebra $\oplus_{m=0}^\infty H^0(\PP^{nk-1},L^{\otimes m})$ and the Demailly algebra $E_n(Y)_x^{\UU_n}$ of $\UU_n$-invariant jet differentials can be identified with its subalgebra $\oplus_{m=0}^\infty H^0(\PP^{nk-1},L^{\otimes m})^{{\UU_n}}$.
%
\label{Demconj} 
\end{rem}

\noindent 

We will use a generalisation of a criterion in \cite{DK} to prove Theorem \ref{maina} 
 as follows.
In  \S\ref{sec:construction} we obtain an explicit $\GL(n)$-equivariant embedding of the quasi-affine variety $\GL(n)/\hat{U}$ (which can also be identified with the quotient
of $\SL(n)/U$ by a finite central subgroup of $\hat{U} \cap \tilde{U}$) 
in the Grassmannian $\grass_n(\symk)$ of $n$-dimensional linear subspaces of
\[\symk=\CC^n \oplus \sym^{\weight_2}(\CC^n) \oplus \ldots \oplus \sym^{\weight_n}(\CC^n)\]
where $\sym^{k}(\CC^n)$ is the $k$th symmetric product of $\CC^n$.
Using Pl\"{u}cker
coordinates we thus obtain an explicit projective embedding $\GL(n)/\hat{U} \hookrightarrow \PP(\wedge^n\symk).$ In fact  $\GL(n)/\hat{U}$ embeds into the open affine subset of $ \PP(\wedge^n\symk)$ where the coordinate corresponding to the one-dimensional summand $\wedge^n\CC^n$ of $\wedge^n\symk$ does not vanish.


The advantage of this embedding lies in the fact that we can control the boundary of the orbit $\GL(n)/\hU$ in $\PP(\wsymk n)$; we will prove that 
\[\overline{(\GL(n)/\hU)} \setminus (\GL(n)/\hU) 
\]
is contained in the union of two subspaces $\PP(\mathcal{W}_{v_1})$ and $\PP(\mathcal{W}_{\det})$ of $\PP(\wedge^n\symk).$
In order to prove  Theorem \ref{maina} we show that if $X$ is
 a nonsingular complex projective variety on which $\SL(n)$ acts linearly with respect to a very ample line bundle $L$, 
and 
if the linear action of $\tilde{U} \leq \SL(n)$ on $X$ is twisted by a well-adapted rational character $\chi/c$, then $\PP(\mathcal{W}_{v_1})\times X$ and $\PP(\mathcal{W}_{\det})\times X$ are unstable with respect to the induced linear action of $\SL(n) \times \CC^*$ on $\PP(\wedge^n\symk) \times X$ for an appropriate linearisation. A similar argument applies when $X$ is replaced with $X \times \PP^1$ and enables us to prove Theorem \ref{cor:invariants}.

The layout of this paper is as follows. $\S$2 provides a very brief review of
classical geometric invariant theory and some non-reductive GIT.
 In $\S$3 we describe our embedding of $\GL(n)/\hat{U}$ into $\grass_n(\symk)$.
In $\S$4 we recall the original motivation for this work from global singularity theory and jet differentials and discuss the link between jet differentials and the curvilinear component of Hilbert schemes of points. 
In   
$\S$5  we complete the proofs of Theorems \ref{maina} and \ref{cor:invariants}. Finally in $\S$6 we consider our results in the cases of jet differentials and the adjoint forms of graded unipotent groups.
\medskip 

\noindent\textbf{Acknowledgments}
The authors thank Brent Doran, Thomas Hawes and and Rich\'ard Rim\'anyi for helpful discussions on this topic.

\section{Classical and non-reductive geometric invariant theory}

Let $X$ be a complex quasi-projective variety on which a complex
reductive group $G$ acts linearly; that is, there is a
line bundle $L$ on $X$ (which we will assume to be ample) and a lift $\mathcal{L}$ of the action of $G$ to $L$.
Then $y \in X$ is said to be {\em
semistable} for this linear action if there exists some $m > 0$
and $f \in H^0(X, L^{\otimes m})^G$ not vanishing at $y$ such that
the open subset
$$ X_f = \{ x \in X \ | \ f(x) \neq 0 \}$$
is affine ($X_f$ is automatically affine if $X$ is projective or affine), and $y$ is {\em stable} if also $f$ can be chosen so that
the action of $G$
on $X_f$ is closed with all stabilisers finite.
The open subset $X^{ss}$ of $X$ consisting of semistable points has
a quasi-projective categorical quotient $X^{ss} \to \xg$, which restricts to a geometric
quotient $X^s \to X^s/G$ of the open subset $X^s$ of stable points (see \cite{GIT} Theorem 1.10).
When
$X$ is projective then $X_f$ is affine for any
nonzero $f \in H^0(X, L^{\otimes
m})^G$ (since we are assuming $L$ to be ample), and there is an induced action of $G$ on the
homogeneous coordinate ring
\[\hat{\calo}_L(X) = \bigoplus_{m \geq 0} H^0(X, L^{\otimes m}) \]
of $X$.
The
subring ${\hat{\calo}}_L(X)^G$ consisting of the elements of ${\hat{\calo}}_L(X)$
left invariant by $G$ is a finitely generated graded complex algebra
because $G$ is reductive, and
 the GIT quotient $X/\!/G$ is the associated projective variety  $\Proj({\hat{\calo}}_L(X)^G)$ \cite{Dolg, GIT, New}. When $X$ is affine
and the linearisation of the action of $G$ is trivial then
the algebra $\mathcal{O}(X)^G$ of $G$-invariant regular functions on $X$ is
finitely generated and $X^{ss} = X$ and $X/\!/G = \Spec(\mathcal{O}(X)^G)$
is the affine variety associated to $\mathcal{O}(X)^G$.

Suppose now that $H$ is any linear algebraic
group, with unipotent radical $U \unlhd H$ (so that
$H/U$ is reductive), acting linearly on a complex projective variety $X$ with respect to an ample line bundle $L$. Then the scheme  $\Proj({\hat{\calo}}_L(X)^H)$ is not in general 
 a projective variety,
since the graded complex algebra of invariants
$${\hat{\calo}}_L(X)^H = \bigoplus_{m \geq 0} H^0(X, L^{\otimes m})^H$$
is not necessarily finitely generated, and geometric invariant theory (GIT)
cannot be extended immediately to this 
situation (cf. \cite{DK,F2,F1,GP1,GP2,KPEN,W}). However in some cases it is known that $\hat{\calo}_L(X)^U$ is finitely generated, which implies that 
\[{\hat{\calo}}_L(X)^H = \left( {\hat{\calo}}_L(X)^U \right)^{H/U}\] 
is finitely generated since $H/U$ is reductive, and then the {\em enveloping quotient} in the sense of \cite{BDHK,DK} is given by
\[X/\!/H=\Proj(\hat{\calo}_L(X)^H).\] 
Moreover, there is a morphism 
\[q: X^{ss} \to X/\!/H,\] 
where $X^{ss}$ is defined as in the reductive case 
above, which restricts to a geometric quotient 
\[q:X^s \to X^s/H\]
for an open subset $X^s \subset X^{ss}$. 
In such cases we have a GIT-like quotient $X/\!/H$ and we would like to understand it geometrically. However there is a crucial difference here from the case of reductive group actions, even though we are assuming that the invariants are finitely generated: the morphism $X^{ss} \to X/\!/H$ is not in general surjective, so we cannot describe $X/\!/H$ geometrically as $X^{ss}$ modulo some equivalence relation. 

In this paper we will study the situation when $U$ is a unipotent group with a one-parameter group of automorphisms
$\lambda:\CC^* \to \mbox{Aut}(U)$ such that the weights of
the induced $\CC^*$ action on the Lie algebra $\lieu$ of
$U$ are all strictly positive.  We will call such a group $U$ a graded unipotent group. In this situation we can form the semidirect
product
$$\hat{U} = \CC^* \ltimes U$$
given by $\CC^* \times U$ with group multiplication
$$(z_1,u_1).(z_2,u_2) = (z_1 z_2, (\lambda(z_2^{-1})(u_1))u_2).$$
Note that the centre
of $\hat{U}$ is finite and meets $U$ in the trivial subgroup, so we have an inclusion
given by the composition
$$ U \hookrightarrow \hat{U} \to \mbox{Aut}(\hat{U})
\hookrightarrow \GL(\mbox{Lie}(\hat{U})) =\GL(\CC \oplus \lieu)$$
where $\hat{U}$ maps to its group of inner automorphisms and $\lieu = \mbox{Lie}(U)$.
Thus we find that $U$ is isomorphic to a closed subgroup of the reductive
group $G=\SL(\CC \oplus \lieu)$ of the form
$$
U=\left\{\left(\begin{array}{ccccc} 1 & \a_2 & \a_3 & \ldots & \a_n \\ 0 & 1 & p_{2,3}(\a_2 \ldots ,\a_n) & \ldots & p_{2,n}(\a_2,\ldots, \a_n) \\ 0 & 0 & 1 & \ldots & p_{3,n}(\a_2,\ldots, \b_n) \\ \cdot & \cdot & \cdot & \cdot &\cdot \\ 0 & 0 & 0 & 0 & 1  \end{array}\right) 
:\a_2, \ldots, \a_n \in \CC\right\}
$$
where $n = 1 + \dim U$ and $p_{i,j}(\alpha_2,\ldots,\alpha_n)$ is a polynomial in $\alpha_2,\ldots,\alpha_n$
with complex coefficients for $1<i<j \leq n$. Our aim is to study
linear actions of subgroups $U$ and $\hat{U}$ of $\GL(n)$ of this form
(but with the embedding in $\GL(n)$ not necesssarily induced by the adjoint
action on the Lie algebra of $\hU$)
which extend to linear actions of $\GL(n)$ itself, by finding  explicit affine and projective 
embeddings of the quasi-affine varieties $\GL(n)/\hat{U}$.

In cases where the action of $\hat{U}$ on $X$ extends to an action of $\GL(n)$ there is an isomorphism of
$\GL(n)$-varieties 
\begin{equation} \label{9Febiso} G \times_U X \cong (G/U) \times X
\end{equation} given by
$ [g,x] \mapsto (gU, gx)$. Here $\GL(n) \times_{\hU} X$ denotes the quotient of $\GL(n) \times X$
by the free action of $\hU$ defined by $u(g,x)=(g u^{-1}, ux)$ for $u \in \hU$,
which is a quasi-projective variety by \cite{PopVin} Theorem 4.19. Then there
is an induced $\GL(n)$-action on $\GL(n) \times_{\hU} X$ given by left
multiplication of $\GL(n)$ on itself.

Geometric invariant theory for linear actions of a unipotent group $U$ on a projective variety was studied in \cite{DK}. If $U$ is a unipotent subgroup of the reductive group $G$ then $U$-invariants on $X$ can be related to $G$-invariants of appropriate projective compactifications $\overline{G \times_U X}$ of the quasi-projective variety $G\times_U X$ where $\overline{G \times_U X}$ has a suitable $G$-linearisation extending the linearisation for the $U$ action on $X$. 


\begin{theorem}{(\cite{DK} Corollary 5.3.19)}\label{thm:fgcriterion}
Let $X$ be a nonsingular complex projective variety on which $U$ acts linearly with respect to an ample line bundle $L$. Let $L'$ be a $G$-linearisation over a projective completion $\overline{G \times_{U} X}$ of $G \times_{U} X$ extending the $G$ linearisation over $G \times_{U} X$ induced by $L$.
Let $D_1, \ldots , D_r$ be the codimension $1$ components of the boundary of $G
\times_U X$ in $\overline{G \times_U X}$ and 
suppose that for all sufficiently divisible $N$  
$L'_N=L^{\prime}[N
\sum_{j=1}^r D_j]$ is an ample line bundle on $\overline{G \times_{U} X}$. 
Then the algebra of invariants 
$\bigoplus_{k \geq 0} H^0( X,L^{\otimes k})^U$ is finitely generated if and only if  
for all sufficiently divisible $N$ any $G$-invariant section of a positive tensor power of $L'_N$ vanishes on every codimension one component $D_j$. 

\end{theorem}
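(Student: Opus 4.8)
The plan is to compare the algebra of $U$-invariants on $X$ with the algebra of $G$-invariants on the projective completion $\overline{G\times_U X}$, exploiting the reductivity of $G$. First I would record the associated-bundle identification: since $G\times_U X$ is the fibre bundle over $G/U$ with fibre $X$, a $G$-invariant section of the linearisation $\mathcal{L}^{\otimes m}$ induced by $L$ over $G\times_U X$ is determined by its restriction to the fibre $X$ over the identity coset, and this restriction is precisely a $U$-invariant section of $L^{\otimes m}$. This gives
\[ \bigoplus_{m\ge 0} H^0(X,L^{\otimes m})^U \cong \bigoplus_{m\ge 0} H^0(G\times_U X,\mathcal{L}^{\otimes m})^G =: R. \]

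Next I would view each such invariant section over the open set $G\times_U X$ as a rational $G$-invariant section of $(L')^{\otimes m}$ on $\overline{G\times_U X}$. Since $\overline{G\times_U X}$ is projective (and normal, after passing to its normalisation if necessary), such a section has finite order along each boundary divisor $D_j$ and is determined by its codimension-one behaviour. Writing $A_N := \bigoplus_{m\ge 0} H^0(\overline{G\times_U X},(L'_N)^{\otimes m})^G$ with $L'_N = L'[N\sum_j D_j]$, a section lies in $A_N$ precisely when its pole order along each $D_j$ is at most $mN$. Hence $R=\bigcup_N A_N$ is an increasing union of graded subalgebras, and since each $L'_N$ is ample and $G$ is reductive, classical GIT guarantees that every $A_N$ is finitely generated. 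Consequently $R$ is finitely generated if and only if this chain stabilises, i.e. $R=A_N$ for all sufficiently divisible $N$.

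The heart of the matter is thus to show that $R=A_N$ for large $N$ is equivalent to the stated vanishing condition. Here I would use the standard characterisation of the unstable locus for the $G$-action with respect to $L'_N$ as the common zero set of all $G$-invariant sections of positive powers of $L'_N$: the requirement that every such section vanish on each $D_j$ is exactly the statement that each codimension-one boundary component is unstable for $L'_N$, so that the semistable locus meets the boundary of $G\times_U X$ only in codimension at least two. Ampleness of $L'_N$ then lets me identify $A_N$ with the homogeneous coordinate ring of the projective GIT quotient $\overline{G\times_U X}\symp_{L'_N} G$, whose invariant sections are controlled on the semistable locus; combined with normality (Hartogs) to discard the codimension $\ge 2$ part of the boundary, this forces every $U$-invariant section to have pole order along each $D_j$ bounded linearly by $mN$, giving $R\subseteq A_N$ and hence $R=A_N$. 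Conversely, if $R=A_{N_0}$ is finitely generated then all pole orders are bounded by $mN_0$, so for every $N>N_0$ the full pole allowance $mN$ is never attained and every $G$-invariant section of a positive power of $L'_N$ necessarily vanishes on each $D_j$.

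I expect the main obstacle to be precisely this last equivalence: translating the instability of the boundary divisors for one sufficiently divisible $N$ into a uniform linear bound on the pole orders of \emph{all} $U$-invariant sections. Controlling sections that a priori live only in some much larger $A_{N'}$ requires careful joint use of the ampleness of $L'_N$ and the normality of $\overline{G\times_U X}$, and it is here that the hypothesis ``for all sufficiently divisible $N$'' together with the reductivity of $G$ do the real work.
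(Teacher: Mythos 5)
Your framework is the right one and matches how the paper treats this statement: Theorem \ref{thm:fgcriterion} is quoted from \cite{DK}, and the paper's own argument is the proof it gives of the generalisation, Theorem \ref{thm:fgcriteriongeneral} (specialise to trivial $R$). Your identification $\bigoplus_m H^0(X,L^{\otimes m})^U\cong\bigoplus_m H^0(G\times_U X,\mathcal{L}^{\otimes m})^G=R$, the observation that $R=\bigcup_N A_N$ via normality and the extension lemma, the finite generation of each $A_N$ from reductivity of $G$ plus ampleness of $L'_N$, and your forward implication (if $R=A_{N_0}$ then for $N>N_0$ every invariant section of a positive power of $L'_N$ restricts to an element of $R$ with pole order at most $mN_0<mN$, hence vanishes on each $D_j$) all agree with the paper.

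The gap is in the reverse implication, exactly where you predicted the difficulty would lie. Knowing that each $D_j$ is unstable for $L'_N$ does not by itself bound the pole order along $D_j$ of an arbitrary element of $R$: the semistable locus is disjoint from the $D_j$, so identifying $A_N$ with invariants over $\overline{G\times_U X}^{ss}$ and using Hartogs across the codimension-two part of the boundary produces a section defined only away from the $D_j$ and says nothing about its order there; moreover the surjectivity of the restriction $H^0(\overline{G\times_U X},(L'_N)^{\otimes m})^G\to H^0(\overline{G\times_U X}^{ss},(L'_N)^{\otimes m})^G$, which your chain of identifications implicitly needs, is not justified degree by degree. What actually closes the loop in the paper is a divisorial descent: given $s\in R_m$, extend it to an invariant section of $(L'_{N'})^{\otimes m}$ for some sufficiently divisible $N'>N$; by hypothesis that section vanishes on every $D_j$, hence is a section of the twist reduced by $\sum_j D_j$; re-apply the hypothesis at the new level and iterate until the twist is brought down to $(L'_N)^{\otimes m}$. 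This is where the phrase ``for all sufficiently divisible $N$'' is genuinely used, one step at a time. With that descent supplied, your argument is complete; without it, the assertion that every $U$-invariant section has pole order along each $D_j$ bounded by $mN$ is unsupported.
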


\begin{rem} 
This result appears in \cite{DK} as a corollary to a theorem (Theorem 5.3.18 in \cite{DK}) which claims without the additional hypothesis that $L'_N$ is ample for sufficiently large $N$, that $\bigoplus_{k \geq 0} H^0( X,L^{\otimes k})^U$ is finitely generated if and only if any $G$-invariant section of a positive tensor power of $L'_N$ vanishes on every codimension $1$ component $D_j$ in boundary of $G\times_H X$ in $\overline{G\times_H X}$.

However, there is an error in the proof of that theorem: it requires the algebra of $G$-invariants $\oplus_{k\ge 0}H^0(\overline{G\times_H X},(L'_N)^{\otimes k}$ to be finitely generated . Since $G$ is reductive and $\overline{G\times_H X}$ is projective, this is true when $L'_N$ is an ample line bundle for sufficiently divisible $N$, but does not follow in general without assuming such additional hypothesis. Since \cite{DK} Corollary 5.3.19 includes the hypothesis that $L'_N$ is ample line bundle for $N$ sufficiently divisible, its validity is unaffected by this error. 
\end{rem}
Theorem \ref{thm:fgcriterion} can be generalised to allow us to study $H$-invariants for linear algebraic groups $H$ which are neither unipotent nor reductive. Over $\CC$ any linear algebraic group $H$ is a semi-direct product $H=U_H\rtimes R$ where $U_H \subset H$ is the unipotent radical of $H$ (its maximal unipotent subgroup) and $R\simeq H/U_H$ is a reductive subgroup of $H$. If $H$ is a subgroup of a reductive group $G$ then there is an induced right action of $R$ on $G/U_H$ which commutes with the left action of $G$. 
Similarly if $H$ acts on a projective variety $X$ then there is an induced action of $G\times R$ on $G\times_{U_H}X$ with an induced $G\times R$-linearisation. The same is true if we replace the requirement that $H$ is a subgroup of $G$ with the existence of a group homomorphism $H\to G$ whose restriction to $U_H$ is injective.

\begin{definition}
A group homomorphism $H \to G$ from a linear algebraic group $H$ to a reductive group $G$ will be called $U_H$-faithful if its restriction to the unipotent radical $U_H$ of $H$ is injective.  
\end{definition} 

The proof of \cite{DK} Theorem 5.1.18 gives us 

\begin{theorem}\label{thm:fgcriteriongeneral}
Let $X$ be a nonsingular complex projective variety acted on by a linear algebraic group $H=U_H\rtimes R$ where $U_H$ is the unipotent radical of $H$ and let $L$ be a very ample linearisation of the $H$ action defining an embedding $X\subseteq \PP^n$. Let $H \to G$ be an $U_H$-faithful homomorphism into a reductive subgroup $G$ of $\SL(n+1)$ with respect to an ample line bundle $L$. Let $L'$ be a $G\times R$-linearisation over a normal nonsingular projective completion $\overline{G \times_{U_H} X}$ of $G \times_{U_H} X$ extending the $G\times R$ linearisation over $G \times_{U_H} X$ induced by $L$. 
Let $D_1, \ldots , D_r$ be the codimension one components of the boundary of $G
\times_{U_H} X$ in $\overline{G \times_{U_H} X}$, and suppose for all sufficiently divisible $N$  that
$L'_N=L^{\prime}[N
\sum_{j=1}^r D_j]$ is an ample line bundle on $\overline{G \times_{U_H} X}$. 
Then the algebra of invariants 
$\bigoplus_{k \geq 0} H^0( X,L^{\otimes k})^H$ is finitely generated if and only if  
for all sufficiently divisible $N$ any $G\times R$-invariant section of a positive tensor power of $L'_N$ vanishes on every codimension one component $D_j$. 
\end{theorem}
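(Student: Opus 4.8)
The plan is to retrace the proof of \cite{DK} Theorem 5.1.18 (which yields Theorem \ref{thm:fgcriterion}), replacing the reductive group $G$ throughout by the reductive group $G\times R$ and the unipotent group by $U_H$, and to isolate the one genuinely new ingredient: the identification of the $H$-invariants on $X$ with the $G\times R$-invariants on $G\times_{U_H}X$. Since $H=U_H\rtimes R$, the factor $R$ normalises $U_H$, and because the $U_H$-faithful homomorphism $\phi\colon H\to G$ restricts to an injection on $U_H$ we may form the quasi-projective variety $G\times_{U_H}X$ (with $u\cdot(g,x)=(g\phi(u)^{-1},ux)$ for $u\in U_H$) together with the line bundle $\tilde L$ induced by $L$. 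I would equip it with the left action $g'\cdot[g,x]=[g'g,x]$ of $G$ and the action $r\cdot[g,x]=[g\phi(r)^{-1},rx]$ of $R$; one checks (using $\phi(rur^{-1})=\phi(r)\phi(u)\phi(r)^{-1}$) that the $R$-action is well defined and commutes with the $G$-action, so $G\times R$ acts linearly with respect to $\tilde L$. Restricting a $G$-invariant section to the fibre over the identity coset, which is $X$, gives a $U_H$-invariant section, and the residual $R$-action is exactly the given $R$-action on $X$; hence
\[ H^0(X,L^{\otimes k})^H \;\cong\; H^0(G\times_{U_H}X,\,\tilde L^{\otimes k})^{G\times R} \]
for every $k\geq 0$, compatibly with multiplication.

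Next I would bring in the completion and the comparison algebra. Because $G\times R$ is reductive, $\overline{G\times_{U_H}X}$ is projective and $L'_N$ is ample for sufficiently divisible $N$, each graded algebra $\mathcal{B}_N:=\bigoplus_{k\geq 0}H^0(\overline{G\times_{U_H}X},\,(L'_N)^{\otimes k})^{G\times R}$ is finitely generated. Since $(L'_N)^{\otimes k}\otimes(L'_N)^{\otimes l}=(L'_N)^{\otimes(k+l)}$ and there are inclusions $L'_N\hookrightarrow L'_{N'}$ for $N\leq N'$, the $\mathcal{B}_N$ form an increasing chain of graded subalgebras of $A:=\bigoplus_{k\geq 0}H^0(X,L^{\otimes k})^H$, where $\mathcal{B}_N\hookrightarrow A$ is restriction to the open part $G\times_{U_H}X$ followed by the isomorphism above. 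Using the normality of $\overline{G\times_{U_H}X}$, every $H$-invariant section of $L^{\otimes k}$ extends to a $G\times R$-invariant rational section with poles only along the boundary divisors $D_1,\ldots,D_r$, and for fixed $k$ these pole orders are bounded; hence $A=\bigcup_N\mathcal{B}_N$.

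Finally I would deduce the equivalence. From $A=\bigcup_N\mathcal{B}_N$ with each $\mathcal{B}_N$ finitely generated it follows that $A$ is finitely generated if and only if the chain stabilises, i.e. $A=\mathcal{B}_N$ for some sufficiently divisible $N$; equivalently, allowing higher-order poles along the $D_j$ produces no new $G\times R$-invariants. This last condition is precisely that every $G\times R$-invariant section of a positive tensor power of $L'_N$ vanishes on every $D_j$: the bootstrapping argument of \cite{DK}, which upgrades vanishing to order one along each $D_j$ (for all powers) to the complete absence of poles and thereby identifies $A$ with the untwisted invariant algebra, transfers verbatim once $G$ is replaced by the reductive group $G\times R$. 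This gives Theorem \ref{thm:fgcriteriongeneral}.

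The main obstacle is the first paragraph: arranging the left $G$-action and the right $R$-action on $G\times_{U_H}X$ to commute with the correct $\phi$-twist, verifying that $U_H$-faithfulness of $\phi$ (rather than an honest inclusion $H\subseteq G$) suffices both to form the associated bundle and to control the codimension one boundary over the chosen completion, and checking that the resulting $G\times R$-invariants agree with the $H$-invariants not only on the open orbit but compatibly across $\overline{G\times_{U_H}X}$. Once this correspondence is secured and the reductivity of $G\times R$ is used in place of that of $G$, the geometric finite-generation argument is identical to the one in \cite{DK}.
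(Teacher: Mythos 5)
Your argument is correct and matches the paper's proof in all essentials: the identification $H^0(X,L^{\otimes k})^H \cong H^0(G\times_{U_H}X,\tilde{L}^{\otimes k})^{G\times R}$ obtained by taking $G$-invariants and then $R$-invariants, the extension of invariant sections across the boundary divisors using normality and boundedness of pole orders, and finite generation of the invariants on the ample projective completion via reductivity of $G\times R$. Your repackaging of the two directions as stabilisation of the increasing chain of finitely generated subalgebras $\mathcal{B}_N$ inside $A=\bigcup_N\mathcal{B}_N$ is only a cosmetic reorganisation of the paper's forward/reverse argument, not a genuinely different route.
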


\begin{proof}
For the forward direction first note that by restriction
$$\bigoplus_{k \geq 0} H^0( \overline{G \times_{U_H} X},(L_N')^{\otimes k})^{G \times R} \subseteq 
\bigoplus_{k \geq 0} H^0( {G \times_{U_H} X},(L_N')^{\otimes k})^{G \times R} 
= (\bigoplus_{k \geq 0} H^0( {G \times_{U_H} X},(L_N')^{\otimes k})^{G})^{  R} $$
$$\cong 
(\bigoplus_{k \geq 0} H^0( { X},L^{\otimes k})^{U_H})^ { R} = \bigoplus_{k \geq 0} H^0( { X},L^{\otimes k})^H. $$ 
We can identify $H^0( \overline{G \times_{U_H} X},L_n')$ with a subspace of $H^0( \overline{G \times_{U_H} X},L_{n+1}')$ for
any natural number $n$, so that a section $f$ of $L_{n+1}')$ extends to a section $F$ of $L_{n}')$ if and only if it vanishes on each $D_j$ as a section of $L_{n+1}')$. Any  given $G\times R$-invariant section of 
$L^{\otimes k}$
over $G \times_{U_H} X$ extends to a section of $(L_N')^{\otimes k}$ over each $D_j$ for large enough $N$ and thus by normality extends over $ \overline{G \times_{U_H} X}$ (cf. the proof of Converse 1.13 on page 41 of  \cite{GIT}).
So if the algebra of invariants $\bigoplus_{k \geq 0} H^0( { X},L^{\otimes k})^H$ is finitely generated, for large enough $N$ the finitely many generators will 
 all extend over and vanish on every $D_j$ as a section of a tensor power of $L_N'$, and  hence every element of 
$\bigoplus_{k \geq 0} H^0( { X},L^{\otimes k})^H$ will have the same property.

The reverse direction follows by proving that for any such $N$ the ring of invariants
$$
\bigoplus_{k \geq 0} H^0( { X},L^{\otimes k})^H \cong
\bigoplus_{k \geq 0} H^0( {G \times_{U_H} X},(L_N')^{\otimes k})^{G \times R}
 $$
 is isomorphic to the ring of invariants 
$\bigoplus_{k \geq 0} H^0( \overline{G \times_{U_H} X},(L_N')^{\otimes k})^{G \times R}$, which is finitely generated since $ \overline{G \times_{U_H} X}$ is
a projective variety acted on linearly 
with respect to the ample linearisation $L′_N$ by the reductive group $G$. This isomorphism
￼￼￼￼arises since any  $G \times R$-invariant section $s$ over $G \times_{U_H} X$ of $L′_N$ extends as above to an invariant section of $L_{N′}'$ over $\overline{G \times_{U_H} X}$ for some $N′ > N$. By hypothesis this section vanishes on each $D_j$ and hence defines an invariant section of $L_{N'-1}'$ extending $s$. Repeating this argument enough times we find that
$s$ extends to a section of $L′_N$. The same argument applies to any invariant section $s$ over $G \times_{U_H} X$ of a positive tensor power $(L′_N)^{\otimes k}$ of $L′_N$, so we have 
$$\bigoplus_{k \geq 0} H^0( { X},L^{\otimes k})^H \cong
\bigoplus_{k \geq 0} H^0( \overline{G \times_{U_H} X},(L_N')^{\otimes k})^{G \times R}$$ as required.
\end{proof}

\begin{rem}
The proof of Theorem \ref{thm:fgcriteriongeneral} tells us that when the hypotheses hold and the algebra of invariants $\bigoplus_{k \geq 0} H^0( X,L^{\otimes k})^H$ is finitely generated then the enveloping quotient
\begin{equation}\label{equotient}
X/\!/H =\mathrm{Proj}(\oplus_{k \geq 0} H^0( X,L^{\otimes k})^H)\simeq \overline{G \times_{U_H} X}/\!/_{L'_N} (G\times R)
\end{equation}
for sufficiently divisible $N$. 
\end{rem}

\begin{rem}
Note that in this argument there is in fact no requirement for $U=U_H$ to be the full unipotent radical of $H$; all we need  is that $U$ is a normal subgroup of  
$H$ and $R=H/U$ is reductive.  
\end{rem}

In general even when the algebra of invariants $\bigoplus_{k \geq 0} H^0( X,L^{\otimes k})^H$ on $X$ is finitely generated and \eqref{equotient} is true, the morphism $X \to X/\!/_eH$ is not surjective and in order to study the geometry of $X/\!/_eH$ by identifying it with $\overline{G \times_{U_H} X}/\!/_{L'_N} (G\times R)$ we need information about the boundary $\overline{G \times_{U_H} X}\setminus G\times_{U_H} X$ of $\overline{G \times_{U_H} X}$. If, however, we are lucky enough to to find a $G \times R$-equivariant projective completion $\overline{G \times_{U_H} X}$ with a linearisation $L$ such that for sufficiently large $N$ $L'_N$ is an ample line bundle and the boundary $\overline{G \times_{U_H} X}\setminus G\times_{U_H} X$ is unstable for $L'_N$ then we have a situation which is almost as well behaved as for reductive group actions on projective varieties with ample linearisations as follows.

\begin{definition} Let $X^{\barss}=X\cap \overline{G \times_{U_H} X}^{ss,G\times R}$ and $X^{\bars}=X\cap \overline{G \times_{U_H} X}^{s,G\times R}$
where $X$ is embedded in $G \times_{U_H} X$ in the obvious way as $x\mapsto [1,x]$. 
\end{definition}

\begin{theorem}\label{thm:geomcor} 
Let $X$ be a complex projective variety acted on by a linear algebraic group $H=U_H\rtimes R$ where $U_H$ is the unipotent radical of $H$ and let $L$ be a very ample linearisation of the $H$ action defining an embedding $X\subseteq \PP^n$. Let $H \to G$ be an $U_H$-faithful homomorphism into a reductive subgroup $G$ of $\SL(n+1)$ with respect to an ample line bundle $L$. Let $L'$ be a $G\times R$-linearisation over a projective completion $\overline{G \times_{U_H} X}$ of $G \times_{U_H} X$ extending the $G\times R$ linearisation over $G \times_{U_H} X$ induced by $L$. 
Let $D_1, \ldots , D_r$ be the codimension $1$ components of the boundary of $G
\times_{U_H} X$ in $\overline{G \times_{U_H} X}$, and suppose that some
integral multiple of each $D_j$ is Cartier and for all sufficiently divisible $N$  that 
$L'_N=L^{\prime}[N
\sum_{j=1}^r D_j]$ is an ample line bundle on $\overline{G \times_{U_H} X}$. If 
for all sufficiently divisible $N$ any $G\times R$-invariant section of a positive tensor power of $L'_N$ vanishes on the 
 boundary of $G
\times_{U_H} X$ in $\overline{G \times_{U_H} X}$, then
\begin{enumerate}
\item the algebra of invariants 
$\bigoplus_{k \geq 0} H^0( X,L^{\otimes k})^H$ is finitely generated;
\item the enveloping quotient $X/\!/H \simeq \overline{G \times_{U_H} X}/\!/_{L'_N} (G\times R)\simeq \mathrm{Proj}(\oplus_{k \geq 0} H^0( X,L^{\otimes k})^H)$ for sufficiently divisible $N$;
\item $\overline{G \times_{U_H} X}^{ss,G\times R, L'_N} \subseteq G\times_{U_H} X$ and therefore the morphism 
\[ \phi: 
X^{\barss} \rightarrow X/\!/H\]
is surjective and $X/\!/H$ is a categorical quotient of $
X^{\barss}$;
\item if $x,y \in 
X^{\barss}$ then $\phi(x) = \phi(y)$ if and only if the closures of the $H$-orbits of $x$ and $y$ meet in $
X^{\barss}$;
\item $\phi$ restricts to a geometric quotient $X^{\bars} \rightarrow X^{\bars}/H \subseteq  X/\!/H$.
\end{enumerate} 
\end{theorem}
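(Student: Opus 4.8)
The plan is to deduce parts (1) and (2) directly from Theorem \ref{thm:fgcriteriongeneral}, and to obtain the geometric statements (3)--(5) by transporting the standard properties of the reductive GIT quotient of $\overline{G \times_{U_H} X}$ by $G \times R$ across the slice $X \hookrightarrow G\times_{U_H}X$, $x \mapsto [1,x]$. For (1) and (2): since every $G\times R$-invariant section of a positive power of $L'_N$ is assumed to vanish on the whole boundary, it vanishes in particular on each codimension-one component $D_j$, so the hypotheses of Theorem \ref{thm:fgcriteriongeneral} hold; this gives finite generation of $\bigoplus_k H^0(X,L^{\otimes k})^H$, which is (1), and (2) is then the identification \eqref{equotient}.

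Next I would prove the first assertion of (3), that $Z := \overline{G \times_{U_H} X}^{ss, G\times R, L'_N}$ is contained in $G\times_{U_H} X$. A point at which every invariant section of every positive power of $L'_N$ vanishes cannot be semistable; by hypothesis all such sections vanish on the boundary $\overline{G\times_{U_H}X}\setminus (G\times_{U_H}X)$, so the boundary contains no semistable point and $Z\subseteq G\times_{U_H}X$. Write $\pi\colon Z \to X/\!/H$ for the GIT quotient of the projective variety $\overline{G\times_{U_H}X}$ by the reductive group $G\times R$ with ample linearisation $L'_N$, which is a good quotient and hence categorical and surjective. Surjectivity of $\phi$ then follows: given $q\in X/\!/H$ choose $z\in Z$ with $\pi(z)=q$; since $z\in G\times_{U_H}X$ we may write $z=[g,x]$, and then $g^{-1}\cdot z = [1,x]$ lies in the same $G$-orbit, hence is semistable, so $x\in X^{\barss}$ and $\phi(x)=\pi([1,x])=\pi(z)=q$.

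The heart of the matter is the orbit correspondence across the slice. The projection $[g,x]\mapsto gU_H$ realises $G\times_{U_H}X$ as a $G\times R$-equivariant fibre bundle over $G/U_H$ with fibre $X=\{[1,x]\}$ over the base point $eU_H$, where $G\times R$ acts by $(g',r)\cdot[g,x]=[g'gr^{-1},rx]$. The stabiliser in $G\times R$ of the base point is the copy of $H$ given by $ur\mapsto(ur,r)$, and it acts on the fibre by the original $H$-action, since $(ur,r)\cdot[1,x]=[u,rx]=[1,urx]$. A direct computation then shows $(G\times R)\cdot[1,x]\cap X = \{[1,y]:y\in Hx\}$, so the $G\times R$-orbits meeting the slice correspond bijectively to the $H$-orbits in $X$, and $[1,\overline{Hx}]\subseteq \overline{(G\times R)\cdot[1,x]}$. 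With this dictionary the remaining assertions of (3), together with (4) and (5), are transported from the corresponding standard GIT facts for $G\times R$ acting on $Z$: that $X/\!/H$ is a categorical quotient of $X^{\barss}$; that two points of $X^{\barss}$ are identified by $\phi$ exactly when the closures of their $G\times R$-orbits meet in $Z$; and that $\pi$ restricts to a geometric quotient on the stable locus $Z^s$, whose intersection with the slice is $X^{\bars}$.

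The main obstacle is the closure half of this dictionary, needed for (4) and (5): one must show that for $x,y\in X^{\barss}$ the $G\times R$-orbit closures meet inside $Z$ if and only if $\overline{Hx}\cap\overline{Hy}\cap X^{\barss}\neq\emptyset$. One inclusion is formal from $[1,\overline{Hx}]\subseteq \overline{(G\times R)\cdot[1,x]}$. For the converse one uses $Z\subseteq G\times_{U_H}X$ together with the left $G$-action to move any common point $z=[g,w]$ of the two orbit closures within $Z$ to $[1,w]$ with $w\in X^{\barss}$, and then the quasi-affineness of the base $G/U_H$ to rule out extra boundary directions coming from $G/U_H$ that would enlarge the intersection of the orbit closure with the slice beyond $\overline{Hx}$; this is precisely where the instability of the boundary, which forces $Z$ into the interior, is essential. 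Once the closure dictionary is established, (4) and (5) are immediate translations of the orbit-separation and geometric-quotient properties of the reductive quotient $\pi$.
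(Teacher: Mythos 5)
Your handling of parts (3)--(5) is essentially the paper's own argument: the instability of the whole boundary forces $\overline{G \times_{U_H} X}^{ss,G\times R,L'_N}$ into $G\times_{U_H}X$ (indeed it equals $G\times_{U_H}X^{\barss}$), and the quotient properties are then transported across the slice $x\mapsto[1,x]$. One small correction there: the "converse" half of your orbit-closure dictionary does not come from quasi-affineness of $G/U_H$, which plays no role; what you need is that the $G\times R$-sweep $G\times_{U_H}S$ of a closed $H$-invariant subset $S$ of $X^{\barss}$ is closed in $G\times_{U_H}X^{\barss}$, which follows from the bundle structure over $G/U_H$ (this is exactly the closedness statement the paper records in the remark after the theorem).

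The genuine gap is in your derivation of (1) and (2). You cite Theorem \ref{thm:fgcriteriongeneral}, but that theorem assumes $X$ is nonsingular and that $\overline{G\times_{U_H}X}$ is a \emph{normal} nonsingular completion, whereas Theorem \ref{thm:geomcor} assumes neither: $X$ is an arbitrary complex projective variety and the completion is arbitrary. Normality is not decorative in Theorem \ref{thm:fgcriteriongeneral} -- it is what allows invariant sections over $G\times_{U_H}X$ to be extended across the boundary (the Converse 1.13 argument from \cite{GIT}) -- so your citation silently reintroduces hypotheses that the theorem you are proving has deliberately dropped; the paper's remark immediately after Theorem \ref{thm:geomcor} addresses exactly this point. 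The repair, and the route the paper actually takes, is to get finite generation \emph{from} the containment $\overline{G \times_{U_H} X}^{ss,G\times R,L'_N}\subseteq G\times_{U_H}X$ rather than before it: that containment yields
$$\bigoplus_{k\ge 0} H^0(X,L^{\otimes rk})^H\;\simeq\;\bigoplus_{k\ge 0} H^0\bigl(G\times_{U_H}X,(L'_N)^{\otimes rk}\bigr)^{G\times R}\;\simeq\;\bigoplus_{k\ge 0} H^0\bigl(\overline{G\times_{U_H}X}^{ss,G\times R,L'_N},(L'_N)^{\otimes rk}\bigr)^{G\times R},$$
and the last algebra is the homogeneous coordinate ring of the projective reductive GIT quotient $\overline{G\times_{U_H}X}/\!/_{L'_N}(G\times R)$ with respect to the descended ample bundle, hence finitely generated; no section ever needs to be extended over a higher-codimension boundary component, so normality is never invoked. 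With (1) and (2) re-derived this way, the rest of your argument goes through.
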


\begin{rem}
Note that the hypotheses in Theorem \ref{thm:fgcriteriongeneral} that $X$ should be nonsingular and that $\overline{G \times_{U_H} X}$ should be normal and nonsingular are not needed in Theorem 
\ref{thm:geomcor}. This is because these hypotheses are only 
required to ensure that sections extend over irreducible components of codimension at least two in the boundary which are not unstable; in the circumstances of Theorem \ref{thm:geomcor} there are no such irreducible components.
\end{rem}

\begin{proof} 
If $N$ is sufficiently divisible then the composition 
\begin{equation}\label{comp}
X^{\barss}\to \overline{G \times_{U_H} X}^{ss,G\times R,L'_N} \to \overline{G \times_{U_H} X}/\!/_{L'_N}(G\times R)\end{equation}
is an $H$-invariant morphism, and $\overline{G \times_{U_H} X}/\!/_{L'_N}(G\times R)$ has an ample line bundle $\mathcal{L}$ which pulls back to a positive tensor power $L^{\otimes r}$ of the restriction to $X^{\barss}$ of the linearisation $L$ of the $H$ action on $X$.  

$X^{\barss}$ is an open subset of $X^{ss,fg}$ and \eqref{comp} factors through the quotient map 
\[q:X^{ss,fg} \to q(X^{ss,fg})\subseteq \mathcal{U}\]
where $\mathcal{U}$ is a quasi-projective open subset of the enveloping quotient $X/\!/_eH$ with a birational morphism $\tau: \mathcal{U} \dasharrow  \overline{G \times_{U_H} X}/\!/_{L'_N}(G\times R)$
as
\[X^{\barss} \hookrightarrow X^{ss,fg} \xrightarrow{q} \mathcal{U} \xrightarrow{\tau} \overline{G \times_{U_H} X}/\!/_{L'_N}(G\times R).\]
The hypothesis that the boundary of $\overline{G \times_{U_H} X}$ is unstable ensures that this composition is surjective. Moreover, $\overline{G \times_{U_H} X}/\!/_{L'_N}(G\times R)$ is a categorical quotient of $G\times_{U_H} X^{\barss}$ by $G\times R=G\times (H/U_H)$, so it is also a categorical quotient of $G\times X^{\barss}$ by $G\times H$ and a categorical quotient of $X^{\barss}$ by $H$, and (4) and (5) now follow from the analogous properties for classical GIT applied to the reductive group $G \times R$. The $H$-invariant morphism 
$q: X^{ss,fg} \to \mathcal{U}$ then factors through a birational morphism 
\[\sigma: \overline{G \times_{U_H} X}/\!/_{L'_N}(G\times R) \to \mathcal{U}.\]
Since $\s$ is surjective and $\overline{G \times_{U_H} X}/\!/_{L'_N}(G\times R)$ is projective it follows that $\mathcal{U}$ is projective which means that $\mathcal{U}=X/\!/_e H$ and $q$ is surjective. Furthermore $\s$ and $\tau$ are mutually inverse isomorphisms between $X/\!/_e H$ and $\overline{G \times_{U_H} X}/\!/_{L'_N}(G\times R)$. Finally, since 
\[\overline{G \times_{U_H} X}^{ss,G\times R, L'_N} \subset G\times_{U_H}X\] 
we have 
\begin{multline}
\bigoplus_{k\ge 0} H^0(X,L^{\otimes rk})^H \simeq \bigoplus_{k\ge 0} H^0(G \times_{U_H} X,(L'_N)^{\otimes rk})^{G\times R}\simeq \\
\bigoplus_{k\ge 0} H^0(\overline{G \times_{U_H} X}^{ss,G\times R,L'_N},(L'_N)^{\otimes rk})^{G\times R}\simeq 
\oplus_{k\ge 0} H^0(\overline{G \times_{U_H} X}/\!/_{L'_N}(G\times R),\mathcal{L}^{\otimes k}).
\end{multline}
Thus $\bigoplus_{k\ge 0} H^0(X,L^{\otimes rk})^H$ is a finitely generated graded algebra and 
\[X/\!/_eH \simeq \overline{G \times_{U_H} X}/\!/_{L'_N} (G\times R)\simeq \mathrm{Proj}(\oplus_{k \geq 0} H^0( X,L^{\otimes k})^H).\]
\end{proof}

\begin{rem}
Note that in the circumstances of Theorem \ref{thm:geomcor} so that $\overline{G \times_{U_H} X}^{ss,G\times R, L'_N}=G\times_{U_H} 
X^{\barss}$ we get a nice geometric description of $X/\!/H$. We know from classical GIT that  the morphism from 
$\overline{G \times_{U_H} X}^{ss,G\times R, L'_N}=G\times_{U_H} 
X^{\barss}$ to $X/\!/H$ is $G$-invariant and surjective, and maps two points of $X^{\barss}$ to the same point of $X/\!/H$ if and only if the closures of their $G \times R$-orbits meet in $\overline{G \times_{U_H} X}^{ss,G\times R, L'_N}=G\times_{U_H} X^{\barss}$. Since the $G \times R$-sweep in $G\times_{U_H} X^{\barss}$ of any closed $H$-invariant subset of $X^{\barss}$ is closed in $G\times_{U_H} X^{\barss}$, it follows that the $H$-invariant morphism
$\phi: X^{\barss} \twoheadrightarrow X/\!/_LH$ is surjective and if $x_1,x_2\in X^{\barss}$ then $\phi(x_1)=\phi(x_2)$ if and only if $\overline{Hx_1} \cap \overline{Hx_2}\cap X^{\barss}\neq \emptyset$, as in Theorem \ref{thm:geomcor} (3) and (4). We can also use the Hilbert--Mumford criteria for (semi)stability from classical GIT to determine the subsets $X^{\bars}$ and $X^{\barss}$ of $X$ in an analogous way.
\end{rem}

\subsection{Symplectic geometry of $X/\!/H$}

Suppose that the action of $H$ on $X$ extends to a linear action of $G$ on $X$ and that the projective completion $\overline{G \times_{U_H} X}$ is of the form $\overline{G/U_H} \times X$ where $\overline{G/U_H}$ is a $G\times R$-equivariant projective completion of $G/U_H$ and $G \times_{U_H} X$ is identified with $G/U_H \times X$ via the $G\times R$-equivariant isomorphism
\[[g,x] \mapsto (gU_H, gx).\]
If furthermore $K$ is a maximal compact subgroup of $G$ such that $K_R=K\cap R$ is a maximal compact subgroup of $R$ then we can give a moment map description of $X/\!/H$. For this we choose coordinates for the projective embedding of $X$ defined by $L^N$ and of $\overline{G/U_H}$ such that $K$ acts unitarily. Then we have moment maps
\[\mu_X:X \to k^* \text{ and } \mu_{\overline{G/U_H}}: \overline{G/U_H} \to k^* \times k_R^*\] 
for the actions of $K$ on $X$ and of $K\times K_R$ on $\overline{G/U_H}$ such that the moment map for the action of $K \times K_R$ on $\overline{G/U_H} \times X$ with respect to $L'_N$ is given by
\[\mu:(y,x) \mapsto (N\mu_{\overline{G/U_H}}(y)+\mu_X(x),0) \in k^* \times k^*_R\]
We can identify $X/\!/H$ with 
\[\mu^{-1}(0)/(K \times K_R)=\left\{(y,x)\in (\pi_{k_R} \circ \mu_{\overline{G/U_H}})^{-1}(0) \times X:\mu_X(x)=-N\pi_k\mu_{\overline{G/U_H}}(y)\right\}/(K \times K_R)\]
where 
\[\pi_k: k^* \times k^*_R \to k^* \text{ and } \pi_{k_R}: k^* \times k^*_R \to k^*_R \]
are the projections.
Given a good understanding of the moment map $\mu_{\overline{G/U_H}}:\overline{G/U_H} \to k^*\times k_R^*$ this can provide a nice description of $X/\!/H$ in terms of $\mu_X$.

\begin{ex}
When the unipotent radical $U_H$ of $H$ is a maximal unipotent subgroup of $G$ we can use 
 the theory of symplectic implosion, due to Guillemin,
Jeffrey and Sjamaar \cite{Guillemin-JS:implosion} (or more generally when $U$ is the unipotent radical of a parabolic subgroup of $G$ we can use a generalised version of symplectic implosion \cite{Ksympimpl}).  

Let us choose a
$K$-invariant inner product on the Lie algebra \( \kf \) of a maximal compact subgroup \( K \) of $G$, which
allows us to identify \( \kf \) with its dual \( \kf^* \). Let $\tf_+$ be
a positive Weyl chamber in the Lie algebra \( \tf \) of a maximal torus \( T \) of $K$.
Given a symplectic
manifold \( M \) with a Hamiltonian symplectic action of  \( K \), the implosion \(
M_{\mathrm{impl}} \) is a stratified symplectic space with a Hamiltonian action
of the maximal torus \( T \) of \( K \), such that there is an identification of reduced spaces
$$ 
  M \symp_\lambda^s K   = M_\impl \symp^s_\lambda T  = (M
\times \mathsf O_{-\lambda}) \symp_0^s K = \mu^{-1}(\lambda)/{\Stab_K(\lambda)}
$$ 
for all \( \lambda \) in the closure of the fixed positive Weyl
chamber in \( \tf^* \), where \( \symp_\lambda^s \)
 denotes symplectic
reduction at level \( \lambda \)  
and  \( \mathsf
O_\lambda \) is the coadjoint orbit of \( K \) through \( \lambda \)
with its canonical symplectic structure, while   
 \(
\mu\colon M \to \kf^* \) is the moment map for the \( K \)-action on
\( M \) and \( \Stab_K(\lambda) \) is the stabiliser in \( K \) of \(
\lambda \in \kf^* \) under the coadjoint action of \( K \).

When \( M \)
is the cotangent bundle \( T^*K \) (which may be identified with \(
G = K_\CC \)) then  \( (T^*K)_\impl \) is obtained from \( K \times \tf_{+}
\) by identifying \( (k_1, \xi) \) with \( (k_2, \xi) \) if \( k_1,
k_2 \) lie in the same orbit of the
commutator subgroup of \( \Stab_K(\xi)\). If \( \xi \) is in the
interior of the chamber, its stabiliser is a torus and no
 identifications are made: an open dense subset of \(
(T^*K)_\impl \) is just the product of \( K \) with the interior of
the Weyl chamber.

As  \( T^*K
\) has a Hamiltonian \( K \times K \)-action 
its implosion inherits a
Hamiltonian \( K \times T \)-action. The moment map for the $K$-action is induced by the map
 \( K \times \tf_{+} \to \kf \cong \kf^*
\)  given by $(k,\xi) \mapsto k(\xi)$ while the moment map for the $T$-action is induced by the projection onto $\tf_+ \subseteq \tf \cong \tf$.
  For a general symplectic
manifold \( M \) with a Hamiltonian \( K \)-action  the
imploded space \( M_\impl \) is the symplectic quotient \( (M \times
(T^*K)_\impl) \symp_0^s K \), with its induced Hamiltonian \( T
\)-action. This can be obtained from $\mu^{-1}(\tf_+)$ by identifying $x$ with $y$ if $\mu(x) = \mu(y) = \xi$ and furthermore $x$ and $y$ lie in the same orbit of the
commutator subgroup of \( \Stab_K(\xi)\).

 \( (T^*K)_\impl \) can be identified with the affine
variety which is the non-reductive GIT quotient
\begin{equation*}
  K_\CC \symp U = \Spec(\calo(K_\CC)^U) = \overline{G/U},
\end{equation*}
 of the complex
reductive group \( G = K_\CC \) 
by a maximal unipotent subgroup \( U \); here $\calo(K_\CC)^U$ is always finitely generated. This variety
has a stratification by quotients of \( K_\CC \) by commutators of
parabolic subgroups; the open stratum is just \( K_\CC/U \) and \( K_\CC
\symp U \) is the canonical affine completion of the quasi-affine
variety \( K_\CC / U \). Thus when $G$ acts linearly on a projective variety $X$ with an ample linearisation $L$, then the enveloping quotient $X/\!/ U$ has a description in terms of the corresponding moment map $\mu_{X,K}: X \to \kf^*$: it can be obtained from $\mu_{X,K}^{-1}(\tf_+)$ by identifying $x$ with $y$ if $\mu_{X,K}(x) = \mu_{X,K}(y) = \xi$ and furthermore $x$ and $y$ lie in the same orbit of the
commutator subgroup of \( \Stab_K(\xi)\). There is a similar description for $X/\!/U$ when $U$ is the unipotent radical of any parabolic subgroup of $G$. Moreover when $H$ is a subgroup of the normaliser of $U$ in $G$ with reductive quotient $R = H/U$ which can be identified with the complexification of a subgroup $K_R$ of $K$, then we get an induced moment map for the action of $K \times K_R$ on $X \times \overline{G/U} $ and thus a description of $X/\!/H$ in terms of $\mu_{X,K}$ and the moment map $\mu_R$ for the action of $K \cap R$ on  $\overline{G/U}$. In the situation of Theorem \ref{thm:geomcor} we can identify $X/\!/H$ with $\mu_{X,K}^{-1}(\tf_+^R)/(K \cap R)$ where $\tf_+^R$ is a $K \cap R$-invariant subset of $\tf_+$ whose intersection with the image of $\mu_{X,K}$ does not meet the boundary of $\tf_+$.
\end{ex}

\section{Embeddings in Grassmannians}\label{sec:construction}
Let $U$ be a unipotent subgroup of the complex special linear group $\SL(n)$ and 
let $\hU=U \rtimes \CC^*$
be a subgroup of the complex general linear group $\GL(n)$ which is a $\CC^*$-extension of $U$ such that the weights of the $\CC^*$ action on $\mathrm{Lie}(U)$ are all strictly positive. Let us suppose also that $U$ and $\hU$ are upper triangular subgroups of $\GL(n)$ which are generated along the first row; that is, there are integers $1 = \weight_1 < \weight_2 \leq \weight_3 \leq \cdots \leq \weight_n$
and polynomials $p_{i,j}(\alpha_1,\ldots,\alpha_n)$ in $\alpha_1,\ldots,\alpha_n$
with complex coefficients for $1<i<j \leq n$ such that 
\begin{equation}\label{presentation}
\hU=\left\{\left(\begin{array}{ccccc}\a_1 & \a_2 & \a_3 & \ldots & \a_n \\ 0 & \a_1^{\weight_2} & p_{2,3}(\ba) & \ldots & p_{2,n}(\ba) \\ 0 & 0 & \a_1^{\weight_3} & \ldots & p_{3,n}(\ba) \\ \cdot & \cdot & \cdot & \cdot &\cdot \\ 0 & 0 & 0 & 0 & \a_1^{\weight_n}  \end{array}\right)
: \ba =(\a_1,\ldots, \a_n) \in \CC^* \times \CC^{n-1} \right\}
\end{equation}
and 
$$
U=\left\{\left(\begin{array}{ccccc} 1 & \a_2 & \a_3 & \ldots & \a_n \\ 0 & 1 & p_{2,3}(\a) & \ldots & p_{2,n}(\a) \\ 0 & 0 & 1 & \ldots & p_{3,n}(\a) \\ \cdot & \cdot & \cdot & \cdot &\cdot \\ 0 & 0 & 0 & 0 & 1  \end{array}\right) 
:\a=(1,\a_2, \ldots, \a_n) \in \CC^{n-1}\right\}.
$$
This implies that the Lie algebra $\lieu = {\rm Lie}(U)$ has a similar form:
$$
\lieu=\left\{\left(\begin{array}{ccccc} 0 & a_2 & a_3 & \ldots & a_n \\ 0 & 0 & q_{2,3}(a) & \ldots & q_{2,n}(a) \\ 0 & 0 & 0 & \ldots & q_{3,n}(a) \\ \cdot & \cdot & \cdot & \cdot &\cdot \\ 0 & 0 & 0 & 0 & 0  \end{array}\right) 
:a=(a_2, \ldots, a_n) \in \CC^{n-1}\right\}
$$
where the $q_{i,j}$ are linear forms in the parameters $a=(a_2,\ldots, a_n)  \in \CC^{n-1}$ satisfying the following properties:
\begin{enumerate}
\item[(i)] $q_{i,j}=0$ for $i\le j$.
\item[(ii)] Let $\weightlie_i=\weight_i-1$ for $i=1,\ldots n$ be the weights of the adjoint
action of the subgroup $\CC^*$ of $\hat{U}$ on $\hat{\lieu}=\Lie \hU$,
so that $\weightlie_1=0$ and $\weightlie_i > 0$ if $i=2,\ldots, n$. Then the $\CC^*$-action
makes $\lieu = \Lie U$ into a graded Lie algebra, 
and therefore
\begin{equation} \label{structure}
q_{i,j}(a_2,\ldots, a_n)=\sum_{\ell:\weightlie_\ell+\weightlie_i=s_j}c_j^{\ell i}a_\ell
\end{equation} 
for some structure coefficients $c_j^{\ell i} \in \CC$. 
\end{enumerate}


\begin{rem} \label{rmk3.2} 
In particular, \eqref{structure} implies that $c^{\ell i}_j=0$ for $\ell\ge j$ unless $i=1$. But $q_{1,j}=a_j$ so this means that for $i\ge 2$ 
\[q_{i,j}(a_2,\ldots, a_n)=q_{ij}(a_2,\ldots, a_{j-1})\]
is a linear form in the first $j-1$ free parameters. It follows immediately that for  
$j \geq i\geq 2$ 
\[p_{i,j}(\ba) = p_{i,j}(\a_1, \ldots, \a_{j-1})\]
depends only on $\a_1, \ldots, \a_{j-1}$. 
\end{rem}

\begin{prop}\label{homogprop}
 Let the weighted degree of $\a_s$ be $\deg(\a_s)=\weight_s$
for $1 \leq s \leq n$. Then
\begin{enumerate} 
\item[(i)] the polynomial $p_{i,j}(\ba)$ which is the $(i,j)$th entry of the element of $\hat{U}$ parametrised by $\ba = (\a_1,\ldots,\a_n) \in
\CC^* \times \CC^{n-1}$ is homogeneous of degree $\weight_i$ in $\a_1,\ldots,\a_n$;
\item[(ii)] $p_{i,j}(\ba)$ is weighted homogeneous of degree $\weight_j$
in $\a_1,\ldots,\a_n$. 
\end{enumerate}
\end{prop}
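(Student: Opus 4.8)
The plan is to derive both homogeneity statements directly from the group law, by left- and right-multiplying a general element of $\hU$ by the diagonal one-parameter subgroup $D_s=\diag(s^{\weight_1},\ldots,s^{\weight_n})$, which constitutes the $\CC^*$ factor of $\hU=U\rtimes\CC^*$ and hence lies in $\hU$. Write $M(\ba)$ for the matrix in (\ref{presentation}) parametrised by $\ba=(\a_1,\ldots,\a_n)\in\CC^*\times\CC^{n-1}$. The one structural fact I would isolate first is that this parametrisation is recorded by the first row: the first row of $M(\ba)$ is exactly $(\a_1,\ldots,\a_n)$, so any element of $\hU$ with nonzero $(1,1)$-entry is determined by, and recovered from, its first row.

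For (i) I would left-multiply. Since $(D_s M(\ba))_{i,j}=s^{\weight_i}M(\ba)_{i,j}$, multiplication by $D_s$ scales the $i$-th row by $s^{\weight_i}$; as $\weight_1=1$, the first row becomes $(s\a_1,\ldots,s\a_n)$. Because $D_s\in\hU$ and $\hU$ is a group, $D_s M(\ba)\in\hU$, and having first row $s\ba=(s\a_1,\ldots,s\a_n)$ it must equal $M(s\ba)$. Comparing the $(i,j)$-entries of $D_s M(\ba)$ and $M(s\ba)$ gives
\[p_{i,j}(s\a_1,\ldots,s\a_n)=s^{\weight_i}\,p_{i,j}(\a_1,\ldots,\a_n),\]
which is ordinary homogeneity of degree $\weight_i$; the same comparison on the first-row entries ($p_{1,j}=\a_j$, degree $1=\weight_1$) and the diagonal entries ($\a_1^{\weight_i}$, degree $\weight_i$) shows the statement is consistent on every entry.

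For (ii) I would instead right-multiply. Here $(M(\ba)D_s)_{i,j}=s^{\weight_j}M(\ba)_{i,j}$ scales the $j$-th column by $s^{\weight_j}$, so the first row becomes $(s^{\weight_1}\a_1,\ldots,s^{\weight_n}\a_n)$ and therefore $M(\ba)D_s=M(\ba'')$ with $\ba''=(s^{\weight_1}\a_1,\ldots,s^{\weight_n}\a_n)$, the weighted rescaling of $\ba$. Comparing $(i,j)$-entries now yields
\[p_{i,j}(s^{\weight_1}\a_1,\ldots,s^{\weight_n}\a_n)=s^{\weight_j}\,p_{i,j}(\a_1,\ldots,\a_n),\]
which is precisely weighted homogeneity of degree $\weight_j$ when $\deg(\a_s)=\weight_s$.

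There is no serious obstacle here; the only points needing care are that $D_s$ genuinely lies in $\hU$ (it is the grading $\CC^*$, visible as the diagonal torus in (\ref{presentation})) and that the products $D_s M(\ba)$ and $M(\ba)D_s$, a priori known only to lie in $\hU$, may be identified with $M(s\ba)$ and $M(\ba'')$ from their first rows alone --- legitimate because the parametrisation is determined by the first row. Each identity is established for all $s\in\CC^*$ and all $\ba$ with $\a_1\neq 0$, and a Zariski-density argument then promotes it to an identity of polynomials in $\a_1,\ldots,\a_n$. Note that neither Remark \ref{rmk3.2} nor the structure equation (\ref{structure}) is needed.
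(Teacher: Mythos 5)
Your proof is correct and is exactly the paper's argument: the paper's (one-sentence) proof likewise derives (i) from closure of $\hU$ under left multiplication by its diagonal subgroup $\{\mathrm{diag}(\a_1,\a_1^{\weight_2},\ldots,\a_1^{\weight_n})\}$ and (ii) from closure under right multiplication by it. You have simply spelled out the details (identification of the products via their first rows, consistency on diagonal entries) that the paper leaves implicit.
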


\begin{proof}
The first (respectively second) part of the statement follows from the fact that $\hU$ is closed under multiplication by its subgroup
\[ \CC^*=\left \{ \left(\begin{array}{cccc} \a_1 & 0 & \ldots & 0 \\ 0 & \a_1^{\weight_2} & \ldots & 0 \\ \cdot & \cdot & \cdot & \cdot \\ 0 & \cdot & \cdot & \a_1^{\weight_n} \end{array} \right): \a_1 \in \CC^* \right \}\]
on the left (respectively right).
\end{proof}

\subsection{The construction}

For a vector of positive integers $\bk=(\weight_1,\ldots, \weight_n)$ we introduce the notation 
\[\symk=\CC^n \oplus \sym^{\weight_2}(\CC^n) \oplus \ldots \oplus \sym^{\weight_n}(\CC^n),\]
where $\sym^{s}(\CC^n)$ is the $s$th symmetric power of $\CC^n$. Any linear group action on $\CC^n$ induces an action on $\symk$.
 
The most straightforward way to find an algebraic description of the quotient $\GL(n)/\hU$ is to find a $\GL(n)$-module $W$ with a point $w \in W$ whose stabiliser is $\hU$. Then the orbit $\GL(n)\cdot w$ is isomorphic to $\GL(n)/\hU$ as a quasi-affine variety, and its closure $\overline{\GL(n)\cdot w}$ in $W$ is an
affine completion of $\GL(n)/\hU$, while its closure in a projective
completion of $W$ is a compactification of $\GL(n)/ \hU$.

\begin{theorem} 
\label{embed} Let $\hU=U \rtimes \CC^*$ be a $\CC^*$ extension of a unipotent subgroup $U$ of  $\SL(n)$ with positive weights $1=\weight_1 < \weight_2 \le \cdots \le \weight_n$ and  a polynomial presentation \eqref{presentation}. 
Fix the standard basis $\cale=\{e_1,\ldots, e_n\}$ of $\CC^n$ and define 
\begin{equation}\label{pn}
\zdis_n=[e_1 \wedge (e_2+e_1^{\weight_2}) \wedge \ldots \wedge (e_j + \sum_{i=2}^j p_{i,j}(e_1,\ldots,e_{j-1}))\wedge  \ldots \wedge (e_n + \sum_{i=2}^n p_{i,n}(e_1,\ldots,e_n))]
\end{equation}
$$ \in \grass_n(\symk)\subset \PP(\wedge^n\symk).$$
Then the stabiliser $\GL(n)_{\zdis_n}$ of $\zdis_n$ in $\GL(n)$ is $\hU$.
\end{theorem}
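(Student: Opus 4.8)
The plan is to translate the incidence condition in the Grassmannian into a system of polynomial identities and then analyse both inclusions. The key preliminary observation is that $\GL(n)$ preserves the grading $\symk=\bigoplus_i \sym^{\weight_i}(\CC^n)$, so each projection $\pi_i:\symk\to\sym^{\weight_i}(\CC^n)$ is $\GL(n)$-equivariant. Writing $v_1,\dots,v_n$ for the spanning vectors of $\zdis_n$ in \eqref{pn}, one has $\pi_1(v_j)=e_j$, since every other summand of $v_j$ lies in some $\sym^{\weight_i}(\CC^n)$ with $\weight_i\ge\weight_2>1$. Hence $\zdis_n$ meets the affine chart of $\PP(\wedge^n\symk)$ where the $\wedge^n\CC^n$-coordinate is nonzero, and $\pi_1$ restricts to an isomorphism $\zdis_n\xrightarrow{\sim}\CC^n$, $v_j\mapsto e_j$. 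Consequently $g\in\GL(n)$ stabilises $\zdis_n$ if and only if $g$ maps the subspace into itself, and applying the equivariant $\pi_1$ shows this is equivalent to
\begin{equation*}
g\cdot v_j=\sum_{k=1}^n g_{kj}\,v_k\qquad(1\le j\le n),
\end{equation*}
where $(g_{kj})$ is the matrix of $g$ on $\CC^n$. Equivalently, the linear map $\Theta:\CC^n\to\symk$, $e_j\mapsto v_j$, is $g$-equivariant for the standard action on $\CC^n$, so it suffices to show that the set of $g$ for which $\Theta$ is equivariant is exactly $\hU$.

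For the inclusion $\hU\subseteq\GL(n)_{\zdis_n}$ I would show $\Theta$ is a homomorphism of $\hU$-modules; since $\hU$ is connected this can be checked on $\Lie\hU=\CC\oplus\lieu$. On the $\CC^*$-factor it is the observation that each $v_j$ is a weight vector of weight $\weight_j$: by Proposition \ref{homogprop}(ii) the polynomial $p_{i,j}$ is weighted-homogeneous of weighted degree $\weight_j$, so the substitution $\a_k\mapsto e_k$ (where $e_k$ carries $\CC^*$-weight $\weight_k$) produces an element of $\CC^*$-weight $\weight_j$ in every summand of $v_j$, whence $\CC^*$ scales $v_j$ by $\a_1^{\weight_j}$ and preserves $\zdis_n$. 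On $\lieu$ one computes the derivation action: for $\xi\in\lieu$ one has $\xi e_1=0$ and $\xi e_m=\sum_{i<m}\xi_{im}e_i$, and $\xi$ acts on each $p_{i,j}(e_1,\dots,e_{j-1})$ as a derivation. Matching $\xi\cdot v_j$ with $\Theta(\xi e_j)=\sum_i \xi_{ij}v_i$ then reduces, summand by summand, to the structure relations \eqref{structure} together with the compatibility between the group entries $p_{i,j}$ and the Lie-algebra entries $q_{i,j}$.

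For the reverse inclusion $\GL(n)_{\zdis_n}\subseteq\hU$ I would start from the criterion above and first prove that any stabilising $g$ is upper triangular. Taking $j=1$ gives $\sum_{k\ge2}g_{k1}\,r_k=0$, where $r_k:=\sum_{i=2}^k p_{i,k}(e_1,\dots,e_{k-1})$; these vectors are linearly independent, because each $r_k$ contains the pure power $e_1^{\weight_k}$ coming from the diagonal term $p_{k,k}=\a_1^{\weight_k}$, and comparison of $\CC^*$-weights (with, inside a block of equal weights $\weight_k$, a secondary leading monomial, using that $p_{i,k}$ involves only $e_1,\dots,e_{k-1}$ by Remark \ref{rmk3.2}) forces $g_{k1}=0$ for $k\ge2$. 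Thus $g e_1=\a_1 e_1$ with $\a_1\ne0$. Proceeding by induction on the columns $j=2,\dots,n$ and using the triangularity already established, the $\sym^{\weight_j}$-component of the $j$th equation matches the pure power $(g e_1)^{\weight_j}=\a_1^{\weight_j}e_1^{\weight_j}$ on the left against $g_{jj}e_1^{\weight_j}$ on the right, forcing $g_{jj}=\a_1^{\weight_j}$; the remaining components force $g_{kj}=0$ for $k>j$ and $g_{ij}=p_{i,j}(\ba)$ for $2\le i<j$, where $\ba=(\a_1,g_{12},\dots,g_{1n})$. This identifies $g$ with $g_\ba\in\hU$.

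The main obstacle I anticipate is the forward step for $U$: verifying the $\lieu$-equivariance of $\Theta$ is exactly where one must use that the $p_{i,j}$ assemble into a genuine group (equivalently that the $q_{i,j}$ satisfy \eqref{structure}), rather than it being a formal consequence of the definitions — the derivation acting on the symmetric-power terms $p_{i,j}(e_\bullet)$ must reproduce precisely the lower vectors $v_i$ weighted by the matrix entries $\xi_{ij}$, and this matching is the whole point of the construction. A secondary technical point is the linear independence of the $r_k$ when several of the weights $\weight_k$ coincide, where the pure powers $e_1^{\weight_k}$ no longer separate the $r_k$ and one must track a finer leading term.
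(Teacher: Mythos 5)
Your reduction of the stabiliser condition is correct and genuinely different from (and cleaner than) the paper's argument. Since $\pi_1\colon\symk\to\CC^n$ is $\GL(n)$-equivariant and restricts to an isomorphism from the $n$-plane $\zdis_n$ onto $\CC^n$ sending $v_j\mapsto e_j$, a matrix $g$ preserves $\zdis_n$ if and only if $g\cdot v_j=\sum_k g_{kj}v_k$ for all $j$; from there everything becomes componentwise linear algebra in $\symk$. The paper instead works throughout in $\PP(\wedge^n\symk)$ with Pl\"ucker coordinates, extracting the same componentwise equations by applying a sequence of $\GL(n)$-equivariant projections of $\wedge^n\symk$ onto tensor-product summands (the maps $\pi$, $\rho$, $\xi$ in its proof), and runs an induction on the upper-left blocks $g^{\le m}$. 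Both arguments ultimately rest on the same two inputs: the group law of $\hU$, in the form of the identity (\ref{groupproperty}), $u(e_1,\dots,e_n)\cdot u(\ba)=u(u(\ba)e_1,\dots,u(\ba)e_n)$, whose $j$th column is exactly your equivariance statement $u(\ba)\cdot v_j=\sum_k p_{k,j}(\ba)\,v_k$; and the nonvanishing diagonal terms $p_{k,k}(e_1,\dots,e_{k-1})=e_1^{\weight_k}$, which drive the triangularity argument. Your route buys a shorter and more transparent derivation of the componentwise equations, at the cost of having to justify the passage through $\pi_1$; the paper's stays entirely inside the Pl\"ucker embedding.

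Two points need tightening. First, for $\hU\subseteq\GL(n)_{\zdis_n}$ the infinitesimal route is more roundabout than necessary, and the claim that the $\lieu$-equivariance ``reduces to \eqref{structure}'' understates what is used: \eqref{structure} only records the shape of the linear forms $q_{i,j}$, whereas the identity you actually need is that the $p_{i,j}$ close under composition, i.e.\ the group law of $\hU$; it is simpler to quote (\ref{groupproperty}) directly, which gives $u(\ba)v_j=\sum_k p_{k,j}(\ba)v_k$ for group elements with no differentiation. Second, your worry about the linear independence of the $r_k$ when weights repeat rests on a misreading of $\symk$: the summands $\sym^{\weight_2}\CC^n,\dots,\sym^{\weight_n}\CC^n$ are indexed by the position $i=2,\dots,n$, so coinciding weights give \emph{distinct} copies of the same symmetric power (cf.\ the decomposition $\symk=\CC^n\oplus(\lieu_2\otimes\sym^{\tilde\weight_2}\CC^n)\oplus\cdots$ in \S\ref{sec:construction}), and $p_{i,k}(e_1,\dots,e_{k-1})$ lives in the $i$th copy. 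Hence $r_k$ has the nonzero component $e_1^{\weight_k}$ in the $k$th copy and no component in any copy of index greater than $k$, so the $r_k$ are independent for trivial triangularity reasons and no finer leading-monomial analysis is needed. This reading is not optional: if one merged coinciding copies one could have $r_k=r_{k'}$ (for $n=3$ and $\weight=(1,2,2)$ homogeneity forces $p_{2,3}=0$ and then $r_2=r_3=e_1^2$), the stabiliser would then be strictly larger than $\hU$, and no choice of secondary leading term could rescue the argument.
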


\begin{corollary}\label{phi}
The map $\phi_n:\GL(n) \to \PP[\wedge^n \symk]$ which sends a matrix with column vectors $v_1,\ldots, v_n$ to the point 
\begin{equation}\label{phidef}
(v_1,\ldots, v_n) \mapsto [v_1 \wedge (v_2+v_1^{\weight_2}) \wedge \ldots \wedge (v_n + \sum_{i=2}^n p_{i,n}(v_1,\ldots,v_n))]
\end{equation}
is invariant under right multiplication of $\hU$ on $\GL(n)$ and $\GL(n)$-equivariant with respect to left multiplication on $\GL(n)$ and the induced action on $\PP[\wedge^n\symk]$. It therefore defines a $\GL(n)$-equivariant embedding 
\begin{equation}\label{embedding}
\phi_n: \GL(n)/\hU \hookrightarrow \grass_n(\symk).
\end{equation}
\end{corollary}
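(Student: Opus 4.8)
The plan is to identify $\phi_n$ with the orbit map $g \mapsto g\cdot\zdis_n$ and then read off every assertion from Theorem \ref{embed}. The first task is to make the substitution of column vectors into the entries $p_{i,j}$ precise. For a polynomial $p(\a_1,\ldots,\a_n)$ that is homogeneous of ordinary degree $d$, I would write $p(v_1,\ldots,v_n)$ for the element of $\sym^d(\CC^n)$ obtained by replacing each monomial $\a_{l_1}\cdots\a_{l_d}$ by the symmetric product $v_{l_1}\cdots v_{l_d}$. By Proposition \ref{homogprop}(i) the entry $p_{i,j}$ is homogeneous of ordinary degree $\weight_i$, so $p_{i,j}(v_1,\ldots,v_n)$ lands in the $i$-th summand $\sym^{\weight_i}(\CC^n)$ of $\symk$ (with the convention $p_{1,j}=\a_j$, giving $v_j$ in the first summand). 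The crucial point is that this substitution is natural in $\GL(n)$: since $\GL(n)$ acts on the symmetric algebra $\sym^\bullet(\CC^n)$ by algebra automorphisms, one has $g\cdot(u_1\cdots u_d)=(gu_1)\cdots(gu_d)$, and therefore $p_{i,j}(gv_1,\ldots,gv_n)=g\cdot p_{i,j}(v_1,\ldots,v_n)$ for every $g\in\GL(n)$.

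Granting this, the $j$-th wedge factor $w_j=v_j+\sum_{i=2}^j p_{i,j}(v_1,\ldots,v_n)$ satisfies $w_j(gv_1,\ldots,gv_n)=g\cdot w_j(v_1,\ldots,v_n)$, where $g$ acts diagonally on $\symk=\bigoplus_i\sym^{\weight_i}(\CC^n)$. Wedging the $n$ factors then shows that, already at the level of $\wedge^n\symk$,
\[ \phi_n(g'g)=(\wedge^n g')\,\phi_n(g) \qquad\text{for all } g,g'\in\GL(n). \]
Taking $g$ to be the identity and using Remark \ref{rmk3.2} to replace $p_{i,j}(e_1,\ldots,e_n)$ by $p_{i,j}(e_1,\ldots,e_{j-1})$, the columns $e_1,\ldots,e_n$ give exactly the vector underlying $\zdis_n$ as written in \eqref{pn}, so $\phi_n(\mathrm{Id})=\tilde\zdis_n$. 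Hence $\phi_n(g)=(\wedge^n g)\,\tilde\zdis_n$; since $\wedge^n g$ is invertible and $\tilde\zdis_n\neq 0$, the image is a nonzero decomposable vector, so $\phi_n$ genuinely lands in $\grass_n(\symk)\subset\PP(\wedge^n\symk)$ and coincides with the orbit map $g\mapsto g\cdot\zdis_n$. This is precisely the claimed $\GL(n)$-equivariance under left multiplication.

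Right-invariance and the embedding statement are then formal consequences of Theorem \ref{embed}. For $h\in\hU$ that theorem gives $h\cdot\zdis_n=\zdis_n$, so $\phi_n(gh)=g\cdot(h\cdot\zdis_n)=g\cdot\zdis_n=\phi_n(g)$; thus $\phi_n$ is constant on right $\hU$-cosets and descends to a map $\bar\phi_n\colon\GL(n)/\hU\to\grass_n(\symk)$. Injectivity is immediate, since $\phi_n(g_1)=\phi_n(g_2)$ holds if and only if $g_2^{-1}g_1$ fixes $\zdis_n$, i.e. if and only if $g_2^{-1}g_1\in\GL(n)_{\zdis_n}=\hU$, i.e. if and only if $g_1\hU=g_2\hU$. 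Finally $\bar\phi_n$ is the orbit map onto the locally closed orbit $\GL(n)\cdot\zdis_n$, and over $\CC$ such a map induces an isomorphism $\GL(n)/\GL(n)_{\zdis_n}\xrightarrow{\ \sim\ }\GL(n)\cdot\zdis_n$, so $\bar\phi_n$ is an embedding.

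The only step carrying genuine content is the equivariance established in the first two paragraphs; everything afterwards is the standard orbit–stabiliser dictionary built on Theorem \ref{embed}. I expect the one point needing care to be the precise definition of the substitution $\a_s\mapsto v_s$ into the $p_{i,j}$ as a map into $\symk$ together with its naturality in $\GL(n)$: this is exactly where Proposition \ref{homogprop}(i) (placing $p_{i,j}(v_1,\ldots,v_n)$ in the summand $\sym^{\weight_i}(\CC^n)$) and the multiplicativity of the $\GL(n)$-action on $\sym^\bullet(\CC^n)$ enter, and I anticipate no further obstacle.
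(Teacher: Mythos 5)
Your proposal is correct and matches the paper's (implicit) argument: the paper states this corollary as an immediate consequence of Theorem \ref{embed}, reading $\phi_n$ as the orbit map $g\mapsto g\cdot\zdis_n$ so that right $\hU$-invariance, injectivity on cosets and left equivariance all follow from the stabiliser computation $\GL(n)_{\zdis_n}=\hU$. Your extra care over the substitution $\a_s\mapsto v_s$ and its $\GL(n)$-naturality is exactly the point the paper leaves tacit (it is used silently in the proof of Theorem \ref{embed} when forming $p_{i,j}(ge_1,\ldots,ge_n)\in\sym^{\weight_i}(\CC^n)$), so there is nothing to add.
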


\begin{rem} \label{afemb}
Note that the image of the embedding 
$\phi_n:\GL(n) \to \PP[\wedge^n \symk]$ lies in the open affine subset defined by the
non-vanishing of the
coordinate in $\wedge^n \symk$ corresponding to the one-dimensional summand
$\wedge^n \CC^n$ of $\wedge^n \symk$ spanned by $e_1 \wedge \cdots \wedge e_n$.  
\end{rem}

\begin{proof}[Proof of Theorem \ref{embed}]
First we prove that  $\hU$ is contained in the stabiliser 
$\GL(n)_{\zdis_n}$.
For $(\a_1,\ldots, \a_n)\in \CC^* \times \CC^{n-1}$ let 
\[u(\a_1, \ldots, \a_n) =\left(\begin{array}{ccccc}\a_1 & \a_2 & \a_3 & \ldots & \a_n \\ 0 & \a_1^{\weight_2} & p_{2,3}(\ba) & \ldots & p_{2,n}(\ba) \\ 0 & 0 & \a_1^{\weight_3} & \ldots & p_{3,n}(\ba) \\ \cdot & \cdot & \cdot & \cdot &\cdot \\ 0 & 0 & 0 & 0 & \a_1^{\weight_n}  \end{array}\right)\in \hU
\]
denote the element of $\hU$ determined by the parameters $(\alpha_1, \ldots, \alpha_n)$ and for an $n$-tuple of vectors $\bv=(v_1,\ldots, v_n)\in (\CC^n)^{\oplus n}$ forming the columns of the $n \times n$-matrix $A\in \GL(n)$ we similarly define the matrix
\[u(A)=u(v_1, \ldots, v_n) =\left(\begin{array}{ccccc}v_1 & v_2 & v_3 & \ldots & v_n \\ 0 & v_1^{\weight_2} & p_{2,3}(\bv) & \ldots & p_{2,n}(\bv) \\ 0 & 0 & v_1^{\weight_3} & \ldots & p_{3,n}(\bv) \\ \cdot & \cdot & \cdot & \cdot &\cdot \\ 0 & 0 & 0 & 0 & v_1^{\weight_n}  \end{array}\right)\in M_{n \times n}(\symk)
\]
with entries in $\symk$. Then the map $\phi$ in \eqref{phidef} is the composition
\[\phi(v_1,\ldots, v_n)=(u\circ \pi)(v_1,\ldots, v_n)\]
where the rational map $\pi:M_{n \times n}(\symk) \dasharrow \grass_n(\symk)$ 
restricts to a morphism on an open subset of $M_{n \times n}(\symk)$ containing
the image of $u:\GL(n) \to M_{n \times n}(\symk)$.

Now, since $\hU$ is a group, the $(i,j)$ entry of the product of two elements is 
 the polynomial $p_{i,j}$ in the entries of the first row of the product; that is,
\[u(\beta_1,\ldots, \beta_n)u(\a_1,\ldots, \a_n)=u(\a_1\beta_1,\a_1^{\weight_2}\beta_2+\beta_1\a_2,
\ldots,\sum_{m=1}^n p_{m,n}(\a_1,\ldots \a_n)\beta_m)\]
for any $\a_1, \ldots, \a_n,\b_1, \ldots, \b_n$.
This implies that  
\[u(e_1,\ldots, e_n)\cdot u(\a_1,\ldots, \a_n)=u(\a_1e_1,\a_1^{\weight_2}e_2+\a_2e_1,
\ldots,\sum_{m=1}^n p_{m,n}(\a_1,\ldots \a_n)e_m)\]
where $\{e_1, \ldots, e_n\}$ is the standard basis for $\CC^n$. However, the $n$-tuple $$(\a_1e_1,\a_1^{\weight_2}e_2+\a_2e_1,
\ldots,\sum_{m=1}^n p_{m,n}(\a_1,\ldots \a_n)e_m)) \in (\CC^n)^{\oplus n}$$ on the right hand side forms the columns of the matrix $ 
u(\alpha_1,\ldots,\a_n)$, so we arrive at
\begin{equation}\label{groupproperty}
u(e_1,\ldots, e_n)\cdot u(\a_1,\ldots, \a_n)=u(u(\a_1,\ldots, \a_n)\cdot e_1,\ldots , u(\a_1,\ldots, \a_n)\cdot e_n).
\end{equation}
Since $u(\a_1,\ldots, \a_n)$ lies in the standard Borel subgroup $B_n$
of $\GL(n)$, the matrices $u(e_1,\ldots, e_n)$ and $u(e_1,\ldots, e_n)\cdot u(\a_1,\ldots, \a_n)$ represent the same element in $\grass_n(\symk)$; that is, 
in $\grass_n(\symk)$ we have
\begin{multline}\nonumber
\zdis_n=\pi(u(e_1,\ldots ,e_n))=\pi(u(e_1,\ldots, e_n)\cdot u(\a_1,\ldots, \a_n))=\\
\pi(u(u(\a_1,\ldots, \a_n)\cdot e_1,\ldots , u(\a_1,\ldots, \a_n)\cdot e_n)
\end{multline}
which completes the proof that $\hU \subseteq \GL(n)_{\zdis_n}$.
 
It remains to prove that $\GL(n)_{\zdis_n}\subseteq \hU$. 
Suppose that $g = (g_{ij})_{i,j=1}^n \in \GL(n)_{\zdis_n}$; we want to show that 
$g \in \hU$. For $1 \leq m \leq n$ let 
$$g^{\leq m} = (g_{ij})_{i,j=1}^m \in \GL(m)$$
be the upper left $m\times m$ block of $g$. 
Recall that by Remark \ref{rmk3.2} if $j \geq i \geq 2$ then $p_{i,j}(\a_1,\ldots,\a_n) = p_{i,j}(\a, \ldots, \a_{j-1})$
depends only on $\a_1,\ldots,\a_{j-1}$.
We will prove by induction on $m$
that
$$g^{\leq m} = u(g_{11},g_{12}, \ldots, g_{1m})$$
This is clear for $m=1$ since $g^{\leq 1} =(g_{11})=u(g_{11})$.
Suppose that it is true for some $m<n$. Since $g \in  \GL(n)_{\zdis_n}$ the
Pl\"{u}cker coordinates
$$ 
 e_1 \wedge (e_2+e_1^{\weight_2}) \wedge \ldots \wedge \sum_{i=1}^n p_{i,n}(e_1,\ldots, e_n)  $$
of $\zdis_n$ agree up to multiplication by a nonzero scalar  with the
Pl\"{u}cker coordinates
$$ 
 g e_1 \wedge (g e_2+g e_1^{\weight_2}) \wedge \ldots \wedge \sum_{i=1}^n p_{i,n}(g e_1,\ldots, g e_n)  $$
of $g \zdis_n$, where 
$g e_j = \sum_{s=1}^n g_{sj}e_s$ and 
$p_{i,j}(g e_1,\ldots,g e_n) \in \sym^{\weight_i}(\CC^n) \subseteq \symk$. By the inductive hypothesis we have
$$g_{ij} = p_{i,j}(g_{11}, \ldots, g_{1j})$$
for $1 \leq i \leq m$ and $1 \leq j \leq m$, so with our previous notation 
\[g^{\le m}=u(g_{11},\ldots, g_{1m})\in \hU\]
holds, and therefore $g^{\le m}$ fixes $\zdis_m$; thus \[\zdis_m=\pi(u(e_1,\ldots, e_m))=\pi(u(g^{\le m}e_1,\ldots ,g^{\le m}e_m)).\]
In coordinates this means that
$$ 
\cdot e_1 \wedge (e_2+e_1^{\weight_2}) \wedge \ldots \wedge \sum_{i=1}^m p_{i,m}(e_1,\ldots, e_m)$$
agrees up to multiplication by a nonzero scalar with  
$$ g e_1 \wedge (g e_2+g e_1^{\weight_2}) \wedge \ldots \wedge \sum_{i=1}^m p_{i,m}(g e_1,\ldots, g e_m).  $$
Therefore 
$$ 
 e_1 \wedge (e_2+e_1^{\weight_2}) \wedge \ldots \wedge \sum_{i=1}^m p_{i,m}(e_1,\ldots, e_m) \wedge  \sum_{i=1}^{m+1} p_{i,m+1}(e_1,\ldots, e_{m+1})
\wedge \ldots \wedge \sum_{i=1}^n p_{i,n}(e_1,\ldots, e_n)$$ and $$
 e_1 \wedge ( e_2+ e_1^{\weight_2})  \wedge \ldots \wedge \sum_{i=1}^m p_{i,m}(e_1,\ldots, e_m) \wedge  \sum_{i=1}^{m+1} p_{i,m+1}(ge_1,\ldots, g e_{m+1})\wedge \ldots \wedge \sum_{i=1}^n p_{i,n}(g e_1,\ldots, g e_n)$$
agree up to multiplication by a nonzero scalar.
Applying the identification  
\begin{equation}\label{wedge}
\bigwedge^n (\oplus_{i=1}^tV_i)=\bigoplus_{p_1+\ldots+p_t=n}\left(
\wedge^{p_1}V_1 \otimes \ldots \otimes \wedge^{p_t}V_t\right),
\end{equation}
with $V_1=\bigwedge^{m+1}(\CC^n \oplus \sym^{\weight_2}\CC^n \oplus \cdots\oplus \sym^{\weight_{m+1}}\CC^n)$ and $$V_2=\sym^{\weight_{m+2}}\CC^n,\ldots, V_{n-m}=\sym^{\weight_n}\CC^n$$ we get a natural $\GL(n)$-equivariant projection to the direct summand  corresponding to $p_1=m+1,p_2=\ldots =p_{n-m}=1$ given by 
$$\pi: \bigwedge^n \symk \to \bigwedge^{m+1}(\CC^n \oplus \sym^{\weight_2}\CC^n \oplus \cdots\oplus \sym^{\weight_{m+1}}\CC^n) \otimes \sym^{\weight_{m+2}} \otimes \cdots \otimes \sym^{\weight_n}\CC^n$$
which takes $e_1 \wedge (e_2+e_1^{\weight_2}) \wedge \ldots \wedge \sum_{i=1}^n p_{i,n}(e_1,\ldots, e_n)$ to
$$e_1 \wedge (e_2+e_1^{\weight_2}) \wedge \ldots \wedge \sum_{i=1}^m p_{i,m}(e_1,\ldots, e_m) \wedge \sum_{i=1}^{m+1} p_{i,m+1}(e_1,\ldots, e_{m+1}) \otimes e_1^{\weight_{m+2}} \otimes \cdots \otimes
e_1^{\weight_n}.$$
This must agree up to multiplication by a nonzero scalar with the projection
\begin{multline}\nonumber
\pi\left(g e_1 \wedge (g e_2+g e_1^{\weight_2}) \wedge \ldots \wedge \sum_{i=1}^m p_{i,m}(e_1,\ldots, e_m) \wedge \sum_{i=1}^n p_{i,n}(g e_1,\ldots, g e_n)\right)=\\
e_1 \wedge (e_2+e_1^{\weight_2}) \wedge \ldots \wedge \sum_{i=1}^{m+1} p_{i,m+1}(g e_1,\ldots, g e_{m+1}) \otimes {q_{m+2}} \otimes \cdots \otimes
{q_n}
\end{multline}
for some $q_j \in \sym^{\weight_j}\CC^n$ for ${m+2}\leq j \leq n$. It follows from this
that
\begin{multline}\label{transformed}
\lambda e_1 \wedge (e_2+e_1^{\weight_2}) \wedge \ldots \wedge \sum_{i=1}^{m+1} p_{i,m+1}(e_1,\ldots, e_{m+1})=\\
e_1 \wedge (e_2+e_1^{\weight_2}) \wedge \ldots \wedge \sum_{i=1}^{m+1} p_{i,m+1}(g e_1,\ldots, g e_{m+1}),
\end{multline}
for some nonzero scalar $\lambda$. 


Now, $g^{\le m}=u(g_{11},\ldots, g_{1m})$ and therefore by \eqref{groupproperty}
\[u(g^{\le m}e_1,\ldots, g^{\le m}e_m)=u(e_1,\ldots, e_n)\cdot u(g_{11},\ldots, g_{1m}).\]
But if $m+1 \geq i\ge 2$ then $p_{i,m+1}(\a_1,\ldots,\a_{m})$ is a polynomial in $\a_1,\ldots, \a_{m}$, and does not depend on $\a_{m+1},\ldots, \a_n$. Therefore 
\begin{multline}
p_{i,m+1}(ge_1,\ldots, ge_{m})=\sum_{s=2}^{n}p_{is}(e_1,\ldots, e_m) p_{s,m+1}
(g_{11},\ldots,g_{1,m+1})\text{ for } 2\le i \le m+1 
\end{multline}
and
\[p_{1,m+1}(ge_1,\ldots, ge_{m+1})=ge_{m+1}=\sum_{i=1}^n g_{i,m+1}e_i. \]
Substituting this into \eqref{transformed} we arrive at the equation
\begin{multline}\label{transformed2}
\lambda \cdot \left(e_1 \wedge (e_2+e_1^{\weight_2}) \wedge \ldots \wedge \sum_{i=1}^{m+1} p_{i,m+1}(e_1,\ldots, e_{m+1})\right)=\\
=e_1 \wedge (e_2+e_1^{\weight_2}) \wedge \ldots \wedge \left(\sum_{i=2}^{m+1} \sum_{s=1}^{n}p_{s,m+1}
(g_{11},\ldots,g_{1,m+1})p_{is}(e_1,\ldots, e_{m+1})+\sum_{s=2}^n g_{s,m+1}e_i\right).
\end{multline}

There is another $\GL(n)$-equivariant projection to the direct summand corresponding to $V_i=\sym^{\weight_i}\CC^n$ and $p_1=2,p_2=\ldots =p_{m}=1$ in \eqref{wedge}, given by
\[
\rho:\bigwedge^{m+1}(\CC^n \oplus \sym^{\weight_2}\CC^n \oplus \cdots\oplus \sym^{\weight_{m+1}}\CC^n) \to \\
 \wedge^2\CC^n \otimes \sym^{\weight_2}\CC^n \otimes \cdots \otimes \sym^{\weight_m}\CC^n
\]
which takes the left hand side of \eqref{transformed2} to 
\[\lambda (e_1\wedge e_{m+1}) \otimes  e_1^{\weight_2} \otimes  \ldots \otimes e_1^{\weight_m}\]
and the right hand side to 
\[\left(e_1 \wedge (\Sigma_{s=2}^m(p_{s,m+1}
(g_{11},\ldots,g_{1,m+1})-g_{s,m+1})e_s+g_{m+1,m+1}e_{m+1})\right) \otimes e_1^{\weight_2} \otimes \ldots \otimes e_1^{\weight_m} . \] 
These two are equal, so we obtain 
\begin{equation}\label{lambda1}
g_{s,m+1}=p_{s,m+1}(g_{11},\ldots, g_{1,m+1}) \text { for } s \neq 1,m+1
\,\,\, \mbox{ and } \,\,\,
\lambda=g_{m+1,m+1}.
\end{equation}
Note that the right hand side of \eqref{transformed2} is independent of $b_{1,m+1}$, which can be chosen arbitrarily, as we expect.  
Finally, for $s=m+1$, we take the third $\GL(n)$-equivariant projection corresponding to $V_i=\sym^{\weight_i}\CC^n$ and $p_1=\ldots =p_n=1$ in \eqref{wedge},
given by
\begin{multline}\nonumber
\xi:\bigwedge^{m+1}(\sym^{\weight_1}\CC^n \oplus \sym^{\weight_2}\CC^n \oplus \cdots\oplus \sym^{\weight_{m+1}}\CC^n) \to \\
 \CC^n \otimes \sym^{\weight_2}\CC^n \otimes \cdots \otimes \sym^{\weight_m}\CC^n \otimes \sym^{\weight_{m+1}}\CC^n,
\end{multline}
and project the equation \eqref{transformed2}. We get
\[\lambda\cdot e_1^{\weight_1} \otimes  e_1^{\weight_2} \otimes  \ldots \otimes e_1^{\weight_{m+1}}=e_1^{\weight_1} \otimes  e_1^{\weight_2} \otimes  \ldots \otimes e_1^{\weight_m} \otimes p_{m+1,m+1}(b_{11},\ldots ,b_{1,m+1})e_1^{\weight_{m+1}}\]
which gives $\lambda=p_{m+1,m+1}(b_{11},\ldots ,b_{1,m+1})$. From \eqref{lambda1} we get $$g_{m+1,m+1}=p_{m+1,m+1}(b_{11},\ldots ,b_{1,m+1})$$ and Theorem \ref{embed} is proved. 
\end{proof}

\subsection{Changing the basis of $\lieu$}

We observed in Proposition \ref{homogprop} 
that the left-right multiplication action of the subgroup $\CC^*$ of
 $\hU$ implies that the polynomial entry $p_{i,j}(\a)$ of an element of $\hU$
with parameters $\a$ in the first row has degree $i$ and weighted degree $\weight_j$
in $\a$. Similarly we have a bigrading on $\symk$ as follows: 
 the Lie algebra $\lieu=\mathrm{Lie}(U)$ decomposes into eigenspaces for the adjoint action of  $\Lie \CC^*=\CC z = \lieu_1$ as
\[\lieu=\oplus_{i=1}^r \lieu_i,\]
where $z \in \lieu_1 \setminus \{0\}$ and 
\[\lieu_i=\{x \in \lieu:[x,z]=(\tilde{\weight}_i-1) x\}\] 
if $\tilde{\weight}_1,\ldots, \tilde{\weight}_r$ are the different weights among $\weight_1,\ldots, \weight_n$.
This induces a decomposition 
\[\symk=\CC^n \oplus (\lieu_2 \otimes \sym^{\tilde{\weight}_2}\CC^n) \oplus \ldots \oplus (\lieu_r \otimes \sym^{\tilde{\weight}_r}\CC^n)\]
of $\symk$.
Let
$\sym^{a}_{b}\CC^n=\oplus_{\weight_{i_1}+\ldots +\weight_{i_a}=b} (\CC e_{i_1}\ldots e_{i_a})\subseteq \sym^{a}\CC^n$  
 and define
\[\symkd =\oplus_{i,j=1}^r (\mathfrak{u}_i \otimes \mathfrak{u}_j \otimes \sym^{\tilde{\weight}_i}_{\tilde{\weight}_j}\CC^n).\]
The image of the embedding $\phi_n$ of $\GL(n)/\hU$ sits in the subset
$\grass_n( \symkd)$ of $\grass_n( \symk)$, and the group 
\[\widetilde{\GL}(\lieu)=\CC^* \times \GL(\lieu_2) \times \ldots \times \GL(\lieu_r) \subset \GL(\hat{\mathfrak{u}})\]
acts on $\symkd$ via conjugation 
and thus on $\grass(n,\symkd)$. 
If $g \in \widetilde{\GL}(\lieu)$ 
then the subgroup 
\[ g^{-1}\hU g \]
of $\GL(n)$ with Lie subalgebra $g^{-1}\lieu g
$ has the same form as $\hU$ and so we can compare the corresponding
embeddings $\phi_n$ of $\GL(n)/\hU$ and $\GL(n)/g^{-1}\hU g$ in   $\grass(n,\symkd)$; let us denote these by $\phi_{\hU}$ and $\phi_{g^{-1}\hU g}$.
The linear forms in the first row of $g^{-1}\lieu g$ (and the same linear forms in the first row of $g^{-1} \hU g$) are linearly independent, and give parameters $b_1,\ldots, b_n$ for the group and its Lie algebra. The corresponding embedding is then $\phi_{g^{-1}\hU g}$, and we have 

\begin{prop}\label{changebasis}
A linear change of basis of $\hat{\lieu}$ by any element of $\widetilde{\GL}(\lieu)$ does not change the closure of the image of the embedding $\phi_{\hU}$ of
$\GL(n)/\hU$ into the Grassmannian $\grass(n,\symkd)$ up to isomorphism.
\end{prop}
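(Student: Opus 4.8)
The plan is to produce an automorphism of $\grass_n(\symkd)$ that carries $\imm\phi_{\hU}$ onto $\imm\phi_{g^{-1}\hU g}$; since an automorphism of the ambient variety takes closures to closures, this identifies the two orbit closures up to isomorphism. Write $H'=g^{-1}\hU g$. By hypothesis $H'$ is again of the form \eqref{presentation} in the parameters $b_1,\dots,b_n$ read off from its first row, so Theorem \ref{embed} applies to $H'$ and yields a base point $\zdis_n^{H'}\in\grass_n(\symkd)$ with $\GL(n)$-stabiliser exactly $H'$ and with $\GL(n)$-orbit equal to $\imm\phi_{H'}=\imm\phi_{g^{-1}\hU g}$; likewise $\imm\phi_{\hU}=\GL(n)\cdot\zdis_n^{\hU}$, where $\zdis_n^{\hU}=\zdis_n$ is the base point of \eqref{pn}. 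The natural candidate for the automorphism is $g$ itself: since $g\in\widetilde{\GL}(\lieu)$ preserves the grading of $\hat\lieu\cong\CC^n$ it preserves each graded piece, hence preserves $\symkd$ and acts as an automorphism of $\grass_n(\symkd)$.

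The heart of the matter is the identity
\[ g^{-1}\cdot\zdis_n^{\hU}=\zdis_n^{H'} \]
in $\grass_n(\symkd)$, where $g^{-1}$ acts through its induced symmetric-power action on $\symkd$. Granting it, the point $g^{-1}\cdot\zdis_n^{\hU}$ lies in the orbit $\imm\phi_{\hU}=\GL(n)\cdot\zdis_n^{\hU}$, so $\zdis_n^{H'}$ lies in this orbit too, whence $\imm\phi_{g^{-1}\hU g}=\GL(n)\cdot\zdis_n^{H'}=\GL(n)\cdot\zdis_n^{\hU}=\imm\phi_{\hU}$; the two orbits, and therefore their closures, literally coincide. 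Even if one only checks the weaker statement that $g$ carries $\imm\phi_{\hU}$ onto $\imm\phi_{g^{-1}\hU g}$, the conclusion that the two closures are isomorphic follows at once, since $g$ acts as an automorphism of $\grass_n(\symkd)$.

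To prove the displayed identity I would argue in the affine chart of Remark \ref{afemb}, where the coordinate on $\wedge^n\CC^n$ is nonzero. There $\zdis_n^{\hU}$ is the graph of the linear map $T_{\hU}\colon\CC^n\to\bigoplus_{i\ge 2}\sym^{\weight_i}(\CC^n)$ sending $e_j\mapsto\sum_{i\ge 2}p_{i,j}(e_1,\dots,e_{j-1})$, and $\zdis_n^{H'}$ is the graph of the analogous map $T_{H'}$ built from the defining polynomials of $H'$. Applying $g^{-1}$ to the graph of $T_{\hU}$ produces the graph of $g^{-1}\circ T_{\hU}\circ g$, so the identity is equivalent to $T_{H'}=g^{-1}\circ T_{\hU}\circ g$; that is, to the assertion that conjugating $\hU$ by $g$ transforms its defining polynomials $p_{i,j}$ exactly as the induced action of $g$ on $\symkd$ does. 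I expect this compatibility of structure constants under conjugation to be the main obstacle. It should be extracted from the multiplicativity identity \eqref{groupproperty}, together with Remark \ref{rmk3.2}, which says that $p_{i,j}$ depends only on $\a_1,\dots,\a_{j-1}$, and the weighted-homogeneity of Proposition \ref{homogprop}; these ensure that conjugation by the grading-preserving $g$ keeps the presentation inside the class \eqref{presentation} and matches the two descriptions of the transformed entries. Once this is in place the remainder of the argument is formal.
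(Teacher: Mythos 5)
There is a genuine gap: the identity $g^{-1}\cdot\zdis_n^{\hU}=\zdis_n^{H'}$ at the heart of your argument is false, and so is the fallback ``weaker statement''. Take $n=3$, $\hU=\GG_3$ (weights $(1,2,3)$, $p_{2,3}(\ba)=2\a_1\a_2$, $p_{3,3}(\ba)=\a_1^3$) and $g=\diag(1,1,c)\in\widetilde{\GL}(\lieu)$ with $c\neq 1$. Then $H'=g^{-1}\GG_3 g$ has first-row parameters $(b_1,b_2,b_3)=(\a_1,\a_2,c\a_3)$ and $(2,3)$-entry $2cb_1b_2$, so $\zdis_3^{H'}=[e_1\wedge(e_2+e_1^2)\wedge(e_3+2c\,e_1e_2+e_1^3)]$, whereas $g^{-1}\cdot\zdis_3^{\hU}=[e_1\wedge(e_2+e_1^2)\wedge(c^{-1}e_3+2e_1e_2+e_1^3)]$ is the span of $e_1$, $e_2+e_1^2$ and $e_3+2c\,e_1e_2+c\,e_1^3$. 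The $e_1^3$-coefficients disagree, i.e.\ $T_{H'}\neq g^{-1}\circ T_{\hU}\circ g$: the functorial action of $g$ on $\sym^{\weight_j}\CC^n$ coming from $\CC^n\cong\hat{\lieu}$ is not how conjugation transforms the entries $p_{i,j}$, because conjugation also acts through the ``row index'' $i$ (the $\lieu_i$ in which $p_{i,j}$ lives), not only through the arguments. The fallback fails for a structural reason: $\imm\phi_{\hU}=\GL(n)\cdot\zdis_n^{\hU}$ is a single $\GL(n)$-orbit, so the $\GL(n)$-action of $g$ maps it onto itself, and your approach could only ever prove $\imm\phi_{g^{-1}\hU g}=\imm\phi_{\hU}$ as subsets. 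That is false in the example: if $\mathrm{span}(v_1,v_2+v_1^2,v_3+2v_1v_2+v_1^3)$ equalled the graph of $T_{H'}$ one would be forced to take $v_1=\lambda e_1$ and $v_2=a_1e_1+\lambda^2 e_2$, and then matching the $e_1e_2$- and $e_1^3$-coefficients of $T_{H'}(v_3)=2v_1v_2+v_1^3$ gives $b_3c=\lambda^3=b_3$, hence $c=1$.

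The paper's proof uses a different automorphism of $\grass(n,\symkd)$, namely $\mathrm{conj}(g)\circ(g_{11}\cdot g^{-1})$: multiplication by the scalar $g_{11}$ and by $g^{-1}$ on $\CC^n$ (hence on the $\sym^{\tilde{\weight}_i}_{\tilde{\weight}_j}\CC^n$ factors), combined with the conjugation action of $g$ on the extra tensor factors $\lieu_i\otimes\lieu_j$ of $\symkd$. This is precisely why the proposition is stated for $\symkd$ rather than $\symk$: the $\lieu_i\otimes\lieu_j$ factors are what absorb the discrepancy your computation runs into, and the claimed isomorphism of closures comes from the commutativity of the resulting square, not from an identification of the two orbits inside the Grassmannian. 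To repair your argument you would need to replace ``$g$ itself'' by this twisted map and then carry out essentially the compatibility check you correctly identified as the main obstacle.
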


\begin{proof}
This follows from the commutativity of the diagram
\begin{diagram}[LaTeXeqno,labelstyle=\textstyle]\label{diagram}
\GL(n)/\hU & \rInto^{\phi_{\hU}} & \grass(n,\symkd) \\
\dTo_{\mathrm{conj(g)}} & & \dTo_{\mathrm{conj}(g)\circ (g_{11}\cdot g^{-1})}\\
\GL(n)/g^{-1}\hU g & \rInto^{\phi_{g^{-1}\hU g}} & \grass(n,\symkd),
\end{diagram}
where 
\begin{enumerate}
\item  the left vertical $\mathrm{conj}(g)$ is the conjugation action sending the coset $\hU h\in \GL(n)/\hU$ to $(g^{-1}\hU g)(g^{-1}hg)=g^{-1}\hU hg \in \GL(n)/g^{-1}\hU g$; 
\item  the right vertical map is the composition of the multiplication by the scalar $g_{11}$ and the matrix $g^{-1}$ on $\CC^n$, and conjugation with $g \in \widetilde{\GL}(\lieu)$ on $\symk$.
\end{enumerate}
\end{proof}

\section{Singularities, jet differentials and curvilinear Hilbert schemes}\label{sec:jetdifferentials}

In this section we will
study an important example of a group of the form $\hU$ and its projective embedding $\phi_{\hU}:\GL(n)/\hU \hookrightarrow \grass_n(\symk)$ given by
Theorem \ref{embed} whose image
is contained in the affine open subset of the Grassmannian $\grass_n(\symk)$
where the coordinate corresponding to $\wedge^n \CC^n$ is nonzero.
We will see that here the codimension-$2$ property does not hold. Nonetheless in the next section we will see that a modification of this embedding can be used to find an affine embedding of $\SL(n)/U \rtimes F_{(M)}$ (where $U \rtimes F_{(M)}$ is an extension of $U$ by a finite subgroup of $\SL(n)$) for which the boundary does have codimension at least two.

The example we will study in this section is given by $\hU = \bbg_n \leq  \GL(n)$, where as in the introduction $\bbg_n$ is the group of polynomial reparametrisations of $n$-jets of holomorphic germs $(\CC,0) \to (\CC,0)$. This group plays a central role in global singularity theory \cite{arnold} and in the recent history of hyperbolic varieties \cite{demailly,dmr,kobayashi, siu}. We will see that the
compactification $\overline{\GL(n)/\GG_n}$
constructed in $\S$4 as the closure of an orbit of $\GL(n)$ with stabiliser
$\bbg_n$  in a Grassmannian $\grass_n(\symk)$ is isomorphic to the so-called curvilinear component of the punctual Hilbert scheme on $\CC^n$ \cite{b2,bertin}. 

\subsection{Singularity theory in a nutshell \cite{arnold,b2,bsz,gaffney,mather,porteous,ronga}}
Let $J_n(m,l)$ denote the space of $n$-jets of holomorphic map germs from $\CC^m$ to $\CC^l$ mapping the origin to the origin. This is a finite dimensional complex vector space, and there is a complex linear composition 
of jets
\[J_n(m,l) \otimes J_n(l,p) \to J_n(m,p).\]
Let $J_n^\reg(m,l)$ denote the open dense subset of $J_n(m,l)$ consisting of jets whose linear part is regular (that is, of maximal rank). Note that
$$\bbg_n=J_n^{\reg}(1,1)$$ 
becomes a group under composition of jets, and it acts via reparametrisation on $J_n(1,n)$.  

If $z$ denotes the standard complex coordinate on $\CC$, then elements of the vector space $J_n(1,1)$ can be identified with polynomials of the form $p(z)=\a_1z+\ldots +\a_nz^n$ with coefficients in $\CC$, so $\{z,z^2,\ldots, z^n\}$ is a natural basis for $J_n(1,1)$ over $\CC$. The composition 
of $p(z)$ with $q(z)=\b_1z+\ldots +\b_nz^n$ is 
\[(p \circ q)(z)=(\a_1\b_1)z+(\a_2\b_1+\a_1^2\b_2)z^2+\ldots \]
which corresponds (with respect to the basis $\{z,z^2,\ldots, z^n\}$) to multiplication on the right by the matrix
\begin{equation}\label{bbg}
\left(
\begin{array}{ccccc}
\alpha_1 & \alpha_2   & \alpha_3          & \ldots & \alpha_n\\
0        & \alpha_1^2 & 2\alpha_1\alpha_2 & \ldots & 2\alpha_1\alpha_{n-1}+\ldots \\
0        & 0          & \alpha_1^3        & \ldots & 3\alpha_1^2\alpha_ {n-2}+ \ldots \\
0        & 0          & 0                 & \ldots & \cdot \\
\cdot    & \cdot   & \cdot    & \ldots & \alpha_1^n
\end{array}
 \right)
\end{equation}
where the polynomial in the $(i,j)$ entry is
\[p_{i,j}({\alpha}_1, \ldots, \alpha_n)=\sum_{\ell_1+\ell_2+\ldots +\ell_i=j}\alpha_{\ell_1}\alpha_{\ell_2} \ldots \alpha_{\ell_i}.\]
Thus the subgroup $\bbg_n$ of $\GL(n)$ is an extension by $\CC^*$ of its
unipotent radical $\mathbb{U}_n$, and both $\bbg_n$ and $\mathbb{U}_n$
are generated along the first row and have the form (\ref{presentation})
with weights $1,2,\ldots,n$.
We can think of the quotient $J^n(1,n)/\bbg_n$ as the moduli space of $n$-jets of entire holomorphic curves in $\CC^n$. 

Global singularity theory studies global and local behavior of singularities of holomorphic maps between complex manifolds; \cite{arnold} is a standard reference. For a holomorphic map $f:M \to N$ with $f(p)=q \in N$ the local algebra is $A(f)=\mathfrak{m}_p /f^*\mathfrak{m}_q$; if $\mathfrak{m}_p$ is a finite $\mathfrak{m}_q$-module, then $p$ is an isolated singularity. 
For a complex nilpotent algebra $A$ with $\dim_{\CC} A=n$ we define
\[\Sigma_A(m,l)=\left\{f \in J_n(m,l):A(f)\simeq A \right\}\]
to be the subset of $J_n(m,l)$ consisting of germs with local algebra at the origin isomorphic to $A$; these are known as the $A$-singularity germs. There is a natural hierarchy of singularities where for two algebras $A$ and $A'$ of the same dimension $n$ we have 
\[A>A' \text{ if } \Sigma_{A}(m,l) \subset \overline{\Sigma_{A'}(m,l)} \text{ for } l>>m.\]
When $A_n=z\CC[z]/z^{n+1}$ is the nilpotent algebra generated by one element, the corresponding singularities are the so-called $A_n$-singularities (also known as Morin singularities or curvilinear singularities). These vanish to order $n$ in some direction, giving us the geometric description 
\[\Sigma_{A_n}(m,l)=\{\psi\in J_n(m,l): \exists \gamma \in J_n(1,m) \text{ such that } \gamma \circ \psi=0\}.\] 
If $\psi\in J_n(m,l)$ and a test curve $\gamma_0 \in J_n(1,m)$ exists with
$  \gamma_0 \circ \psi=0 $, then there is a whole family of such test curves. Indeed, for any $\beta \in J_n^\reg(1,1)$, the curve $\beta \circ \gamma_0$
is also a test curve, and in fact if $\psi \in J_n^{\reg}(m,l)$ then we get all test curves $\gamma \in J_n(1,m)$ with
$  \gamma \circ \psi=0 $ in this way. This description of the curvilinear jets using the so-called \lq test-curve model' goes back to Porteous, Ronga and Gaffney \cite{gaffney,porteous,ronga}.

This means that the regular part of $\Sigma_{A_n}(m,l)$ fibres over the quotient $J_n^\reg(1,m)/\bbg_n$, which can be thought of as representing moduli of $n$-jets of holomorphic germs in $\CC^m$. We can identify $J_n(1,m)$ with the set $M_{m \times n}(\CC)$ of $m\times n$ complex matrices by putting the $i$th derivative of $\gamma \in J_n(1,m)$ into the $i$th column of the corresponding matrix, and then $J_n^\reg(1,m)$ consists of the matrices in $M_{m\times n}(\CC)$ with nonzero first column. Therefore when $m=n$ the quotient
$J^{\rm{reg}}_n(1,n)/\bbg_n$ contains  $\GL(n)/\bbg_n$ as a dense open subset. 

In \cite{bsz} the first author and Szenes use this model of the Morin singularities and 
the machinery of equivariant localization to compute some useful invariants  of $A_n$ singularities: their Thom polynomials. These ideas were later generalised in \cite{kazarian,rf}. 

The hierarchy of singularities is only partially understood, but there are well-known singularity classes in the closure of the $A_n$-singularities (for details see \cite{arnold,rimanyi}). In particular, for $n=4$, the so called $I_{a,b}$ singularities with $a+b=4$ are defined by the algebra
\[A_{I_{a,b}}=(x,y)/(xy,x^a+y^b)\]     
and it is well known (see \cite{rimanyi,rf}) that 
\[\Sigma_{I_{2,2}}(m,l) \subset \overline{\Sigma_{A_4}(m,l)}\]
has codimension $1$ in $\overline{\Sigma_{A_4}(m,l)}$. But as we have just seen, a dense open subset of $\Sigma_{A_4}(4,l)$ fibres over $\GL(4)/\bbg_4$, and the latter is embedded via $\phi_4$ (see Corollary \ref{phi}) into $\grass_4(\symk)$ where $\bk=(1,2,3,4)$ as at \eqref{bbg}. When $l=1$, then in fact 
\[\overline{\Sigma_{A_4}(4,1)}=\overline{\phi_4(\GL(4)} \subseteq \grass_4(\symk),\]
because the fibres are trivial. So it follows that $\Sigma_{I_{2,2}}(4,1)$ lies in the boundary of $\overline{\phi_4(\GL(4)}$ 
and 
has codimension one.
In fact
\[\zdis_{2,2}=\lim_{t \to 0}\left(\begin{array}{cccc}t & t^{-2} & -t^{-5} & 0\\ 0 & 1 & -2t^{-3} & 0\\ 0 & 0 & t^{-1} & 0\\ 0 & 0 & 0 & 1  \end{array}\right)\cdot \zdis_n=e_1 \wedge e_2 \wedge (e_3+e_1^2) \wedge (e_4+e_1e_3+e_2^2+e_1^3)\]
sits in $\Sigma_{I_{2,2}}(4,1)$ and its orbit has codimension $1$ in $\overline{\phi(\GL(4)}$. Indeed it can be checked by direct computation that the stabiliser of $\zdis_{2,2}$ is 
\[ \left\{ \left(\begin{array}{cccc}t & a & b & c\\ 0 & t^{3/2} & -2t^{1/2}a & d\\ 0 & 0 & t^{2} & tb+a^2\\ 0 & 0 & 0 & t^3  \end{array}\right): t \in \CC^*, a,b,c,d \in \CC \right\} \]
which has dimension $5$, whereas the stabiliser $\GG_4$ of $\zdis_4$ in $\GL(4)$ has dimension $4$.  
 
\subsection{Invariant jet differentials and the Demailly bundle}

Jet differentials have played a central role in the study of hyperbolic varieties. Their contribution can be traced back to the work of Bloch \cite{bloch}, Cartan \cite{cartan}, Ahlfors \cite{ahlfors}, Green and Griffiths \cite{gg}, Siu \cite{siu}, whose ideas were extended in the seminal paper of Demailly \cite{demailly}, and recently used by Diverio, Merker and Rousseau \cite{dmr} and the first author in \cite{b} to prove the Green Griffiths conjecture for generic projective hypersurfaces of high order; see also the survey papers \cite{kobayashi,demailly,dr} for more details.

Let 
\[f:\CC \to X,\ \ t\to f(t)=(f_1(t),f_2(t), \ldots ,f_d(t))\]
be a curve written in local holomorphic coordinates
$(z_1,\ldots ,z_d)$ on a complex manifold $X$, where $d=\dim(X)$. Let $J_n(X)$ be the $n$-jet bundle over $X$ of holomorphic curves, whose fibre $(J_n(X))_x$
at $x \in X$ is 
the space of $n$-jets of germs at $x$ of holomorphic curves in $X$.
 This fibre can be identified with $J_n(1,d)$. The group of reparametrisations $\bbg_n=J_n^{\reg}(1,1)$
acts fibrewise on $J_n(X)$, and the action is linearised as at \eqref{bbg}. For $\lambda \in \CC^*$ we have 
\[(\lambda \cdot f)(t)=f(\lambda \cdot t),\text{ so } \lambda \cdot (f',f'',\ldots ,f^{(k)})=(\lambda f',\lambda^2 f'',\ldots ,\lambda^k f^{(k)}).\] 
Polynomial functions on $J_n(X)$ correspond to algebraic differential operators called jet differentials; these have the form 
\[Q(f',f'',\ldots ,f^{(k)})=\sum_{\alpha_i \in \mathbb{N}^n}a_{\alpha_1,\alpha_2,\ldots \alpha_k}(f(t))(f'(t)^{\alpha_1}f''(t)^{\alpha_2}\cdots f^{(n)}(t)^{\alpha_n}),\]
where $a_{\alpha_1,\alpha_2,\ldots \alpha_n}(z)$ are holomorphic coefficients on $X$ and $t \mapsto f(t)$  is the germ of a holomorphic curve in $X$.
Here 
$Q$ is homogeneous of weighted degree $m$ under the $\CC^*$ action if and only if 
\[Q(\lambda f',\lambda^2 f'',\ldots ,\lambda^k f^{(n)})=\lambda^m Q(f',f'',\ldots ,f^{(n)})\]
for every $\lambda \in \CC$.
\begin{defn}
(i) (Green-Griffiths \cite{gg}) Let $E_{n,m}^{GG}$ denote the sheaf on $X$ of jet differentials of order $n$ and weighted degree $m$.

(ii) (Demailly, \cite{demailly}) The bundle of invariant jet differentials of order $n$ and weighted
degree $m$ is the subbundle $E_{n,m}$ of $E_{n,m}^{GG}$ whose elements are
invariant under the action of the unipotent radical $\mathbb{U}_n$ of the reparametrisation group $\bbg_n$ and transform under the action of $\bbg_n$ as 
\[Q((f\circ \phi)',(f \circ \phi)'',\ldots ,(f \circ \phi)^{(n)})=\phi'(0)^mQ(f',f'',\ldots, f^{(n)})\]
for $\phi \in \bbg_n$.
\end{defn}
Thus the fibres of the Demailly bundle $\bigoplus_{m \geqslant 0} E_{n,m}$ are isomorphic to $\CC[J_n(1,d)]^{\mathbb{U}_n}$, where $\mathbb{U}_n$ is the unipotent radical of $\bbg_n$. Demailly in \cite{demailly} conjectured that this algebra of invariant jet differentials is finitely generated. 
%
%
Rousseau (\cite{rousseau}) and Merker (\cite{Merker1,Merker2}) showed that when both $n$ and $\dim X$ are small then this conjecture is true, and in \cite{Merker2} Merker provided an algorithm which produces  finite sets of generators when they exist for any $\dim X$ and $n$. 
In \cite{BK} the authors put forward a proof  that $\UU_n$ is a Grosshans subgroup of $\SL(n)$, with the Demailly conjecture as an immediate corollary, but we later discovered a gap in that proof. 
In this paper we are studying quotient constructions for linear actions such as that of $\UU_n$ on a fibre of the Demailly bundle $\bigoplus_{m \geqslant 0} E_{n,m}$ from a more geometric point of view; however
it will follow from this point of view (see Remark \ref{Demconj}) that the subalgebra of $\CC[J_n(1,d)]^{\mathbb{U}_n}$ spanned by the jet differentials which are weight vectors with non-positive weight for the action of $\CC^* \leq \tilde{\GG}_n$ twisted by a well adapted rational character is finitely generated (cf. \cite{Merker1,Merker2}).

\subsection{Curvilinear Hilbert schemes}\label{subsec:hilb}

In \cite{b3}  the closure $\overline{J_n(1,d)/\bbg_n}$ 
of ${J_n(1,d)/\bbg_n}$ embedded in $\grass_n(\oplus_{i=1}^n \Sym^i\CC^d)$  is identified with the curvilinear component of the $n+1$-point punctual Hilbert scheme on $\CC^d$; this geometric component of the punctual Hilbert scheme on $\CC^d$ is thus the compactification of a non-reductive quotient.

 Hilbert schemes of points on surfaces form a central object of geometry and representation theory and have a rich literature (see for example \cite{nakajima,bertin}). Recently many interesting connections between Hilbert schemes of points on  planar curve singularities and the topology of their links have been discovered \cite{shende,oblomkovshende,ors,maulik}.  However, much less is known about Hilbert schemes or punctual Hilbert schemes on higher dimensional manifolds. 

As above let  $\bbg_n=\jetreg 11$ denote the group of $n$-jets of reparametrisation germs of $\CC$, which acts on the space $\jetreg 1d$ of $n$-jets of germs of curves $f:(\CC,0) \to (\CC^d,0)$ with nonzero linear part. 
As in $\S$4 we have a map 
\[\phi: \jetreg 1d \to \grass_n(\oplus_{i=1}^n \Sym^i\CC^d)\]
\[(v_1,\ldots, v_n)  \mapsto [v_1\wedge (v_2 + v_1^2)\wedge \ldots \wedge (\sum_{a_1+a_2+\ldots +a_i=n}v_{a_1}v_{a_2} \ldots v_{a_i})]\]
where $v_i \in \CC^d$ is the degree $i$ part of the germ in $\jetreg 1d$, so that $v_1\neq 0$.
This map is invariant under the action of  $\bbg_n = \jetreg 11$ on the left, and gives us an embedding 
\[\jetreg 1d/\bbg_n \hookrightarrow \grass_n(\oplus_{i=1}^n \Sym^i\CC^d)).\]
Let $X_{n,d} = \overline{\jetreg 1d/\bbg_n}$ denote the closure of the image of this embedding.

In \cite{b3} it is proved that $X_{n,d}$ is the curvilinear component of the punctual Hilbert scheme of $n+1$ points on $\CC^d$. This component is defined as follows. Let $(\CC^d)^{[n]}$ denote the Hilbert scheme of $n$ points on $\CC^d$; that is, the set of zero-dimensional subschemes of $\CC^d$ of length $n$. The punctual Hilbert scheme $(\CC^d)^{[n]}_0$ consists of those subschemes which are supported at the origin in $\CC^d$. The components of the punctual Hilbert scheme are not known for $d\ge 3$ but there is a distinguished component containing all curvilinear subschemes.      

\begin{defn} A subscheme $\xi \in (\CC^d)^{[n]}_0$ is called curvilinear if $\xi$ is contained in some smooth curve $C\subset \CC^d$. Equivalently, one might say that $\calo_\xi$ is isomorphic  to the $\CC$-algebra $\CC[z]/z^{n}$.
The punctual curvilinear locus is the set of curvilinear subschemes supported at the origin in $\CC^d$ and its closure $\curv^{[n]}_d$ is the (punctual) curvilinear component of $(\CC^d)^{[n]}_0$. 
\end{defn}

Let $\mathfrak{m}=(x_1,\ldots, x_d)\subset \calo_{\CC^d,0}$ denote the maximal ideal of the local ring at the origin. Then 
\[\curv^{[n]}_d=\overline{\{I \subset \mathfrak{m}:\mathfrak{m}/I \simeq t\CC[t]/t^{n}\}}.\]
Note that $\Sym^{\le n} \CC^d=\mathfrak{m}/\mathfrak{m}^{n+1}=\oplus_{i=1}^n
\Sym^i\CC^d$ consists of function-germs of degree $\le n$, and the punctual Hilbert scheme sits naturally in its Grassmannian
\[\rho:(\CC^d)^{[n+1]}_0  \hookrightarrow \grass(n,\Sym^{\le n} \CC^d)\]
\[I \mapsto \mathfrak{m}/I. \]
The idea of \cite{b3} to describe the curvilinear component is the observation that curvilinear subschemes have test curves; that is, map germs $\gamma \in J_n(1,d)$ on which they vanish up to order $n$, so that $\gamma(\CC) \subseteq \mathrm{Spec}(\mathfrak{m}/I)$. Such a test curve is unique up to polynomial reparametrisation of $(\CC,0)$.  Therefore the image of $\phi$ is the same as the image of $\rho$ and their closures coincide.

\begin{prop} For $d,n \in \mathbb{Z}^{>0}$ we have $\curv^{[n+1]}_d=X_{n,d}$.
\end{prop}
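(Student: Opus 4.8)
The plan is to upgrade the sketch preceding the statement into a precise \emph{set-theoretic} identification of the images of the two maps into the common Grassmannian $\grass_n(\Sym^{\le n}\CC^d)=\grass_n(\oplus_{i=1}^n\Sym^i\CC^d)$, after which equality of the two closures is automatic. Write $V=\CC^d$, let $\mathfrak m\subset\calo_{\CC^d,0}$ be the maximal ideal, and first record that for a length $n+1$ subscheme $\xi$ supported at the origin the local algebra $\calo_{\CC^d,0}/I$ satisfies $\mathfrak m^{n+1}\subseteq I$ (the maximal ideal of an Artinian local $\CC$-algebra of length $n+1$ is nilpotent of index at most $n+1$). Hence $\rho(\xi)=\mathfrak m/I$ is a genuine $n$-dimensional quotient of $\mathfrak m/\mathfrak m^{n+1}=\Sym^{\le n}V^\ast$, and the embedding $\rho$ of the statement is well defined; for curvilinear $\xi$ the chain $\mathfrak m/I\supset\mathfrak m^2/I\supset\cdots$ drops by exactly one at each step, so $\mathfrak m/I$ has dimension $n$.

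Next I would make the test-curve correspondence into an honest bijection. Given $\gamma=(v_1,\ldots,v_n)\in\jetreg{1}{d}$ with $v_1\neq 0$, set $\gamma^\ast\colon\calo_{\CC^d,0}\to\CC[t]/(t^{n+1})$, $f\mapsto f\circ\gamma\bmod t^{n+1}$, and $I_\gamma=\ker\gamma^\ast$. Since $v_1\neq 0$ some linear form $\ell$ has $\ell(v_1)\neq 0$, so the powers of $\ell\circ\gamma$ show that $\gamma^\ast$ carries $\mathfrak m$ onto $t\CC[t]/(t^{n+1})$; therefore $\calo_{\CC^d,0}/I_\gamma\cong\CC[t]/(t^{n+1})$ and $I_\gamma$ is a curvilinear ideal of colength $n+1$ supported at the origin. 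Conversely a curvilinear $I$ carries an abstract isomorphism $\calo_{\CC^d,0}/I\cong\CC[z]/(z^{n+1})$; choosing one is the same as choosing a parametrisation $\gamma$, and two choices differ exactly by an element of $\mathrm{Aut}_{\CC\text{-alg}}(\CC[z]/(z^{n+1}))$, which is precisely the reparametrisation group $\bbg_n=\jetreg{1}{1}$ acting by $\gamma\mapsto\gamma\circ\beta$. Thus $\gamma\mapsto I_\gamma$ descends to a bijection between $\jetreg{1}{d}/\bbg_n$ and the punctual curvilinear locus.

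The heart of the argument is to check that this bijection intertwines $\phi$ and $\rho$, that is, $\rho(I_\gamma)=\phi(\gamma)$. Write $w_j=\sum_{k\ge 1}\sum_{a_1+\cdots+a_k=j}v_{a_1}\cdots v_{a_k}\in\Sym^{\le n}V$ for the degree-$j$ velocity symbol, so that $\phi(\gamma)=\langle w_1,\ldots,w_n\rangle$. Expanding $f\circ\gamma$ for a monomial $f$ shows that the coefficient of $t^j$ in $\gamma^\ast(f)$ is exactly the value of the natural pairing $\langle f,w_j\rangle$ between the function space $\Sym^{\le n}V^\ast$ and the symbol space $\Sym^{\le n}V$; hence (modulo $\mathfrak m^{n+1}$) $f\in I_\gamma$ if and only if $f$ annihilates $w_1,\ldots,w_n$. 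Therefore $I_\gamma/\mathfrak m^{n+1}$ is the annihilator of $\langle w_1,\ldots,w_n\rangle=\phi(\gamma)$, so under the standard duality between an $n$-dimensional quotient of $\mathfrak m/\mathfrak m^{n+1}$ and an $n$-dimensional subspace of its dual $\Sym^{\le n}V$ the point $\rho(I_\gamma)=\mathfrak m/I_\gamma$ is carried exactly to $\phi(\gamma)$. In passing this re-proves that $\phi$ separates $\bbg_n$-orbits, since $\rho$ is an embedding.

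Combining the last two steps, the image under $\rho$ of the punctual curvilinear locus coincides, as a subset of the Grassmannian, with $\phi(\jetreg{1}{d}/\bbg_n)$, and taking Zariski closures gives
\[
\curv^{[n+1]}_d=\overline{\rho(\text{punctual curvilinear locus})}=\overline{\phi(\jetreg{1}{d}/\bbg_n)}=X_{n,d},
\]
which is the assertion. I expect the main obstacle to be the bookkeeping in the intertwining step: matching the quotient description $\mathfrak m/I$ used by $\rho$ with the span-of-symbols description used by $\phi$ through the function–symbol duality, and confirming on the nose that $\mathrm{Aut}_{\CC\text{-alg}}(\CC[z]/(z^{n+1}))=\bbg_n$ so that the fibres of $\gamma\mapsto I_\gamma$ are exactly the $\bbg_n$-orbits.
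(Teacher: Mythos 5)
Your argument is correct and follows exactly the route the paper itself takes (which merely sketches the test-curve correspondence and defers the details to \cite{b3}): curvilinear ideals correspond bijectively to regular test curves modulo $\mathrm{Aut}(\CC[z]/(z^{n+1}))=\bbg_n$, the function--symbol duality identifies $\rho(I_\gamma)$ with $\phi(\gamma)$, and the closures then agree. Your write-up supplies precisely the bookkeeping the paper omits, so there is nothing to add.
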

When $d=2$ the curvilinear component $\curv^{[n+1]}_2$ is dense in $(\CC^2)^{[n+1]}_0$, and therefore the full punctual Hilbert scheme is equal to the closure of the image of $\phi$.
\begin{corollary} $(\CC^2)^{[n+1]}_0=X_{n,2}$ for any positive integer $n$.
\end{corollary}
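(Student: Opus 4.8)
The plan is to read off the corollary from the preceding Proposition together with two classical facts special to Hilbert schemes of points on surfaces. By the Proposition we already have $X_{n,2}=\curv^{[n+1]}_2$, so it suffices to prove that for $d=2$ the curvilinear component is all of the punctual Hilbert scheme, that is $\curv^{[n+1]}_2=(\CC^2)^{[n+1]}_0$.

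First I would invoke the results of Fogarty and Brian\c{c}on (see \cite{nakajima,bertin}): since $\CC^2$ is a smooth surface, the Hilbert scheme $(\CC^2)^{[n+1]}$ is smooth and irreducible, and the punctual Hilbert scheme $(\CC^2)^{[n+1]}_0$, the fibre of the Hilbert--Chow morphism over the origin, is irreducible of dimension $n$. This is the one ingredient genuinely special to the surface case and is the crux of the argument: for $d\ge 3$ the punctual Hilbert scheme is reducible, which is exactly why the statement is restricted to $d=2$.

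Next I would show that the punctual curvilinear locus $\{I\subset\mathfrak{m}:\mathfrak{m}/I\simeq t\CC[t]/t^{n+1}\}$ is a nonempty Zariski-open subset of $(\CC^2)^{[n+1]}_0$. A length-$(n+1)$ subscheme $\xi$ supported at the origin is curvilinear precisely when $\calo_\xi=\calo_{\CC^2,0}/I$ is generated by a single element, equivalently when its embedding dimension $\dim_\CC \mathfrak{m}/(\mathfrak{m}^2+I)$ is at most one. Embedding dimension is upper semicontinuous on the punctual Hilbert scheme, so this is an open condition; and it is satisfied by the truncation of any smooth curve germ through the origin, so the locus is nonempty.

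Finally, since $(\CC^2)^{[n+1]}_0$ is irreducible and the curvilinear locus is a nonempty open subset, its closure is the whole punctual Hilbert scheme, giving $\curv^{[n+1]}_2=(\CC^2)^{[n+1]}_0$. Combining with the Proposition yields $(\CC^2)^{[n+1]}_0=\curv^{[n+1]}_2=X_{n,2}$, as required. The only real obstacle is the irreducibility statement of Brian\c{c}on; the openness and nonemptiness of the curvilinear locus are formal, and the rest is bookkeeping.
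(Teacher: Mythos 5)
Your proposal is correct and follows essentially the same route as the paper, which simply asserts that for $d=2$ the curvilinear component is dense in $(\CC^2)^{[n+1]}_0$ and concludes from the preceding Proposition. You have merely made explicit the standard justification for that density claim (Brian\c{c}on's irreducibility of the punctual Hilbert scheme of a smooth surface, plus openness and nonemptiness of the curvilinear locus), which is the argument the paper implicitly relies on via its references.
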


This description of the curvilinear component becomes particularly useful when
$n \le d$ so that the number of points is not more than the dimension $d$ plus $1$. In this case, the curvilinear component $\curv^{[n+1]}_d$ is the closure of a $\GL(n)$-orbit in the Grassmannian $\grass_n(\Sym^{\le n}\CC^n)$. In fact, for any fixed basis  $\{e_1,\ldots ,e_d\}$ of $\CC^d$, we have $X_{n,d}=\overline{\mathrm{GL}(n) \cdot \mathbf{e}_{n,d}}$
where 
\[\mathbf{e}_{n,d}=e_1 \wedge (e_2\oplus e_1^2) \wedge \ldots \wedge (\sum_{\substack{a_1+\ldots+a_l=n\\ l\le d}} e_{a_1}\ldots e_{a_l}).\] 
This follows when $n \le d$ from the fact that $\phi$ is $\GL(n)$-equivariant, but for $n>d$ it cannot be true as the dimension of the quotient is larger than the dimension of $\GL(n)$. In particular, when $d=n$ we have $\mathrm{GL}(n) \subset J_n^\reg(1,n)$, and an embedding
\[\mathrm{GL}(n)/\bbg_n \subseteq \grass_n(\Sym^{\le n}\CC^n)\]
and the closure of the image $X_{n,n}=\curv^{[n+1]}_n$ is the curvilinear component of the punctual Hilbert scheme of $n+1$ points on $\CC^n$. In \cite{b3} this parametrisation of the curvilinear Hilbert scheme is used to develop an iterated residue formula for cohomological intersection numbers of tautological bundles over the curvilinear component. 

\section{Proof of the 
 theorems}

\subsection{Boundary components of $\GL(n)/\hU$ in $\PP(\wsymk n)$}\label{affineboundary}


Let us now return to the situation in $\S$4 where $\hat{U}$ and $U$ are subgroups of $\GL(n)$ of the form described at (\ref{presentation}). 
In \S \ref{sec:construction} we embedded $\GL(n)/\hU$ in the Grassmannian $$\grass_n(\symk) \subseteq \PP(\wsymk n)$$ as the $\GL(n)$ orbit of 
\[\zdis_n=\phi_n(e_1,\ldots, e_n)=[e_1\wedge (e_2 + e_1^{\weight_2})\wedge \ldots \wedge (\sum_{i=1}^n p_{in}(e_1,\ldots, e_n))]\in \PP[\wsymk n],\]
and observed at Remark \ref{afemb} that the image of this embedding lies in the open affine subset defined by the
non-vanishing of the
coordinate in $ \PP(\wsymk n)$ corresponding to the one-dimensional summand
$\wedge^n \CC^n$ of $\wsymk n$ spanned by $e_1 \wedge \cdots \wedge e_n$.  
In $\S$5 we saw that there exist examples where the image has codimension-one boundary components which meet this affine open subset, and therefore the Grosshans principle 
 is not applicable in this situation.

In this section we study first the boundary of  the orbit $
\GL(n)\zdis_n $ in the
affine space  $\mathcal{W}=\wsymk n$. The stabiliser of $\zdis_n$ in $\GL(n)$ is $U$. Let $\mathcal{W}_{v_1} $ be the linear subspace 
$$\mathcal{W}_{v_1} = \bigoplus_{(k_1,k_2,\ldots,k_n) \neq (1,1,\ldots,1)}
\wedge^{k_1} (\CC^n) \otimes \wedge^{k_2} (\sym^{\weight_2} \CC^n) \otimes \cdots \otimes  \wedge^{k_n} (\sym^{\weight_n} \CC^n)   $$
of $ \mathcal{W}$
where the coefficients corresponding to $v_1 \wedge v_1^{\weight_2} \wedge \ldots \wedge v_1^{\weight_n}$ are zero; that is, if $\pi^{\wedge}:\mathcal{W} \to \CC^n \wedge \Sym^{\weight_2} \CC^n \wedge \ldots \wedge \Sym^{\weight_n} \CC^n$ denotes the projection onto the corresponding summand of $ \mathcal{W}$ then 
\[\mathcal{W}_{v_1}=\{w\in \mathcal{W}: \pi^{\wedge}(w)=0\} \subset \mathcal{W}.\] 
Similarly, let 
\[\mathcal{W}_{\det}=\{w\in \mathcal{W}: \pi^{\det}(w)=0\} \subset \mathcal{W}\]
denote the kernel of the coordinate corresponding to $v_1\wedge \ldots \wedge v_n$, or equivalently the projection $\pi^{\det}:\mathcal{W} \to \wedge^n \CC^n$. 
\begin{prop}\label{boundary}
The boundary of the orbit $\GL(n)(\zdis_n)$ in $\mathcal{W}$ is contained in the union of the subspaces $\mathcal{W}_{v_1}$ and $\mathcal{W}_{\det}$:
\[\overline{\GL(n)(\zdis_n)} \setminus \GL(n)(\zdis_n )\subset \mathcal{W}_{v_1} \cup \mathcal{W}_{\det}\]
\end{prop}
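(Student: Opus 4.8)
The plan is to prove the contrapositive: every point $w\in\overline{\GL(n)(\zdis_n)}$ with $\pi^{\wedge}(w)\neq 0$ and $\pi^{\det}(w)\neq 0$ already lies in the orbit $\GL(n)(\zdis_n)$. The starting point is that in the decomposition
\[
\wsymk n=\bigoplus_{k_1+\cdots+k_n=n}\wedge^{k_1}\CC^n\otimes\wedge^{k_2}(\sym^{\weight_2}\CC^n)\otimes\cdots\otimes\wedge^{k_n}(\sym^{\weight_n}\CC^n)
\]
each multi-index summand is a $\GL(n)$-subrepresentation, since $\GL(n)$ preserves every $\sym^{\weight_i}\CC^n$. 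Hence $\mathcal{W}_{v_1}$ (the kernel of the projection $\pi^{\wedge}$ onto the summand with $(k_1,\ldots,k_n)=(1,1,\ldots,1)$) and $\mathcal{W}_{\det}$ (the kernel of $\pi^{\det}$ onto $\wedge^n\CC^n$) are $\GL(n)$-invariant linear subspaces, so $\Omega=\mathcal{W}\setminus(\mathcal{W}_{v_1}\cup\mathcal{W}_{\det})$ is a $\GL(n)$-invariant open set. As $\pi^{\det}(\zdis_n)=e_1\wedge\cdots\wedge e_n\neq0$ and the first wedge factor of $\pi^{\wedge}(\zdis_n)=e_1\wedge e_1^{\weight_2}\wedge\cdots\wedge e_1^{\weight_n}$ is $e_1\neq0$, the orbit is contained in $\Omega$, and the proposition is equivalent to the assertion that $\GL(n)(\zdis_n)$ is closed in $\Omega$.

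To establish this I would reconstruct, for $w\in\Omega$, the columns $v_1,\ldots,v_n$ of a matrix $g$ with $g\zdis_n=w$, following the recursive argument in the proof of Theorem \ref{embed}. For $g\in\GL(n)$ with columns $v_j=ge_j$ one computes directly that $\pi^{\det}(g\zdis_n)=\det(g)\,e_1\wedge\cdots\wedge e_n$, while the $(1,\ldots,1)$-component is
\[
\pi^{\wedge}(g\zdis_n)=v_1\wedge\Big(\textstyle\sum_{\weight_i=\weight_2}p_{i,2}(\bv)\Big)\wedge\cdots\wedge\Big(\textstyle\sum_{\weight_i=\weight_n}p_{i,n}(\bv)\Big),
\]
whose first wedge factor is exactly $v_1$. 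Thus on $\Omega$ the hypothesis $\pi^{\wedge}(w)\neq0$ reads off (and forces the nonvanishing of) the first column $v_1$, which initiates the recursion, while $\pi^{\det}(w)\neq0$ records the nonvanishing of $v_1\wedge\cdots\wedge v_n$. Feeding the Pl\"ucker coordinates of $w$ into the nested $\GL(n)$-equivariant projections used in the proof of Theorem \ref{embed} (the maps there called $\pi,\rho,\xi$), one solves recursively for $v_2,\ldots,v_n$; at each stage the leading coordinate that one divides by is, modulo the relations cutting out $\overline{\GL(n)(\zdis_n)}$, a power of the $\pi^{\wedge}$- or the $\pi^{\det}$-coordinate, so the reconstruction is regular precisely on $\Omega$ and yields a matrix $g=[v_1\,|\,\cdots\,|\,v_n]$ that is invertible exactly because $\pi^{\det}(w)\neq0$.

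I would then finish by a density (or valuative) argument. Since $\overline{\GL(n)(\zdis_n)}$ is irreducible, the orbit is dense in it and hence dense in the open subset $\overline{\GL(n)(\zdis_n)}\cap\Omega$. The reconstruction gives a morphism $w\mapsto g\cdot\zdis_n$ on $\Omega\cap\overline{\GL(n)(\zdis_n)}$ which is the identity on the orbit, and two morphisms into a variety that agree on a dense subset agree everywhere; therefore every $w\in\overline{\GL(n)(\zdis_n)}\cap\Omega$ satisfies $w=g\zdis_n\in\GL(n)(\zdis_n)$, so the boundary meets $\Omega$ in the empty set, which is the claim.

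I expect the main obstacle to be the bookkeeping in the reconstruction when the weights $\weight_i$ are repeated. In that case the $\sym^{\weight_j}\CC^n$-component of the $j$th factor of $\pi^{\wedge}(g\zdis_n)$ is the sum $\sum_{\weight_i=\weight_j}p_{i,j}(\bv)$ rather than the single diagonal term $v_1^{\weight_j}$, so the wedge above is no longer visibly decomposable. The delicate part is to verify that this degeneracy introduces no new denominators beyond the two coordinates cut out by $\mathcal{W}_{v_1}$ and $\mathcal{W}_{\det}$, so that the boundary is captured by exactly these two subspaces and nothing larger; the example of $\zdis_{2,2}\in\mathcal{W}_{v_1}$ from \S\ref{sec:jetdifferentials} shows that the subspace $\mathcal{W}_{v_1}$ genuinely occurs and cannot be dropped.
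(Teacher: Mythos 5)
Your overall architecture is reasonable and the first reductions are correct: $\mathcal{W}_{v_1}$ and $\mathcal{W}_{\det}$ are indeed $\GL(n)$-submodules of $\mathcal{W}$, so the proposition is equivalent to the orbit being closed in the invariant open set $\Omega=\mathcal{W}\setminus(\mathcal{W}_{v_1}\cup\mathcal{W}_{\det})$; moreover $\pi^{\det}(g\zdis_n)=\det(g)\,e_1\wedge\cdots\wedge e_n$ and $\pi^{\wedge}(g\zdis_n)=v_1\otimes v_1^{\weight_2}\otimes\cdots\otimes v_1^{\weight_n}$ (only the identity assignment of wedge factors to summands of $\symk$ contributes, since the $j$th factor of $g\zdis_n$ has no component in the $i$th summand for $i>j$, so the repeated-weight worry in your last paragraph is not actually where the difficulty lies), and the closing density argument is valid \emph{provided} the reconstruction is a morphism on all of $\overline{\GL(n)(\zdis_n)}\cap\Omega$.

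The gap is precisely there, and it is not a detail: the claim that one can solve recursively for $v_2,\ldots,v_n$ while dividing only by powers of the $\pi^{\wedge}$- and $\pi^{\det}$-coordinates is asserted, flagged by you as ``the delicate part'', and never established, yet it is essentially a restatement of the proposition itself. Two concrete reasons it does not follow from the proof of Theorem~\ref{embed}: (i) the stabiliser of $\zdis_n$ in $\GL(n)$ is $U$, so $v_2,\ldots,v_n$ are not functions of $w$ at all; you would need to construct a section of a $U$-torsor over $\overline{\GL(n)(\zdis_n)}\cap\Omega$, not merely invert the orbit map; (ii) the projections $\pi,\rho,\xi$ in that proof extract usable coefficients only under the inductive hypothesis that $g^{\le m}$ already lies in $\hU$, whereas for a general $g$ the coefficients of $g\zdis_n$ on the relevant basis vectors are polynomials in all the entries of $g$, and, for instance, the component recording $v_1\wedge\cdots\wedge v_m$ can tend to zero along a sequence even though both distinguished coordinates of $w$ stay bounded away from zero. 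The paper removes both difficulties at a stroke by first using properness of $\GL(n)/B_n$ to write $\overline{\GL(n)(\zdis_n)}=\GL(n)\cdot\overline{B_n(\zdis_n)}$ and then, by the $\GL(n)$-invariance of $\mathcal{W}_{v_1}\cup\mathcal{W}_{\det}$, analysing only limits $w=\lim_{m}b^{(m)}\zdis_n$ with $b^{(m)}$ upper triangular; for such $b^{(m)}$ the relevant coefficients really are the monomials $\det(b^{(m)})$, $(b^{(m)}_{11})^{1+\weight_2+\cdots+\weight_n}$ and $(b^{(m)}_{11})^{N}b^{(m)}_{ji}$, so nonvanishing of the two distinguished coordinates of $w$ forces every entry of $b^{(m)}$ to converge and $w=b^{(\infty)}\zdis_n$ lies in the orbit. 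You should insert that Borel reduction; with it your reconstruction collapses to the paper's limit analysis, and without it the key divisibility claim remains unsupported.
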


\begin{proof}
Let $B_n \subset \GL(n)$ denote the standard upper triangular Borel subgroup of
$\GL(n)$ which stabilises the  
+filtration $\CC e_1 \subset \CC e_1 \oplus \CC e_2 \subset \cdots \subset \CC^n$.
Since $\GL(n)/B_n$ is projective we have
\[\overline{\GL(n)\cdot (\zdis_n \oplus e_1^r)}= \GL(n) \overline{B_n\cdot (\zdis_n \oplus e_1^r)} 
.\]
Let 
$$w=\lim_{m \to \infty} b^{(m)}(\zdis_n \oplus e_1^r) \in \overline{B_n(\zdis_n \oplus e_1^r)} 
\subseteq \mathcal{W}$$
be a limit point where  
\begin{equation}\label{bmform}
b^{(m)}=\left(\begin{array}{cccc}b^{(m)}_{11} & b_{12}^{(m)} & \ldots & b_{1n}^{(m)}  \\ 0 & b^{(m)}_{22} & \ldots & b^{(m)}_{2n} \\  & & \ddots 
&  \\ 0 & 0 & \ldots & b^{(m)}_{nn}\end{array}\right)\in B_{n} \subset \GL(n)
\end{equation}
 
Now expanding the wedge product in the definition of 
$\zdis_n$ we get
\[b^{(m)}(\zdis_n)=(\det(b^{(m)}) e_1 \wedge \ldots \wedge e_n+\ldots +(b^{(m)}_{11})^{1+\weight_2+\ldots +\weight_n}e_1 \wedge e_1^{\weight_2} \wedge \ldots \wedge e_1^{\weight_n})\]
so by considering the coefficient of $e_1 \wedge \ldots \wedge e_n$ we see that the determinant $\det(b^{(m)})$ tends to a limit in $\CC$
as $m \to \infty$. If this limit is zero then the limit point $w$ sits in $\mathcal{W}_{\det}$, so we will focus on the other case when 
$\lim_{m\to \infty}\det(b^{(m)}) \in \CC \setminus \{0\}$. Then we have to show that if $w$ is a boundary point then $w\in \mathcal{W}_{v_1}$, that is, $\lim_{m\to \infty} b_{11}^{(m)}=0$. 


We show indirectly that  $b^{(\infty)}_{11}=\lim_{m\to \infty} b^{(m)}_{11}\in \CC \setminus \{0\}$ implies that $w \in B_n (\zdis_n \oplus e_1)$ sits in the orbit. 
Here
\begin{multline}
b^{(m)}\zdis_n = b^{(m)}_{11}e_1 \wedge (b^{(m)}_{22}e_2+(b^{(m)}_{11})^{\weight_2}e_1^{\weight_2}) \wedge \ldots 
\wedge 
(b^{(m)}_{nn}e_n+b^{(m)}_{n-1n}e_{n-1}+\ldots +b^{(m)}_{1n}e_1+ \nonumber \\ 
 +\sum_{s=2}^{n-1} p_{sn}( 
b^{(m)}_{11}e_{1}, b^{(m)}_{22}e_{2}+b^{(m)}_{12}e_{1}, \ldots ,b^{(m)}_{nn} e_n+
\ldots +b^{(m)}_{1n}e_{1})+(b^{(m)}_{11})^{\weight_i}e_1^{\weight_i} ). 
\end{multline}
Now look at the coefficient of 
\[e_1 \wedge e_1^{\weight_2} \wedge \ldots \wedge e_1^{\weight_{i-1}} \wedge e_j \wedge e_1^{\weight_{i+1}} \wedge \ldots \wedge e_1^{\weight_n}\]
in $b^{(m)}(\zdis_n)$ when $1 \leq j \leq i \leq n$; we see that 
\[(b^{(m)}_{11})^{1+\weight_2+\ldots +\weight_{i-1} + \weight_{i+1} +\ldots +\weight_n}b^{(m)}_{ji} \]
tends to a limit in $\CC$ as $m \to \infty$, and 
so since $b^{(\infty)}_{11} \neq 0$
\[b^{(m)}_{ji} \to b^{(\infty)}_{ji} \in \CC.\]
Also
\[\lim_{m\to \infty} \det (b^{(m)})=b^{(\infty)}_{11} b^{(\infty)}_{22} \cdots b^{(\infty)}_{nn} \in \CC \setminus \{0\},\]
so $b^{(m)} \to b^{(\infty)} \in \GL(n)$. Therefore $w = b^{(\infty)}(\zdis_n \oplus e_1)$
lies in the orbit $\GL(n)(\zdis_{n} \oplus e_1)$ as required.

\end{proof}

\begin{corollary} \label{corbound}
The boundary of the orbit $\GL(n)[\zdis_n]$ in $\PP(\mathcal{W})$ is contained in the union of the subspaces $\PP(\mathcal{W}_{v_1})$ and $\PP(\mathcal{W}_{\det})$.
\end{corollary}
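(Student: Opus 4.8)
The plan is to deduce the projective statement from the affine Proposition \ref{boundary} by working in the distinguished affine chart of $\PP(\mathcal{W})$. First I would record that $\pi^{\wedge}$ and $\pi^{\det}$ are the $\GL(n)$-equivariant projections of $\mathcal{W}=\wsymk n$ onto the multidegree summands indexed by $(1,\ldots,1)$ and by $(n,0,\ldots,0)$ respectively; hence $\mathcal{W}_{v_1}=\Ker\pi^{\wedge}$ and $\mathcal{W}_{\det}=\Ker\pi^{\det}$ are $\GL(n)$-invariant linear subspaces, so $\PP(\mathcal{W}_{v_1})$ and $\PP(\mathcal{W}_{\det})$ are $\GL(n)$-invariant closed subvarieties of $\PP(\mathcal{W})$. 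Write $U_{\det}=\PP(\mathcal{W})\setminus\PP(\mathcal{W}_{\det})$ for the affine chart where the $e_1\wedge\cdots\wedge e_n$-coordinate is nonzero. By Remark \ref{afemb} the whole projective orbit $\GL(n)[\zdis_n]=\phi_n(\GL(n))$ lies inside $U_{\det}$, and since $\pi^{\det}(\zdis_n)=e_1\wedge\cdots\wedge e_n$ equivariance gives $\pi^{\det}(g\zdis_n)=\det(g)\,e_1\wedge\cdots\wedge e_n$ for all $g\in\GL(n)$.

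Any boundary point of $\GL(n)[\zdis_n]$ already lying in $\PP(\mathcal{W}_{\det})$ is of the required form, so I would concentrate on boundary points lying in the chart $U_{\det}$. On $U_{\det}$ there is the canonical isomorphism $s$ onto the affine hyperplane $A_{\det}=\{w:\pi^{\det}(w)=e_1\wedge\cdots\wedge e_n\}\subset\mathcal{W}$ normalising the determinant coordinate to $1$. The key point is that $\det\colon\hU\to\CC^*$ is surjective (the diagonal torus of $\hU$ has determinant $\alpha_1^{\weight_1+\cdots+\weight_n}$), so $\GL(n)=\SL(n)\,\hU$ and therefore $\GL(n)[\zdis_n]=\SL(n)[\zdis_n]$; under $s$ this orbit is carried exactly onto the affine orbit $\SL(n)\zdis_n\subseteq A_{\det}$, because every $g\zdis_n$ is projectively equal to some $h\zdis_n$ with $h\in\SL(n)$ and then $s([h\zdis_n])=h\zdis_n$. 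As $A_{\det}$ is closed in $\mathcal{W}$ and $s$ is an isomorphism of varieties preserving closures, $\overline{\GL(n)[\zdis_n]}\cap U_{\det}$ corresponds under $s$ to the affine orbit closure $\overline{\SL(n)\zdis_n}$.

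It then remains to analyse a limit point $w\in\overline{\SL(n)\zdis_n}\setminus\SL(n)\zdis_n$. Such $w$ satisfies $\pi^{\det}(w)=e_1\wedge\cdots\wedge e_n\neq0$ by continuity, so $w\notin\mathcal{W}_{\det}$; moreover $w\notin\GL(n)\zdis_n$, since $\pi^{\det}(w)=\det(g)\,e_1\wedge\cdots\wedge e_n$ would force $\det g=1$ and hence $w\in\SL(n)\zdis_n$. Thus $w$ is a genuine boundary point of the affine orbit $\GL(n)\zdis_n$, and Proposition \ref{boundary} places it in $\mathcal{W}_{v_1}\cup\mathcal{W}_{\det}$; combined with $w\notin\mathcal{W}_{\det}$ this gives $w\in\mathcal{W}_{v_1}$, i.e. the corresponding boundary point of the projective orbit lies in $\PP(\mathcal{W}_{v_1})$. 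Together with the trivial case this proves $\overline{\GL(n)[\zdis_n]}\setminus\GL(n)[\zdis_n]\subseteq\PP(\mathcal{W}_{v_1})\cup\PP(\mathcal{W}_{\det})$. The one genuine subtlety, and the step I expect to be the main obstacle, is exactly the passage from affine to projective: the centre of $\GL(n)$ does \emph{not} act on $\mathcal{W}$ by scalars (it acts with weight $\sum_i k_i\weight_i$ on the summand indexed by $(k_1,\ldots,k_n)$), so one cannot simply projectivise Proposition \ref{boundary}; the identity $\GL(n)=\SL(n)\,\hU$ together with confinement of the orbit to the determinant chart is what makes the normalisation legitimate.
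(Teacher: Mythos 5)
Your proof is correct and follows essentially the same route as the paper: the paper deduces the corollary from Proposition \ref{boundary} by ``rescaling using elements of $\CC^*=\hU/U$'' so that affine representatives of the $b^{(m)}[\zdis_n]$ converge, and your normalisation to the hyperplane $\pi^{\det}(w)=e_1\wedge\cdots\wedge e_n$ via $\GL(n)=\SL(n)\,\hU$ is exactly a concrete (and somewhat more carefully justified) implementation of that rescaling. You have also correctly identified the one genuine subtlety --- that $Z\GL(n)$ does not act on $\mathcal{W}$ by scalars, so the projective statement is not an automatic consequence of the affine one --- which is precisely what the paper's rescaling step is there to address.
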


\begin{proof}
By rescaling using elements of $\CC^*=\hU/U$ we can assume that 
\[\lim_{m\to \infty} b^{(m)}[\zdis_n]=[\lim_{m\to \infty} b^{(m)}\zdis_n].\]
Proposition \ref{boundary} then gives us the statement. 
\end{proof}



\subsection{Well-adapted characters}

Let $X$ be a nonsingular complex projective variety on which $\hat{U}$ acts linearly with respect to a very ample line bundle $L$ inducing a $\hU$-equivariant embedding of $X$ in $\PP^N$. 
Let $\GL(n) \times_{\hU} X$ denote the quotient of $\GL(n) \times X$
by the free action of $\hU$ defined by $\hat{u}(g,x)=(g \hat{u}^{-1}, \hat{u}x)$ for $\hat{u} \in \hU$,
which is a quasi-projective variety by \cite{PopVin} Theorem 4.19. Then there
is an induced $\GL(n)$-action on $\GL(n) \times_{\hU} X$ given by left
multiplication of $\GL(n)$ on itself.
In cases where the action of $\hU$ on $X$ extends to an action of $\GL(n)$ there is an isomorphism of
$\GL(n)$-varieties 
\begin{equation} \label{basic} \GL(n) \times_{\hU} X \cong (\GL(n)/\hU) \times X
\end{equation} given by
$ [g,x] \mapsto (g\hU, gx). $
In this case the linearisation $L$ on $X$ extends to a very ample $\GL(n)$-linearisation 
$L^{(p,q)}$ on $\GL(n) \times_{\hU} X$ and its closure $\overline{\GL(n) \times_{\hU} X}$
using the inclusions
\[ \GL(n) \times_{\hU} X \hookrightarrow \GL(n) \times_{\hU} \mathbb{P}^N  \cong
(\GL(n)/{\hU}) \times \mathbb{P}^N \hookrightarrow \PP(\wsymk n) \times \PP^N\]
and  the very ample line bundle $\mathcal{O}_{\PP(\wsymk n)}(p) \otimes \mathcal{O}_{\PP^N}(q)$. Here 
the $\GL(n)$-invariants on $\GL(n) \times_{\hU} X$ are given by
\begin{equation} \label{name}  \bigoplus_{m \geq 0} H^0(\GL(n) \times_{\hU} X, L^{\otimes pm})^{\GL(n)} \cong
\bigoplus_{m \geq 0} H^0(X, L^{\otimes pm})^{\hU} = {\hat{\calo}}_{L^{\otimes p}}(X)^{\hU}.\end{equation}

Note that the normaliser $N_{\GL(n)}(\hU)$ of $\hU$ in $\GL(n)$ acts on the right on $\GL(n) \times_{\hU} X$ via
$$n[g,x] = [gn,n^{-1}x].$$
The central one-parameter subgroup $Z\GL(n)$ of $\GL(n)$ normalises $\hU$, and since $gn=ng$ for every $n \in Z\GL(n)$ and $g \in \GL(n)$, the right action of $Z\GL(n)$ on $\GL(n) \times_{\hU} X$ extends to a linear action on 
$\PP(\wsymk n) \times \PP^N$ given by $n(y,x) = (ny,x)$. Note also that the induced right action of $\hU$ on $\GL(n) \times_{\hU} X$ is trivial and its closure in $\PP(\wsymk n) \times \PP^N$ is trivial, and that the image of $Z\GL(n)$ in
$N_{\GL(n)}(\hU)/\hU$ is the same as the image of the one-parameter subgroup $\CC^*$ of $\tilde{U}$. However the induced right action of $\hU$ on the line bundle $\mathcal{O}_{\PP(\wsymk n)}(p) \otimes \mathcal{O}_{\PP^N}(q)$ is not trivial; it is multiplication by $(\weight_1 + \weight_2 + \cdots \weight_n)$ times the character $\hU \to \CC^*$ with kernel $U$.
Thus the weights $w$ and $\tilde{w}$ of the right actions of $Z\GL(n)$ and the one-parameter subgroup $\CC^* \leq \tilde{U}$ are related by
$$\tilde{w} = (1 + \weight_2 + \cdots + \weight_n)(w - n)$$
when we choose the basis vector $\mbox{diag}(1,1,\ldots,1)$ for $\mbox{Lie}Z\GL(n)$ and the basis vector
$$\mbox{diag}( 1 + \weight_2 + \cdots + \weight_n - n, 1 + \weight_2 + \cdots + \weight_n - n\weight_2, \ldots , 1 + \weight_2 + \cdots + \weight_n - n\weight_n)$$
for the Lie algebra of $\CC^* \leq \tilde{U}$.


When $\wsymk n$ is identified with the sum of summands
$$ \wedge^{k_1} (\CC^n) \otimes \wedge^{k_2} (\sym^{\weight_2} \CC^n) \otimes \cdots \otimes  \wedge^{k_n} (\sym^{\weight_n} \CC^n)   $$
over non-negative integers $k_1,\ldots, k_n$ such that $k_1 + \cdots + k_n = n$, the weight of the $Z\GL(n)$ action 
on the summand
$ \wedge^{k_1} (\CC^n) \otimes \wedge^{k_2} (\sym^{\weight_2} \CC^n) \otimes \cdots \otimes  \wedge^{k_n} (\sym^{\weight_n} \CC^n) $ is 
$$k_1\weight_1 + \ldots + k_n \weight_n.$$ 
Thus the weight of the right action of the one-parameter subgroup $\CC^* \leq \tilde{U}$ on this summand is
$$(k_1\weight_1 + \ldots + k_n \weight_n - n)(\weight_1 + \weight_2 + \cdots + \weight_n).$$ 
The weights for the $Z\GL(n)$ action on $\overline{\GL(n)/\hU}$ satisfy $ k_j + k_{j+1} + \cdots + k_n \leq n-j+1$ for $1 \leq j \leq n$ and therefore
\[\weight_{\min}=n\weight_1 \le k_1\weight_1 + \ldots + k_n \weight_n \le \weight_{\max}=\weight_1+\ldots +\weight_n\] 
where the minimum weight $\weight_{\min}=n\weight_1=n$ is taken on the summand spanned by $e_1\wedge \ldots \wedge e_n$ whereas the maximum weight $\weight_{\max}=\weight_1+\ldots +\weight_n$ is taken on the summand spanned by $v_1^{\weight _1} \wedge \ldots \wedge v_1^{\weight_n}$. In fact, since since $1 = \weight_1 < \weight_2 \leq \weight_3 \leq \ldots \leq \weight_n$ holds, this is the only summand where the value $\weight_{\max}$ is taken. 
Let $\weight_{\max-1}<\weight_{\max}$ denote the second highest weight for the $Z\GL(n)$ which must have the form 
$$\weight_{\max - 1} = \weight_1 + \weight_2 + \cdots + 2\weight_{i}+\weight_{i+2}+\cdots + \weight_n= \weight_{\max} - \weight_{i+1} + \weight_i$$
for some $1\le i \le n-1$. 

Let $\chi: \hU \to \CC^*$ be a character of $\hU$. We will want to choose $p$ and $\chi$ such that
$$ p (\weight_{\max} - n) (\weight_1 + \cdots + \weight_n) - \chi > 0 > p (\weight_{\max - 1} - n) (\weight_1 + \cdots + \weight_n) - \chi $$
or equivalently
\begin{equation} \label{welladapted}
\weight_{\max-1}-n < \frac{\chi}{p(\weight_1 + \cdots \weight_n)} < \weight_1 + \cdots + \weight_n - n. \end{equation}
We call rational characters $\chi/p$ with this property {\it well-adapted}. The linearisation of the action of $\hat{U}$ on $X$ with respect to $L^{\otimes p}$ can be twisted by $\chi$ so that the weights $\rho_j$ of $Z\GL(n)$ are replaced with $\rho_j p-\chi$ for $j=0,\ldots, s$. Let $L_\chi^{\otimes p}$ denote this twisted linearisation. 
 
\subsection{Hilbert-Mumford for the left action of $\SL(n)$}

Recall that $\SL(n) = \SU(n) B_{\SL(n)}$ where $B_{\SL(n)}$ is the standard (upper triangular) Borel subgroup of $\SL(n)$ and $\SU(n)$ is compact, so that
$$\overline{\GL(n)/\hU} = \overline{\SL(n)[\zdis_n] } = \SU(n) ( \overline{B_{\SL(n)}[\zdis_n]} ) .$$
Moreover 
 $\PP(\mathcal{W}_{\det})$
is $\SL(n)$-invariant, so 
$$ \PP(\mathcal{W}_{\det}) \cap \overline{\GL(n)/\hU} =  \SU(n) ( \PP(\mathcal{W}_{\det}) \cap \overline{B_{\SL(n)}[\zdis_n]} ) .$$
Now fix positive integers $\rho_1 \gg \rho_2 \gg \ldots \gg \rho_{n-1}>0$ and consider the left action of the one-parameter subgroup $\CC^*_\rho \leq \SL(n)$ given by 
\[ t \mapsto  \left(\begin{array}{ccccc} t^{\rho_1} & & & &\\ & t^{\rho_2} & & & \\ & & \ddots & & \\ & & & t^{\rho_{n-1}} & \\
& & & & t^{-(\rho_1 + \rho_2 + \cdots + \rho_{n-1})} \end{array}\right) \mbox{ for $t\in \CC^*$ }. \]
The weights of $\CC^*_\rho$ acting on $\PP(\mathcal{W}_{\det}) \cap \overline{B_{\SL(n)}[\zdis_n]}$ are all of the form $k_1 \rho_1 + k_2 \rho_2 + \cdots + k_{n-1} \rho_{n-1}$
where $k_1, \ldots , k_{n-1}\geq 0$. 

By Remark \ref{rmk3.2} and Proposition \ref{homogprop} $\overline{B_{\SL(n)}[\zdis_n]}$ is contained  in the subspace 
\[\PP^*=\PP(W_1 \wedge \ldots \wedge W_n) \subset \PP(\wsymk n)\]
where the subspaces
\[W_i=\mathrm{Span}_\CC(e_\tau: \mathrm{supp}(\tau)\subseteq \{1,\ldots i\}, \sum_{t\in \tau}\weight_t \le \weight_i)\subset \sym^\weight \CC^n\]  
are invariant under the upper Borel subgroup $B_n \subset \GL(n)$ which preserves the flag $\mathrm{Span}(e_1)\subset \mathrm{Span}(e_1,e_2) \ldots \subset \mathrm{Span}(e_1,\ldots, e_n)$. Here $\tau=(\tau_1\le \tau_2 \le \ldots \le \tau_r)$ is a sequence whose support $\mathrm{supp}(\tau)$ is the set of elements in $\tau$ and $e_{\tau}=\prod_{j\in \tau}e_j=\prod_{i=1}^r e_{\tau_i} \in \sym^{r}\CC^n$.
Basis elements of $\PP^*$ are parametrised by {\it admissible} sequences of partitions $\mathbf{\pi}=(\pi_1,\ldots, \pi_n)$. We call a sequence of partitions $\mathbf{\pi}=(\pi_1 \ldots \pi_n)\in \Pi^{\times n}$ admissible if
\begin{enumerate}
\item $\mathrm{supp}(\pi_l)\subseteq \{1,\ldots l\}$
\item $\sum_{t\in \pi_l}\weight_t \le \weight_l$ for $1\le l \le n$, and 
\item $\pi_l\neq\pi_m$ for $1\leq l\neq m\leq n$. 
\end{enumerate}
We will denote the set of admissible sequences of length $n$ by $\mathbf{\Pi}$. The corresponding basis element  is then $e_{\pi_1} \wedge \ldots \wedge e_{\pi_n} \in W_1 \wedge \ldots \wedge W_n$.

\begin{lemma} For $\rho=(\rho_1 \gg \rho_2 \gg \ldots \gg \rho_{n-1}>0)$ and $\mathbf{\pi} \in \mathbf{\Pi}$ the weight of the left $\CC^*_{\rho}$ action on $e_\pi$ is strictly positive unless $\mathbf{\pi}=(1,2,\ldots, n)$ corresponding to the basis element $e_{(1,2,\ldots, n)}=e_1\wedge \ldots \wedge e_n$.
\end{lemma}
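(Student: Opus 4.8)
The plan is to compute the $\CC^*_\rho$-weight of $e_\pi$ as an explicit linear combination of $\rho_1,\ldots,\rho_{n-1}$ whose coefficients record the multiplicities with which the indices $1,\ldots,n$ occur in $\pi$, and then to use the rapid decay $\rho_1 \gg \cdots \gg \rho_{n-1}$ to read off its sign from the leading coefficient. Write $\rho_n = -(\rho_1+\cdots+\rho_{n-1})$ and let $a_j$ denote the total number of times the index $j$ occurs among the parts of $\pi_1,\ldots,\pi_n$, counted with multiplicity. Since $e_{\pi_l}=\prod_{j\in\pi_l}e_j$ has weight $\sum_{j\in\pi_l}\rho_j$, summing over $l$ gives the weight of $e_\pi$ as
\[ W(\pi)=\sum_{j=1}^n a_j\rho_j=\sum_{j=1}^{n-1}(a_j-a_n)\rho_j. \]
Two elementary observations drive the argument. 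First, each $\pi_l$ is nonempty (the summands of $\symk$ all have positive degree), so $\sum_j a_j=\sum_l|\pi_l|\ge n$. Second, the index $n$ can occur only in $\pi_n$ by the support condition, and $\sum_{t\in\pi_n}\weight_t\le\weight_n$ with every $\weight_t>0$ forces it to occur at most once there; hence $a_n\in\{0,1\}$.

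I would then split into two cases. If $a_n=0$, every coefficient $a_j-a_n=a_j$ is $\ge 0$ and $\sum_{j=1}^{n-1}a_j\ge n>0$, so $W(\pi)>0$ immediately because $\rho_1,\ldots,\rho_{n-1}>0$. If $a_n=1$, then $W(\pi)=\sum_{j=1}^{n-1}(a_j-1)\rho_j$, and since $\pi\neq(1,2,\ldots,n)$ not every $a_j$ can equal $1$ (if $a_j=1$ for all $j$ then every $\pi_l$ is a singleton, and the support conditions $\mathrm{supp}(\pi_l)\subseteq\{1,\ldots,l\}$ force $\pi_l=(l)$ inductively). Let $j_0$ be the smallest index with $a_{j_0}\neq 1$. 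The crucial step is to rule out $a_{j_0}=0$: if $a_{j_0}=0$, then indices $1,\ldots,j_0-1$ each occur exactly once in total and $j_0$ never occurs, so the $j_0$ nonempty partitions $\pi_1,\ldots,\pi_{j_0}$, whose supports all lie in $\{1,\ldots,j_0-1\}$, together contain at most $\sum_{j<j_0}a_j=j_0-1$ parts, contradicting $\sum_{l=1}^{j_0}|\pi_l|\ge j_0$. Hence $a_{j_0}\ge 2$.

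Therefore the first nonzero coefficient of $W(\pi)$, namely $a_{j_0}-1\ge 1$, is strictly positive. Because there are only finitely many admissible sequences, the multiplicities $a_j$ are uniformly bounded, so choosing the ratios $\rho_{j}/\rho_{j+1}$ large enough (as encoded in $\rho_1\gg\cdots\gg\rho_{n-1}$) makes the leading term $(a_{j_0}-1)\rho_{j_0}$ dominate $\bigl|\sum_{j>j_0}(a_j-1)\rho_j\bigr|$, giving $W(\pi)>0$. I expect the pigeonhole step in the case $a_n=1$ to be the main obstacle: the whole point is that a missing early index ($a_{j_0}=0$) is impossible, because the first $j_0$ nonempty partitions cannot be packed into the $j_0-1$ index-slots that remain available.
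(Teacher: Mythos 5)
Your proof is correct, but it organises the computation differently from the paper. The paper groups the weight $\rho_\pi=\sum_{l}\sum_{j\in\pi_l}\rho_j$ by the factors $\pi_l$: for each $l\le n-1$ the nonempty multiset $\pi_l$ is supported in $\{1,\dots,l\}$, so $\sum_{j\in\pi_l}\rho_j\ge\rho_l$ with equality only for $\pi_l=(l)$, while $\sum_{j\in\pi_n}\rho_j\ge-(\rho_1+\cdots+\rho_{n-1})$ with equality only for $\pi_n=(n)$; summing these term-by-term lower bounds gives $\rho_\pi\ge 0$ with equality exactly at $\pi=(1,2,\dots,n)$. You instead group by index, writing the weight as $\sum_{j=1}^{n-1}(a_j-a_n)\rho_j$ in terms of the total multiplicities $a_j$, and then need the extra pigeonhole step to show that when $a_n=1$ the first coefficient $a_{j_0}-1$ that is nonzero is in fact positive, before invoking leading-term dominance. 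Both arguments use only conditions (1) and (2) of admissibility (nonemptiness of the $\pi_l$, the support condition, and the weight bound forcing $a_n\le 1$). What the paper's factor-by-factor bound buys is that it needs only $\rho_1>\rho_2>\cdots>\rho_{n-1}>0$ and gives the positivity claim together with the equality case in one stroke; what your version buys is an explicit formula for the weight and a self-contained combinatorial reason (the packing contradiction at a missing early index) for why it cannot be negative, at the cost of genuinely using the asymptotic condition $\rho_1\gg\cdots\gg\rho_{n-1}$ and the finiteness of $\mathbf{\Pi}$. Your argument is complete as written; in particular you correctly note that $j_0\le n-1$ exists because $a_j=1$ for all $j$ would force $\pi=(1,2,\dots,n)$.
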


\begin{proof}
The weight $\rho_\pi$ of the left $\CC^*_{\rho}$ action on $e_{\mathrm{\pi}}=e_{\pi_1}\wedge \ldots \wedge e_{\pi_n}$ is 
\[\rho_{\mathrm{\pi}}=\sum_{i=1}^n \sum_{j \in \pi_l} \rho_j \ge \sum_{i=1}^{n-1} \rho_i+\sum_{j \in \pi_n} \rho_j\]
whenever $\rho_1 \gg \rho_2 \gg \ldots \gg \rho_{n-1}>0$ holds by (1) and (2) in the definition of admissible sequences. Moreover, equality holds if and only if $\pi_l=(l)$ for $1\le l \le n-1$. Finally $\sum_{j \in \pi_n} \rho_j\ge -(\rho_1+\ldots +\rho_{n-1})$ with equality if and only if $\pi_n=(n)$, otherwise $e_n$ does not appear in $e_{\pi_n}$. 
\end{proof}

Let $\eta_{\min} = \eta_1 < \cdots < \eta_\rho = \eta_{\max}$
be the weights of the action of $\CC^*_\rho$ on $X$ with respect to the linearisation $L$ of the $\SL(n)$ action. Then if $q\eta_{\min} + p n > 0$ it follows that every point of $(\PP(\mathcal{W}_{\det})\times X) \cap \overline{B_{\SL(n)}[\zdis_n] \times X} )$ is unstable for the left action of this one parameter subgroup of $\SL(n)$ with respect to the linearisation $L^{(p,q)}$ (or equivalently $L^{(p,q)}_\chi$).  It follows that 

\begin{lemma} \label{lempdet}
If  $p > - q\eta_{\min}/n$ then every point of
\[(\PP(\mathcal{W}_{\det}) \times X) \cap \overline{(\GL(n)/\hU)  \times X} =  \SU(n) ( (\PP(\mathcal{W}_{\det}) \times X) \cap \overline{B_{\SL(n)}[\zdis_n]} \times X )\]
is unstable for the left action of  $\SL(n)$ with respect to the linearisation $L^{(p,q)}$ (or equivalently $L^{(p,q)}_\chi$).
\end{lemma}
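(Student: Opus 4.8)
The plan is to establish instability via the Hilbert--Mumford criterion applied to the explicit one-parameter subgroup $\CC^*_\rho$, after first reducing from $\SL(n)$ to its Borel subgroup $B_{\SL(n)}$. First I would note that instability is an $\SL(n)$-invariant condition and that both $\PP(\mathcal{W}_{\det})$ and $X$ are $\SL(n)$-invariant: the projection $\pi^{\det}$ onto the one-dimensional determinant summand $\wedge^n\CC^n$ is $\SL(n)$-equivariant and $\SL(n)$ acts trivially on $\wedge^n\CC^n$. Together with the decomposition $\SL(n)=\SU(n)\,B_{\SL(n)}$ and the compactness of $\SU(n)$ (so that $\SU(n)$-sweeps of closed sets remain closed), this gives the displayed identity and reduces the claim to showing that every pair $(y,x)$ with $y\in\PP(\mathcal{W}_{\det})\cap\overline{B_{\SL(n)}[\zdis_n]}$ and $x\in X$ is unstable for $\CC^*_\rho$; instability then propagates to the whole $\SU(n)$-sweep by invariance.

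For such a pair I would compute the Hilbert--Mumford weight of $\CC^*_\rho$ with respect to the product linearisation $L^{(p,q)}=\mathcal{O}_{\PP(\wsymk n)}(p)\otimes\mathcal{O}_{\PP^N}(q)$. Writing $w_{\mathcal{W}}(y)$ and $w_X(x)$ for the minimal $\CC^*_\rho$-weights occurring in $y$ and $x$ respectively, the pair lies in the null cone --- hence is unstable --- as soon as
\[ p\,w_{\mathcal{W}}(y)+q\,w_X(x)>0 . \]
By the preceding lemma every $\CC^*_\rho$-weight occurring on $\PP(\mathcal{W}_{\det})\cap\overline{B_{\SL(n)}[\zdis_n]}$ is strictly positive, because the unique summand carrying $\CC^*_\rho$-weight $0$ is the determinant line $\CC\,(e_1\wedge\cdots\wedge e_n)$, which is exactly the summand deleted in $\mathcal{W}_{\det}$; moreover the admissible basis vectors $e_\pi$ of $\PP^*=\PP(W_1\wedge\cdots\wedge W_n)$ surviving in $\mathcal{W}_{\det}$ carry weight at least $n$, so $w_{\mathcal{W}}(y)\ge n$. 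On the other side $w_X(x)\ge\eta_{\min}$ by the definition of $\eta_{\min}$. Hence the left-hand side is at least $pn+q\eta_{\min}$, which is strictly positive precisely under the hypothesis $p>-q\eta_{\min}/n$.

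Finally, twisting the linearisation by the character $\chi$ affects only the weights of the right $Z\GL(n)$-action (equivalently the action of $\CC^*\le\tilde U$) and is invisible to the left $\SL(n)$-action, so the whole argument applies verbatim with $L^{(p,q)}_\chi$ in place of $L^{(p,q)}$; this gives the parenthetical ``equivalently'' clause. The hard part will be the weight bookkeeping behind the estimate $w_{\mathcal{W}}(y)\ge n$: one must confirm, using the description of $\overline{B_{\SL(n)}[\zdis_n]}$ inside $\PP^*$ and the admissibility constraints on the sequences $\pi$ (so that the weights take the form $k_1\rho_1+\cdots+k_{n-1}\rho_{n-1}$), that apart from the excluded determinant line every surviving monomial $e_\pi$ contributes $\CC^*_\rho$-weight at least $n$ in the chosen scaling of $\rho$. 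This is precisely the content supplied by the preceding lemma, so once it is in hand the remaining steps are routine.
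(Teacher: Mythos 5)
Your proposal is correct and follows essentially the same route as the paper: the reduction via $\SL(n)=\SU(n)\,B_{\SL(n)}$ and the $\SL(n)$-invariance of $\PP(\mathcal{W}_{\det})$, the Hilbert--Mumford test against the explicit one-parameter subgroup $\CC^*_\rho$ with $\rho_1\gg\cdots\gg\rho_{n-1}>0$, the strict positivity of all $\CC^*_\rho$-weights on $\PP(\mathcal{W}_{\det})\cap\overline{B_{\SL(n)}[\zdis_n]}$ supplied by the lemma on admissible sequences, and the resulting bound $p\,w_{\mathcal{W}}(y)+q\,w_X(x)\ge pn+q\eta_{\min}>0$. The observation that twisting by $\chi$ only alters the right $\CC^*\le\tilde U$-weights and so is invisible to the left $\SL(n)$-action likewise matches the paper's parenthetical remark.
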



\subsection{Hilbert-Mumford for the right action of $\CC^* \leq \tilde{U}$}

Recall 
that the boundary  of $\GL(n) \times_{\hU} X = (\GL(n)/{\hU}) \times X$ in the projective completion \[\overline{\GL(n)/{\hU}} \times X \subset \PP(\wsymk n) \times X\]  is contained in the union of  
$\PP(\mathcal{W}_{v_1}) \times X$ and $\PP(\mathcal{W}_{\det}) \times X$.
If we twist the linear action of $\tilde{U}$ on $X$ with respect to $L^{\otimes p}$ which extends to a linear action of $\SL(n)$ by a character $\chi$, then the induced right action of the one parameter subgroup $\CC^* \leq \tilde{U}$ on $\overline{\GL(n)/{\hU}} \times X$ with respect to the line bundle $\mathcal{O}_{\PP(\wsymk n)}(p) \otimes \mathcal{O}_{\PP^N}(q)$ has weights
$$ p ( k_1 \weight_1 + \cdots + k_n \weight_n - n) (\weight_1 +  \cdots + \weight_n) - \chi.$$ 
If the rational character $\chi/p$ is well adapted in the sense of (\ref{welladapted}) then
 the twisted $\SL(n) \times \CC^*$-linearisation  $L^{(p,q)}_{\chi}$ on $\overline{\GL(n)/\hU} \times X$ has strictly negative weights under the right action of the one parameter subgroup $\CC^* \leq \tilde{U}$ on $\PP(\mathcal{W}_{v_1}) \times X$ and therefore all points of $\PP(\mathcal{W}_{v_1}) \times X$ are unstable with respect to this linear action of 
$\SL(n) \times \CC^*$.
We know from Corollary \ref{corbound}
that the boundary of the orbit $\GL(n)[\zdis_n]$ in $\PP(\mathcal{W})$ is contained in the union of the subspaces $\PP(\mathcal{W}_{v_1})$ and $\PP(\mathcal{W}_{\det})$. So combining this with Lemma \ref{lempdet} we obtain

\begin{prop} \label{propbound}
If $p >> q > 0$ and the rational character $\chi/p$ is well adapted in the sense of (\ref{welladapted}), then the boundary of the closure $\overline{\GL(n)/\times_{\hU} X} \cong \overline{\GL(n)/\hU } \times X$ of $\GL(n) \times_{\hU} X$ in
$  \PP(\wsymk n) \times X $ is unstable for the linear action of  $\SL(n) \times \CC^* = \SL(n) \times (\tilde{U}/U)$ with respect to the linearisation $L^{(p,q)}_\chi$.
\end{prop}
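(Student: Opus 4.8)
The plan is to read off the boundary from Corollary~\ref{corbound} and then to destabilise its two pieces separately, one by a one-parameter subgroup of the left $\SL(n)$-factor and the other by the right $\CC^*=\tilde U/U$ factor, appealing throughout to the Hilbert--Mumford criterion for the reductive group $\SL(n)\times\CC^*$. The observation I would use repeatedly is that a point is unstable for $\SL(n)\times\CC^*$ as soon as it is destabilised by a single one-parameter subgroup, and in particular by one lying in either factor $\SL(n)\times\{1\}$ or $\{1\}\times\CC^*$.

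First I would identify the boundary explicitly. Under the isomorphism $\overline{\GL(n)\times_{\hU}X}\cong\overline{\GL(n)/\hU}\times X$ the boundary is $(\overline{\GL(n)/\hU}\setminus\GL(n)/\hU)\times X$, that is, the product with $X$ of the boundary of the orbit $\GL(n)[\zdis_n]$ in $\PP(\mathcal{W})$. By Corollary~\ref{corbound} this boundary lies in $(\PP(\mathcal{W}_{v_1})\cup\PP(\mathcal{W}_{\det}))\times X$, so it suffices to prove that the two closed sets $\PP(\mathcal{W}_{v_1})\times X$ and $(\PP(\mathcal{W}_{\det})\times X)\cap(\overline{\GL(n)/\hU}\times X)$ are each unstable for the linearisation $L^{(p,q)}_\chi$.

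The first set is handled by the weight computation carried out immediately before the proposition: since $\chi/p$ is well adapted in the sense of~\eqref{welladapted}, the right action of the one-parameter subgroup $\CC^*\leq\tilde U$ has strictly negative weights on $\PP(\mathcal{W}_{v_1})\times X$ with respect to $L^{(p,q)}_\chi$, so this one-parameter subgroup of $\{1\}\times\CC^*$ destabilises every point there. The second set is handled by Lemma~\ref{lempdet}: the hypothesis $p\gg q>0$ forces $p>-q\eta_{\min}/n$, and the lemma then guarantees that the left one-parameter subgroup $\CC^*_\rho\leq\SL(n)\times\{1\}$ destabilises every point of $(\PP(\mathcal{W}_{\det})\times X)\cap\overline{(\GL(n)/\hU)\times X}$. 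Combining the two observations, every point of the boundary is destabilised by a one-parameter subgroup in one of the factors, hence is unstable for $\SL(n)\times\CC^*$, as required.

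Because both instability statements are already established in the preceding lemmas, the proposition is essentially a synthesis and I do not expect a serious obstacle here. The only point requiring care is the bookkeeping of the two constraints on $(p,q,\chi)$, namely well-adaptedness of $\chi/p$ and the strict inequality $p>-q\eta_{\min}/n$ needed for Lemma~\ref{lempdet}; both hold under the stated hypotheses that $\chi/p$ is well adapted and $p\gg q>0$. The genuine mathematical content lies in those two earlier Hilbert--Mumford weight calculations for the left $\SL(n)$ and right $\CC^*$ actions rather than in their combination at this stage.
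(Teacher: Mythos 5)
Your proposal is correct and follows essentially the same route as the paper: the boundary is split via Corollary \ref{corbound} into the pieces lying in $\PP(\mathcal{W}_{v_1})\times X$ and $\PP(\mathcal{W}_{\det})\times X$, the former destabilised by the right $\CC^*\leq\tilde U$ using well-adaptedness and the latter by the left one-parameter subgroup of $\SL(n)$ via Lemma \ref{lempdet}. The paper's own proof is exactly this synthesis, so no further comment is needed.
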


Recall that $\GL(n)/\hU = \SL(n) /(\hU \cap \SL(n))$ where $\hU \cap \SL(n)$ is a finite extension of $U$ which is contained in $\tilde{U}$ with $\tilde{U}/(\hat{U} \cap \SL(n)) \cong \CC^*$. It thus follows immediately from Theorem \ref{thm:geomcor}  that we have

\begin{theorem} \label{thmbound}
Let $X$ be a projective variety acted on linearly by $\tilde{U}$, and suppose that the action extends to a linear action of $\SL(n)$  with respect to an ample linearisation. If the linearisation of the $\tilde{U}$-action is twisted by a well-adapted rational character $\chi/p$ for sufficiently divisible $p$, then
\begin{enumerate}
\item the algebra of invariants 
$\bigoplus_{k \geq 0} H^0( X,L^{\otimes kp})^{\tilde{U}}$ is finitely generated;
\item the enveloping quotient $X/\!/\tilde{U} \simeq (\overline{\GL(n)/\hU} \times X) /\!/_{L^{(p,1)}_\chi} (\SL(n)\times \CC^*)\simeq \mathrm{Proj}(\oplus_{k \geq 0} H^0( X,L^{\otimes kp})^{\tilde{U}})$;
\item  the morphism 
\[ \phi:X^{ss,\tilde{U}} \rightarrow X/\!/\tilde{U}\]
is surjective and $X/\!/\tilde{U}$ is a categorical quotient of $X^{ss,\tilde{U}}$ with $\phi(x) = \phi(y)$ if and only if the closures of the $\tilde{U}$-orbits of $x$ and $y$ meet in $X^{ss,\tilde{U}}$.
\end{enumerate} 
\end{theorem}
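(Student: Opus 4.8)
The plan is to deduce the theorem directly from Theorem~\ref{thm:geomcor}, using the boundary analysis already established. I would take the reductive group to be $G = \SL(n)$ and the acting group to be $H = \tilde{U}$, but rather than quotient by the unipotent radical $U$ I would quotient by $U_H = \hU \cap \SL(n)$, the finite extension of $U$ inside $\SL(n)$. This is legitimate because, by the remark following Theorem~\ref{thm:fgcriteriongeneral}, one needs only that $U_H$ is normal in $H$ with reductive quotient; here $\hU \cap \SL(n) \unlhd \tilde{U}$ and $R := \tilde{U}/(\hU \cap \SL(n)) \cong \CC^*$. With this choice $G/U_H = \SL(n)/(\hU \cap \SL(n)) = \GL(n)/\hU$, and the inclusion $\tilde{U} \hookrightarrow \SL(n)$ is evidently $U_H$-faithful.

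Next I would identify the remaining ingredients of Theorem~\ref{thm:geomcor}. Since the $\tilde{U}$-action on $X$ extends to $\SL(n)$, the isomorphism \eqref{basic} (the analogue of \eqref{9Febiso}) gives $G \times_{U_H} X \cong (G/U_H) \times X = (\GL(n)/\hU) \times X$, with projective completion $\overline{\GL(n)/\hU} \times X \subseteq \PP(\wsymk n) \times X$. I would take the $G \times R = \SL(n) \times \CC^*$-linearisation $L'$ to be $L^{(p,q)}_\chi$, with $\SL(n)$ acting on the left and $\CC^* = \tilde{U}/U$ on the right (its image in $N_{\GL(n)}(\hU)/\hU$ coincides with that of $Z\GL(n)$, so the two right $\CC^*$-actions induce the same action on $\GL(n) \times_{\hU} X$, and the invariant-theoretic identification underlying Theorem~\ref{thm:geomcor} then matches the $\SL(n) \times \CC^*$-invariants on the completion with the twisted $\tilde{U}$-invariants on $X$). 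I would then verify the standing hypotheses: $L^{(p,q)}_\chi$ is ample because it is the restriction of $\mathcal{O}_{\PP(\wsymk n)}(p) \otimes \mathcal{O}_{\PP^N}(q)$ with $p, q > 0$ to a closed subvariety and twisting by a character does not affect ampleness; and the codimension-one boundary components, cut out in $\overline{\GL(n)/\hU}$ by the vanishing of the Plücker coordinates defining $\PP(\mathcal{W}_{v_1})$ and $\PP(\mathcal{W}_{\det})$ (Corollary~\ref{corbound}), have integral multiples that are Cartier.

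The decisive input is Proposition~\ref{propbound}: for $p \gg q > 0$ and $\chi/p$ well-adapted in the sense of \eqref{welladapted}, the entire boundary of $\overline{\GL(n)/\hU} \times X$ is unstable for $L^{(p,q)}_\chi$ under $\SL(n) \times \CC^*$. By the definition of the unstable locus, every $\SL(n) \times \CC^*$-invariant section of every positive tensor power of $L^{(p,q)}_\chi$ vanishes on the boundary, which is precisely the hypothesis required by Theorem~\ref{thm:geomcor}. Applying that theorem (with $q = 1$ for part (2)) yields its conclusions (1)--(5), which specialise to the three assertions here: finite generation is (1), the identification of $X/\!/\tilde{U}$ with the GIT quotient is (2), and the surjectivity of $\phi$, its being a categorical quotient, and the orbit-closure separation criterion are (3)--(4). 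The remark after Theorem~\ref{thm:geomcor} ensures that no nonsingularity or normality hypotheses on $X$ or on the completion are needed, so the assumption that $X$ is merely projective suffices.

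The genuine content has already been absorbed into Proposition~\ref{propbound}, Corollary~\ref{corbound} and Lemma~\ref{lempdet}, so what remains is essentially bookkeeping. One must confirm that $L^{(p,q)}_\chi$ can be presented in the form $L'[N\sum_j D_j]$ demanded by Theorem~\ref{thm:geomcor} (increasing $p$ relative to $q$ is what amplifies the boundary divisor), and that the ranges ``$p$ sufficiently divisible'', ``$p \gg q$'' and ``$\chi/p$ well-adapted'' can be satisfied simultaneously. I expect the main obstacle to be the careful matching of the two right $\CC^*$-actions and of the twist by $\chi$ with the finite extension $\hU \cap \SL(n)$, so as to guarantee throughout that the graded ring of $\SL(n) \times \CC^*$-invariants on $\overline{\GL(n)/\hU} \times X$ genuinely coincides, after the twist, with $\bigoplus_{k \ge 0} H^0(X, L^{\otimes kp})^{\tilde{U}}$.
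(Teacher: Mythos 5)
Your proposal is correct and follows essentially the same route as the paper: the paper likewise deduces Theorem \ref{thmbound} directly from Theorem \ref{thm:geomcor} by taking $G=\SL(n)$, $H=\tilde{U}$, and the normal subgroup $\hU\cap\SL(n)$ (a finite extension of $U$ with $\tilde{U}/(\hU\cap\SL(n))\cong\CC^*$) in place of the unipotent radical, with Proposition \ref{propbound} supplying the instability of the boundary of $\overline{\GL(n)/\hU}\times X$ for $L^{(p,q)}_\chi$. Your write-up is in fact more explicit than the paper's one-line deduction about the hypotheses of Theorem \ref{thm:geomcor} (ampleness, the Cartier condition on the boundary divisors, and the matching of the two right $\CC^*$-actions), which the paper leaves implicit in \S5.2.
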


\subsection{The action of $\tilde{U}$ on $X \times \PP^1$}

Now let us consider the diagonal action of $\tilde{U}$ on $X \times \PP^1$ where $\tilde{U}$ acts on $\PP^1$ linearly with respect to $\calo_{\PP^1}(1)$ by
$$ \tilde{u} [ x_0:x_1] = [\chi_0(\tilde{u})x_0: x_1]$$
where $\chi_0: \tilde{U} \to \CC^*$ is the character with kernel $U$ given by 
\[ \chi_0 \left(\begin{array}{cccc} t^{n\weight_1 -(\weight_1+ \cdot + \weight_n)} & & & \\ & t^{n\weight_2-(\weight_1+ \cdots + \weight_n)} & & \\ & & \ddots & \\ & & & t^{n\weight_n - (\weight_1+ \cdots + \weight_n)} \end{array}\right) = t. \]
We can adapt the arguments of $\S$5.3 and $\S$5.4 to the induced linear action of $\SL(n) \times \CC^*$ on 
$$ \overline{\GL(n)/\hU} \times X \times \PP^1 \subseteq  \PP(\wsymk n) \times \PP^N \times \PP^1
$$
for the linearisation $L^{(p,q,r)}_\chi$ defined with respect to the line bundle $\mathcal{O}_{\PP(\wsymk n)}(p) \otimes \mathcal{O}_{\PP^N}(q) \otimes \mathcal{O}_{\PP^1}(r)$ where the action of $\tilde{U}$ on $X$ extends to a linear action of $\SL(n)$ but is then twisted by a rational character $\chi/p$. Now we want to choose $\chi, p,q$ and $r$ such that  $\chi/p$ is well adapted in the sense of (\ref{welladapted}), and $p > -q \eta_{\min}/n$ as before, and also $r > - q \eta_{\min}$ in order that all points of 
$\overline{\GL(n)/\hU} \times X \times \{ \infty \}$ will be unstable for the action of $\SL(n) \times \CC^*$. Note that $\eta_{\min} <0$ unless the action of $\tilde{U}$ is trivial, and then these two conditions will be satisfied if $p>>q$ and $r>> q$. So the proofs of Proposition \ref{propbound} and Theorem  \ref{thmbound} give us

\begin{prop} \label{propbound2}
If $p >> q > 0$ and $r>> q$ and the rational character $\chi/p$ is well adapted in the sense of (\ref{welladapted}), then the boundary of the closure $ \overline{\GL(n)/\hU } \times X \times \PP^1$ of $\GL(n) \times_{\hU} (X \times \CC)$ in
$  \PP(\wsymk n) \times X \times \PP^1$ is unstable for the linear action of  $\SL(n) \times \CC^* = \SL(n) \times (\tilde{U}/U)$ with respect to the linearisation $L^{(p,q,r)}_\chi$.
\end{prop}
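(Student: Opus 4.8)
The plan is to run the proof of Proposition \ref{propbound} essentially verbatim, treating the extra $\PP^1$-factor as a controlled perturbation, and then to deal separately with the one genuinely new boundary stratum. First I would record the boundary decomposition: writing $\{\infty\}=\PP^1\setminus\CC$, the boundary of $\GL(n)\times_{\hU}(X\times\CC)$ inside $\overline{\GL(n)/\hU}\times X\times\PP^1$ is contained in
\[
\bigl((\PP(\mathcal{W}_{v_1})\cup\PP(\mathcal{W}_{\det}))\times X\times\PP^1\bigr)\ \cup\ \bigl(\overline{\GL(n)/\hU}\times X\times\{\infty\}\bigr),
\]
using Corollary \ref{corbound} for the first two pieces. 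It then suffices to prove that each of these three strata is unstable for $\SL(n)\times\CC^*$ with respect to $L^{(p,q,r)}_\chi$.

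The two strata carried over from Proposition \ref{propbound} should be destabilised by exactly the same one-parameter subgroups as there. The left subgroup $\CC^*_\rho\leq\SL(n)$ of $\S$5.3 acts trivially on $\PP^1$, so on $\PP(\mathcal{W}_{\det})\times X\times\PP^1$ the summand $\mathcal{O}_{\PP^1}(r)$ contributes weight zero and the computation is identical to Lemma \ref{lempdet}; this stratum is unstable once $p>-q\eta_{\min}/n$. For $\PP(\mathcal{W}_{v_1})\times X\times\PP^1$ I would use the right subgroup $\CC^*\leq\tilde{U}$ of $\S$5.4, which acts trivially on the $X$-factor. Well-adaptedness makes its weights on $\PP(\mathcal{W}_{v_1})$ strictly negative, and the one thing to verify is that, with the sign convention fixed by $\chi_0$, its weights on $\mathcal{O}_{\PP^1}(r)$ are non-positive at every point of $\PP^1$. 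Hence all weights remain strictly negative over this whole stratum and no condition on $r$ is needed here.

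The new stratum $\overline{\GL(n)/\hU}\times X\times\{\infty\}$ is where the argument really differs. I would first reduce to the open orbit: since $\GL(n)=\SL(n)\cdot Z\GL(n)$ and $Z\GL(n)$ maps onto the right $\CC^*$, the open orbit $\GL(n)/\hU$ is a single $\SL(n)\times\CC^*$-orbit, so every point $(w,x,\infty)$ with $w$ in the open orbit lies in the $\SL(n)\times\CC^*$-orbit of some $(\zdis_n,x',\infty)$; meanwhile the points with $w$ in the boundary of $\GL(n)/\hU$ are already covered by the first two strata, and open-orbit points with $\PP^1$-coordinate in $\CC$ lie in the interior. It therefore suffices to destabilise $(\zdis_n,x,\infty)$ for every $x\in X$. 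Here $\infty$ is a fixed point of the right $\CC^*$, over which the weight on $\mathcal{O}_{\PP^1}(r)$ is $\pm r$, so I would use a one-parameter subgroup combining this right $\CC^*$ (inverted as needed) with the left $\CC^*_\rho$. Choosing $\rho_1\gg\cdots\gg\rho_{n-1}$ large forces the determinant summand $e_1\wedge\cdots\wedge e_n$ of $\zdis_n$ to carry the dominant weight among the support of $\zdis_n$, so that the binding Hilbert--Mumford inequality is read off on the direction $(e_1\wedge\cdots\wedge e_n,\ \eta_{\min},\ \infty)$; it is strict precisely when $r>-q\eta_{\min}$.

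Finally I would check that the three numerical requirements can hold together: $\chi/p$ well-adapted in the sense of \eqref{welladapted}, $p>-q\eta_{\min}/n$, and $r>-q\eta_{\min}$. Since $\eta_{\min}<0$ whenever the $\tilde{U}$-action is non-trivial, all three are satisfied by taking $p\gg q$, $r\gg q$ and $\chi/p$ in the well-adapted interval, with no constraint relating $p$ and $r$; Proposition \ref{propbound2} then follows just as Proposition \ref{propbound} did. I expect the main obstacle to be the stratum over $\infty$: one must carry out the Hilbert--Mumford computation for the combined left-and-right one-parameter subgroup while correctly tracking both the sign of the $\chi_0$-weight on $\mathcal{O}_{\PP^1}(r)$ at $\infty$ and the twist by $\chi$, and confirm that the determinant summand is the binding one, so that only the mild bound $r>-q\eta_{\min}$ is required rather than one of the form $r\gg p$.
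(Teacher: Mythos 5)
Your overall strategy is the one the paper intends: split the boundary into $\PP(\mathcal{W}_{v_1})\times X\times\PP^1$, $\PP(\mathcal{W}_{\det})\times X\times\PP^1$ and $\overline{\GL(n)/\hU}\times X\times\{\infty\}$, reuse the one-parameter subgroups of $\S$5.3--5.4 for the first two strata, and introduce the condition on $r$ only for the third. Your reduction of the third stratum to points $(\zdis_n,x,\infty)$ via transitivity of $\SL(n)\times\CC^*$ on the open orbit is correct and is a genuine addition of detail, since the paper offers no computation here beyond asserting the inequalities.

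The gap is in your Hilbert--Mumford computation over $\infty$. The coordinate of $\zdis_n$ on the summand $\wedge^n\CC^n$ is the determinant, which is nonzero at every point of $\GL(n)/\hU$ and spans the trivial $\SL(n)$-subrepresentation of $\wsymk{n}$; hence \emph{every} left one-parameter subgroup, in particular $\CC^*_\rho$ however large you take $\rho_1\gg\cdots\gg\rho_{n-1}$, acts with weight exactly zero on that coordinate. You cannot make the determinant summand ``carry the dominant weight'' by enlarging $\rho$: enlarging $\rho$ changes nothing on that summand while driving the $X$-factor weights further negative. Meanwhile the right $\CC^*$, after the well-adapted twist, acts on that summand with weight $-\chi$, where $\chi$ lies between $p(\weight_{\max-1}-n)(\weight_1+\cdots+\weight_n)$ and $p(\weight_{\max}-n)(\weight_1+\cdots+\weight_n)$ and is therefore of order $p$, not $q$. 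So for any one-parameter subgroup $(\lambda_L,t^b)$ of $\SL(n)\times\CC^*$ the weight in the direction $(e_1\wedge\cdots\wedge e_n,\eta,\infty)$ is $q\,w_{\lambda_L}(\eta)+b(\varepsilon r-\chi)$, with $\varepsilon\in\{0,1\}$ the weight of the right $\CC^*$ at $\infty$; forcing this to be positive for arbitrary $x\in X$, for which $w_{\lambda_L}(\eta)$ can be as small as $\eta_{\min}<0$, yields a condition coupling $r$ to $\chi$ and hence to $p$ (of the form $r>\chi-q\eta_{\min}$ or $r<\chi+q\eta_{\min}$ according to the sign of $b$, or no condition on $r$ at all if $\varepsilon=0$), and not the $p$-independent inequality $r>-q\eta_{\min}$ that you read off; your inequality is exactly what one obtains by forgetting the twist on the $\PP(\wsymk{n})$-factor. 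The same sign bookkeeping affects your treatment of $\PP(\mathcal{W}_{v_1})\times X\times\PP^1$: with the convention $\tilde u[x_0:x_1]=[\chi_0(\tilde u)x_0:x_1]$ the right $\CC^*$ has weights $0$ and $r$ at the two fixed points of $\PP^1$, not two non-positive weights, so over the weight-$r$ fixed point that stratum also imposes a constraint relating $r$ to $\chi-p(\weight_{\max-1}-n)(\weight_1+\cdots+\weight_n)$ rather than none. To repair the argument you need to fix the orientation of the $\PP^1$-action once and for all, carry the twist $\chi$ through the determinant direction, and check that a single choice of $r$ satisfies the resulting, mutually competing, upper and lower bounds from the three strata; the conditions $p\gg q$, $r\gg q$ and $r>-q\eta_{\min}$ do not suffice as stated, and since the paper asserts them without computation you should not treat them as the target to be recovered.
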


\begin{defn}
Let $X^{\hat{s},U}$ denote the $U$-invariant open subset of $X$ such that $$\{ [\zdis_n] \} \times X^{\hat{s},U} \times \{[1:1]\} = (\{ [\zdis_n] \} \times X \times \{[1:1]\}) \cap (\PP(\wsymk n) \times X \times \PP^1)^{s,\SL(n) \times \CC^*}$$ where $(\PP(\wsymk n) \times X \times \PP^1)^{s, \SL(n) \times \CC^*}$ denotes the stable subset of $\PP(\wsymk n) \times X \times \PP^1$ with respect to  the linearisation $L^{(p,1,r)}_\chi$.
\end{defn}

\begin{theorem} \label{thmbound2}
Let $X$ be a projective variety acted on linearly by $\tilde{U}$, and suppose that the action extends to a linear action of $\SL(n)$  with respect to an ample linearisation. If the linearisation of the diagonal action of $\tilde{U}$ on $X \times \PP^1$ is twisted by a well-adapted rational character $\chi/p$ for sufficiently divisible $p$, then
\begin{enumerate}
\item the algebra of invariants 
$\bigoplus_{k \geq 0} H^0( X \times \PP^1,L^{\otimes kp})^{\tilde{U}}$ is finitely generated;
\item the enveloping quotient $(X \times \PP^1)/\!/\tilde{U} \simeq (\overline{\GL(n)/\hU} \times X \times \PP^1) /\!/ (\SL(n)\times \CC^*)\simeq \mathrm{Proj}(\oplus_{k \geq 0} H^0( X \times \PP^1,L^{\otimes kp} \otimes \calo_{\PP^1}(r))^{\tilde{U}})$ for $r>> 1$;
\item  there is a surjective $\tilde{U}$-invariant morphism 
\[ \phi:(X \times \CC)^{ss,\tilde{U}} \rightarrow (X \times \PP^1)/\!/\tilde{U}\]
from a $\tilde{U}$-invariant open subset $(X \times \CC)^{ss,\tilde{U}}$ of $X \times \CC$ making $X/\!/\tilde{U}$ is a categorical quotient of $(X\times \CC)^{ss,\tilde{U}}$ with $\phi(x) = \phi(y)$ if and only if the closures of the $\tilde{U}$-orbits of $x$ and $y$ meet in $(X \times \CC)^{ss,\tilde{U}}$;
\item this morphism $\phi$ restricts to a geometric quotient
$X^{\hat{s},U} \to X^{\hat{s},U}/U$ for the action of $U$ on the $U$-invariant open subset $X^{\hat{s},U}$ of $X$.
\end{enumerate} 
\end{theorem}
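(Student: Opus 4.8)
The plan is to run the deduction of Theorem~\ref{thmbound} from Theorem~\ref{thm:geomcor} once more, but with the projective variety $X$ replaced throughout by $X\times\PP^1$ and with the instability input taken from Proposition~\ref{propbound2} in place of Proposition~\ref{propbound}. Concretely, I would apply Theorem~\ref{thm:geomcor} with $H=\tilde{U}$, reductive group $G=\SL(n)$, and---using the remark that the normal subgroup in that theorem need not be the full unipotent radical---the normal subgroup $U'=\hU\cap\SL(n)$, which is a finite extension of $U$ with reductive quotient $R=\tilde{U}/U'\cong\CC^*$. The inclusion $\tilde{U}\hookrightarrow\SL(n)$ is $U'$-faithful, and $G\times_{U'}(X\times\PP^1)\cong(\GL(n)/\hU)\times X\times\PP^1$ has the projective completion $\overline{\GL(n)/\hU}\times X\times\PP^1$ carrying the $\SL(n)\times\CC^*$-linearisation $L^{(p,q,r)}_\chi$ of Proposition~\ref{propbound2}.

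First I would verify the hypotheses of Theorem~\ref{thm:geomcor} for this data. The boundary of $G\times_{U'}(X\times\PP^1)$ inside the completion is the Grassmannian boundary $(\overline{\GL(n)/\hU}\setminus(\GL(n)/\hU))\times X\times\PP^1$, which by Corollary~\ref{corbound} lies in $(\PP(\mathcal{W}_{v_1})\cup\PP(\mathcal{W}_{\det}))\times X\times\PP^1$ and is therefore contained in the boundary of $\GL(n)\times_{\hU}(X\times\CC)$; Proposition~\ref{propbound2}, taken with $p\gg q>0$, $r\gg q$, $\chi/p$ well adapted and $p$ sufficiently divisible, shows this boundary is unstable for $\SL(n)\times\CC^*$. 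Theorem~\ref{thm:geomcor} then delivers parts (1) and (2) directly: finite generation of $\bigoplus_{k}H^0(X\times\PP^1,L^{\otimes kp})^{\tilde{U}}$ together with the identification of $(X\times\PP^1)/\!/\tilde{U}$ with $(\overline{\GL(n)/\hU}\times X\times\PP^1)/\!/(\SL(n)\times\CC^*)$ and with the $\mathrm{Proj}$ of the invariant algebra.

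For part (3) I would use the extra content of Proposition~\ref{propbound2} beyond Proposition~\ref{propbound}, namely that $\overline{\GL(n)/\hU}\times X\times\{\infty\}$ is also unstable. This forces the semistable locus $(\overline{\GL(n)/\hU}\times X\times\PP^1)^{ss,\SL(n)\times\CC^*}$ to lie inside $(\GL(n)/\hU)\times X\times\CC=\GL(n)\times_{\hU}(X\times\CC)$. Writing $(X\times\CC)^{ss,\tilde{U}}$ for the intersection of $X\times\CC$, embedded via $(x,c)\mapsto([\zdis_n],x,c)$, with this semistable locus, parts (3) and (4) of Theorem~\ref{thm:geomcor} then yield the surjective $\tilde{U}$-invariant morphism $\phi$ to $(X\times\PP^1)/\!/\tilde{U}$, the categorical quotient property, and the orbit-closure description of its fibres, all taking place within $X\times\CC$.

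Statement (4) is the genuinely new point, and is where I expect the real work to lie. By part (5) of Theorem~\ref{thm:geomcor} the restriction of $\phi$ to the $\tilde{U}$-stable locus is a geometric quotient by $\tilde{U}$, again occurring inside $X\times\CC$. I would then slice along the section $x\mapsto(x,[1:1])$ of $X\times\PP^1$ over $X$: since $\chi_0$ restricts to an isomorphism from the one-parameter subgroup $\CC^*\leq\tilde{U}$ onto $\CC^*$, this subgroup acts simply transitively on the orbit $\CC^*\subset\PP^1$ of $[1:1]$, and an element $\tilde{u}\in\tilde{U}$ fixes $[1:1]$ if and only if $\tilde{u}\in U=\ker\chi_0$. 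Hence the $\tilde{U}$-orbit of $(x,[1:1])$ meets the slice $X\times\{[1:1]\}$ in exactly $(Ux)\times\{[1:1]\}$, so restricting the $\tilde{U}$-geometric quotient to this slice converts $\tilde{U}$-orbits into $U$-orbits; in particular the finite part $U'/U$, which acts nontrivially on $\PP^1$, is automatically filtered out. This identifies $X^{\hat{s},U}$, defined through the stable slice as in the statement, as a $U$-invariant open subset of $X$ on which $\phi$ induces a geometric quotient $X^{\hat{s},U}\to X^{\hat{s},U}/U$. The main obstacle is the careful bookkeeping of this reduction: matching the $\SL(n)\times\CC^*$-stability used to define $X^{\hat{s},U}$ with the $\tilde{U}$-stability of the section, checking that the fibres of $\phi$ over the slice are precisely the $U$-orbits so that the induced quotient is genuinely geometric, and transporting the orbit-closure statement of (3) correctly across the slice.
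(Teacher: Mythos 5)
Your proposal is correct and follows essentially the same route as the paper, which simply reruns the argument for Theorem \ref{thmbound} (i.e.\ Theorem \ref{thm:geomcor} applied with normal subgroup $\hU\cap\SL(n)$ and reductive quotient $\CC^*$) with $X$ replaced by $X\times\PP^1$, using Proposition \ref{propbound2} to make both the Grassmannian boundary and $\overline{\GL(n)/\hU}\times X\times\{\infty\}$ unstable so that the semistable locus lands in $X\times\CC$. Your slice argument along $X\times\{[1:1]\}$ for part (4), using that the stabiliser of $[1:1]$ in $\tilde{U}$ is $U=\ker\chi_0$ and that the finite part of $\hU\cap\SL(n)$ is filtered out, supplies the detail the paper leaves implicit and is the intended reasoning.
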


\section{Some applications}

 Recall that if $U$ is { any} unipotent complex linear algebraic group of dimension $n-1$ which has
an action of $\CC^*$ with all weights strictly positive, then $U$ can be embedded in $\GL({\rm Lie}(U \rtimes \CC^*))$ via its adjoint action on the Lie algebra
${\rm Lie}({U}\rtimes \CC^*)$ as the unipotent radical of a subgroup $\hU$ of the form (\ref{label1}) 
 which is generated along the first row, and as the unipotent radical of the associated subgroup $\tilde{U}$ of $\SL(n)$ (which we are calling the adjoint form of $U$). Here the weights of the action of $\CC^*$ on the Lie algebra of $U$ are $\weight_j - 1$ for $j=2,\ldots, n$. We can apply Theorems \ref{thmbound} and \ref{thmbound2} to this situation, and
 also  in the situation of jet differentials considered in $\S$5. Here $U$ is the unipotent radical $\UU_n$ of the reparametrisation group $\GG_n$, and $\tilde{U}$ is the associated subgroup
 $\tilde{\GG}_n \cong \UU_n \rtimes \CC^*$ of $\SL(n)$ which is isomorphic to $\GG_n$ when $n$ is odd and is a double cover of $\GG_n$ when $n$ is even. 

We will finish this section by describing two examples of algebras of invariants in the case of jet differentials.  

\begin{ex}
 \textbf{Invariant jet differentials of order $2$ in dimension $2$.}
 When $n=2$ then 
 $\UU_2$ (which is the unipotent radical of the standard Borel subgroup of $\SL(2)$) is a Grosshans subgroup of $\SL(2)$. 
As usual let  $\{e_1,e_2\}$ be the standard basis for $\CC^2$, and
consider the group 
\[\GG_2 = \left\{ 
\left(\begin{array}{cc}
\a_1 & \a_2 \\
0 & \a_1^2 
\end{array} \right) : \a_1 \in \CC^*, \a_2\in \CC \right.\}=\CC^* \ltimes \CC^{+}\]
with maximal unipotent $\CC^+$ acting on $\CC^2$ by translation. Then $\symk=\CC^n \oplus \sym^2 \CC^2$ has an induced basis $\{e_1,e_2,e_1^2,e_1e_2,e_2^2\}$. Let $x_{ij}$ denote the standard coordinate functions on $\SL(2)\subset (\CC^2)^* \otimes \CC^2$. Then in the notation of $\S$4
\[\phi_1(x_{11},x_{12},x_{21},x_{22})=(x_{11},x_{21}),\]
and 
\[\phi_2(x_{11},x_{12},x_{21},x_{22})=(x_{11},x_{21})\wedge ((x_{12},x_{22})+ (x_{11}^2,2x_{11}x_{21},x_{21}^2)),\]
and $\calo(\SL(n))^U$ is generated by $x_{11},x_{21}$ and the $2 \times 2$ minors of 
\[\left(\begin{array}{ccccc}
x_{11} & x_{21} & 0 & 0 & 0 \\
x_{12} & x_{22} & x_{11}^2 & 2x_{11}x_{21} &
x_{21}^2
\end{array}\right). 
\]
Since the determinant is $1$ this set of generators reduces to two generators $x_{11},x_{21}$, as expected since
$\SL(2)/\CC^+ \cong \CC^2 \setminus \{ 0 \}$ and its canonical affine completion 
$\SL(2)/\!/\CC^+$ is $\CC^2$.

\end{ex}

\begin{ex} \textbf{Invariant jet differentials of order $3$ in dimension $3$.}  
When $n=3$ the finite generation of  the Demailly-Semple algebra $\calo((J_3)_x)^{\UU_3}$ was proved by Rousseau in \cite{rousseau}. 
Here 
\[\GG_3 = \left\{ 
\left(\begin{array}{ccc}
\a_1 & \a_2 & \a_3\\
0 & \a_1^2 & 2\a_1\a_2 \\
0 & 0 & \a_1^3
\end{array} \right) : \a_1 \in \CC^*, \a_2,\a_3 \in \CC \right.\}=\CC^* \ltimes U\]
while $\symk=\CC^n \oplus \sym^2 \CC^3 \oplus \sym^3 \CC^3$ has basis $\{e_1,e_2,e_3,e_1^2,e_1e_2,\ldots, e_3^3\}$. Let $x_{ij}$ denote the standard coordinate functions on $\SL(3)$. Then in the notation of $\S$4
\begin{multline}
\phi_3(x_{11},\ldots, x_{33})=(x_{11},x_{21},x_{31})\wedge ((x_{12},x_{22},x_{32}))+ (x_{11}^2,2x_{11}x_{21},x_{21}^2,2x_{21}x_{31},2x_{11}x_{31},x_{31}^2)\\
\wedge ((x_{12},x_{22},x_{32})+(2x_{11}x_{12},\ldots, 2x_{13}x_{23})+(x_{11}^3,\ldots, x_{31}^3))
\end{multline}
and $\calo(\SL(3))^U$ is generated by those minors of   
\[\left(\begin{array}{ccccccccccc}
x_{11} & x_{21} & x_{31} & 0 & 0 & \cdots & 0 & 0 & 0 & \cdots  & 0 \\
x_{12} & x_{22} & x_{32} & x_{11}^2 & 2x_{11}x_{21} & \cdots  & x_{33}^2 & 0 & 0 & \cdots & 0 \\
x_{13} & x_{23} & x_{33} & x_{11}x_{12} &
x_{11}x_{22}+x_{12}x_{21} & \cdots & x_{31}x_{32} & x_{11}^3 & x_{11}^2x_{21} & \cdots & x_{31}^3
\end{array}
\right)
\]
whose rows form an initial segment of $\{1,2,3\}$, that is the minors $\Delta_{\bi_1,\ldots \bi_s}$ with rows $1,\ldots, s$ and columns indexed by $\bi_1,\ldots, \bi_s$, where $s=1,2$ or $3$ and $|\bi_j|\le 3$.  
\end{ex}

\end{document}